\title[Minimal Lagrangian Surfaces in $\CH^2$]{Minimal Lagrangian Surfaces in
$\CH^2$ and Representations of Surface Groups into $SU(2,1)$}
\author{John Loftin}
\author{Ian McIntosh}
\address{Department of Mathematics and Computer Science\\ Rutgers-Newark\\
Newark, NJ 07102, USA}
\email{loftin@rutgers.edu}
\address{Department of Mathematics\\ University of York\\
York YO10 5DD, UK}
\email{ian.mcintosh@york.ac.uk}
\subjclass{20H10,53C43,58E20}
\date{\today}
\newcommand{\rp}{\mathbb{RP}}
\newcommand{\re}{\mathbb{R}}
\newcommand{\co}{\mathbb{C}}
\newcommand{\na}{\nabla}
\newcommand{\sfrac}[2]{{\textstyle \frac{#1}{#2}}}
\newcommand{\Z}{\mathbb{Z}}
\newcommand{\C}{\mathbb{C}}
\newcommand{\Ct}{\mathbb{C}^\times}
\newcommand{\R}{\mathbb{R}}
\renewcommand{\P}{\mathbb{P}}
\newcommand{\CP}{\mathbb{CP}}
\newcommand{\CH}{\mathbb{CH}}
\newcommand{\RH}{\mathbb{RH}}
\newcommand{\caD}{\mathcal{D}}
\newcommand{\caJ}{\mathcal{J}}
\newcommand{\caK}{\mathcal{K}}
\newcommand{\gl}{\mathfrak{gl}}
\newcommand{\fg}{\mathfrak{g}}
\newcommand{\fk}{\mathfrak{k}}
\newcommand{\fm}{\mathfrak{m}}
\newcommand{\fu}{\mathfrak{u}}
\newcommand{\Aut}{\operatorname{Aut}}
\newcommand{\Hom}{\operatorname{Hom}}
\newcommand{\Isom}{\operatorname{Isom}}
\renewcommand{\Re}{\operatorname{Re}}
\renewcommand{\Im}{\operatorname{Im}}
\newcommand{\Kah}{K\" ahler\ }
\newcommand{\Ad}{\operatorname{Ad}}
\newcommand{\ad}{\operatorname{ad}}
\newcommand{\tr}{\operatorname{tr}}
\newcommand{\la}{\langle}
\newcommand{\ra}{\rangle}
\newcommand{\g}[2]{\langle{#1},{#2}\rangle}
\newcommand{\II}{\mathrm{I\! I}}
\newcommand{\bz}{{\bar z}}
\newcommand{\diag}{\operatorname{diag}}
\newtheorem{thm}{Theorem}[section]
\newtheorem{prop}[thm]{Proposition}
\newtheorem{lem}[thm]{Lemma}
\newtheorem{cor}[thm]{Corollary}
\newtheorem{defn}[thm]{Definition}
\theoremstyle{remark}
\newtheorem{rem}{Remark}
\numberwithin{equation}{section}
\begin{document}
\begin{abstract}
We use an elliptic differential equation of \c{T}i\c{t}eica (or Toda) type to
construct a minimal
Lagrangian surface in $\CH^2$ from the data of a compact hyperbolic Riemann surface and a
cubic holomorphic differential. The minimal Lagrangian surface is equivariant
for an $SU(2,1)$ representation of the fundamental group. We use this data to
construct a diffeomorphism between a neighbourhood of the zero section in a
holomorphic vector bundle over Teichmuller space (whose fibres parameterise cubic
holomorphic differentials) and a neighborhood of the $\R$-Fuchsian representations
in the $SU(2,1)$ representation space. We show that all the
representations in this neighbourhood are complex-hyperbolic quasi-Fuchsian by
constructing for each a fundamental domain using an $SU(2,1)$ frame for the minimal
Lagrangian immersion: the Maurer-Cartan equation for this frame is the
\c{T}i\c{t}eica-type equation. A very similar equation to ours governs
minimal surfaces in hyperbolic 3-space, and our paper can be interpreted as an analog
of the theory of minimal surfaces in quasi-Fuchsian manifolds, as first studied by
Uhlenbeck.
\end{abstract}
\maketitle

\section{Introduction.}

The equation
\begin{equation}\label{eq:hyptoda}
\frac{\partial^2}{\partial z\partial\bar z}\log(s^2)=|Q|^2s^{-4}+s^2
\end{equation}
is satisfied by the conformally flat metric $2s^2|dz|^2$ of a minimal
Lagrangian surface in the complex hyperbolic plane $\CH^2$, where
$z=x+iy$ is a local conformal coordinate and $Q\,dz^3$ is a
holomorphic cubic differential. We can treat this equation either as
a local form or as an expression for the equations on the universal
cover of a compact surface $\Sigma$. In fact,  this equation is an
integrability condition: satisfying it is a
necessary condition for the existence of a minimal Lagrangian immersion of
$\Sigma$ into $\CH^2$.

There is also a coordinate invariant version. Fix a background
metric $h$ on a surface $\Sigma$. Then the universal cover of
$\Sigma$ admits a minimal Lagrangian immersion into $\CH^2$ with
metric $e^uh$ if it admits a smooth function $u:\Sigma\to\R$ and a
holomorphic cubic differential $U$ for which
\begin{equation}\label{eq:hypglobal}
\Delta_hu-16\|U\|_h^2e^{-2u} -2e^u -2\kappa_h=0
\end{equation}
where $\Delta_h,\kappa_h$ are respectively the Laplacian and
curvature, and $\|U\|_h$ is the norm on cubic differentials, all
with respect to $h$.

We prove the existence of global solutions to this equation (\ref{eq:hypglobal})
on any compact hyperbolic surface $\Sigma$ provided $U$ is sufficiently
small.  These equations are actually necessary and sufficient conditions for
the existence of a special frame $F:\tilde\Sigma\to SU(2,1)$ (called a
\emph{Legendrian frame})
for a minimal Lagrangian immersion $\varphi:\tilde\Sigma\to \CH^2$,
where $\tilde\Sigma$ denotes the universal cover if $\Sigma$. This frame
determines a flat $SU(2,1)$-bundle over $\Sigma$ whose holonomy
provides a representation of the fundamental group $\pi_1\Sigma$ into $SU(2,1)$
for which the map $\varphi$ is equivariant.

The latter part of the paper concerns properties of the representations we produce.
All of the representations we produce have zero Toledo invariant, as
they arise from Lagrangian surfaces  (the Toledo invariant characterises the
connected components of the representation space of surface groups into $SU(2,1)$
\cite{xia00}). When $U=0$, the
minimal Lagrangian surface is simply the canonical totally geodesic Lagrangian embedding
of $\RH^2$
 in $\CH^2$. The corresponding representation
takes values in $SO(2,1)\simeq PSL(2,\R)$ and is said to be $\R$-Fuchsian (it is a
Fuchsian representation). For $U$ small, we prove
the induced minimal Lagrangian surface is properly embedded into
$\CH^2$, and that the exponential map of the normal bundle of the
surface covers all of $\CH^2$. This allows us to construct a locally
finite fundamental domain for the $\pi_1\Sigma$ action on $\CH^2$,
simply by taking a bundle of Lagrangian planes normal to the immersion over a fundamental
domain on the minimal Lagrangian surface. As a consequence, each
representation we produce is \emph{complex-hyperbolic quasi-Fuchsian}, i.e.,
discrete, faithful, geometrically finite and totally loxodromic. To be precise, we prove the following
theorem (this is a restatement of Theorem \ref{main-thm} below).
\begin{thm} \label{main-thm-intro}
Let $S$ be a closed oriented surface of genus $g\ge2$. In the representation
space ${\rm Hom}(\pi_1S,SU(2,1))/SU(2,1)$ there is a neighborhood $\mathcal P$ of
the locus of $\re$-Fuchsian representations so that for all $\rho\in \mathcal P$,
\begin{itemize}
\item $\rho$ is complex-hyperbolic quasi-Fuchsian.
\item There is a natural identification of $\rho$ with a pair $(\Sigma,U)$
consisting of $\Sigma$ a marked conformal structure on $S$ and $U$ a small
holomorphic cubic differential on $\Sigma$. In particular, there is
submersion of $\mathcal P$ onto the Teichm\"uller space of $\re$-Fuchsian
representations, and a complex structure on $\mathcal P$.
\item There is a canonical $\rho$-invariant minimal Lagrangian embedding $\caD\subset\CH^2$
of the Poincar\'e disc, and an invariant normal projection of $\CH^2\to \caD$.
\end{itemize}
\end{thm}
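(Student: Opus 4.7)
\medskip

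\noindent\textbf{Proof plan.}
The three bullets of the theorem correspond to three distinct stages, which I plan to handle in order.

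\emph{Stage 1: Constructing $\rho$ from $(\Sigma,U)$.} Normalising $h$ to be the hyperbolic metric of curvature $-2$ in the conformal class of $\Sigma$, the constant function $u=0$ together with $U=0$ solves (\ref{eq:hypglobal}). The linearisation of the left-hand side at $(0,0)$ is $\Delta_h - 4$, a negative-definite elliptic operator, hence invertible on (for instance) $C^{2,\alpha}(\Sigma)$. The implicit function theorem thus yields, for every sufficiently small $U$ and every marked conformal structure $\Sigma$ near a basepoint, a unique small solution $u=u(\Sigma,U)$ depending smoothly (in fact, real-analytically) on the data. Integrating the Maurer-Cartan form of the Legendrian frame on $\tilde\Sigma$ produces the minimal Lagrangian immersion $\varphi_{(\Sigma,U)}$ and the equivariance representation $\rho_{(\Sigma,U)}$, giving a smooth map $\Psi\colon(\Sigma,U)\mapsto[\rho]$ from a neighborhood of the zero section of the bundle of cubic differentials over Teichm\"uller space to the character variety.

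\emph{Stage 2: Proving quasi-Fuchsian.} At the basepoint ($U=0$) the image of $\varphi$ is the canonical Lagrangian $\RH^2\subset\CH^2$, and $\CH^2$ is foliated by the totally geodesic Lagrangian $\RH^2$-planes normal to it; in particular the normal exponential map is a global diffeomorphism of the normal bundle onto $\CH^2$. For small $U$, the second fundamental form and the immersion itself vary smoothly with the data, so on any compact fundamental domain $D\subset\tilde\Sigma$ for the $\pi_1\Sigma$-action the normal exponential map remains a diffeomorphism onto a small perturbation of the $U=0$ image. Equivariance of both $\varphi$ and the normal bundle under $\rho$ then promotes this from $D$ to all of $\tilde\Sigma$; the key point is that the perturbative estimate is uniform because of equivariance. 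Once the normal exponential map is a global diffeomorphism $N\tilde\Sigma\to\CH^2$, the preimage of $D$ is a locally finite fundamental domain for $\rho(\pi_1\Sigma)$ acting on $\CH^2$, giving discreteness, faithfulness, and geometric finiteness at a stroke. Total loxodromy follows because a non-loxodromic element would fix a point of $\overline{\CH^2}$; together with the freeness of the action on $\CH^2$ inherited from the action on $\tilde\Sigma$, such a fixed point would obstruct the fundamental domain being locally finite near the boundary, contradicting the explicit description by normal planes.

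\emph{Stage 3: Parametrisation, submersion and complex structure.} Smoothness of $\Psi$ was noted in Stage 1. For injectivity near the $\R$-Fuchsian locus: $\rho$ determines the $\R$-Fuchsian element it perturbs (and hence $\Sigma$) via its projection to Teichm\"uller space, and the minimal Lagrangian $\rho$-equivariant embedding is unique in a neighborhood of the canonical one (by uniqueness of solutions to (\ref{eq:hypglobal}) in Stage 1 combined with an inverse function argument on the space of equivariant immersions); from the immersion one reads off $U$ as its cubic holomorphic differential. A dimension count at $\R$-Fuchsian points (both sides have real dimension $16g-16$) then shows $\Psi$ is a local diffeomorphism, hence an open identification of $\mathcal P$ with a neighbourhood of the zero section. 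Pushing forward the complex structure on the bundle of cubic differentials gives the complex structure on $\mathcal P$, and projection to Teichm\"uller space provides the claimed submersion. The $\rho$-invariant Lagrangian disc $\mathcal D$ is $\varphi(\tilde\Sigma)$ and the invariant normal projection is the inverse of the normal exponential map followed by the bundle projection $N\tilde\Sigma\to\tilde\Sigma$.

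\emph{Main obstacle.} The genuinely non-trivial step is the global injectivity and surjectivity of the normal exponential map in Stage 2; the local diffeomorphism property is a routine perturbation, but passing from ``diffeomorphism over a compact fundamental domain'' to ``diffeomorphism onto all of $\CH^2$'' requires the equivariance together with control near the ideal boundary. Everything else is either PDE analysis (Stage 1) or follows mechanically once the normal foliation is established (Stages 2b and 3).
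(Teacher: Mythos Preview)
Your overall architecture matches the paper's, and Stage~1 is fine (the paper uses sub/super-solutions to get the explicit range $\|U\|_h^2\le 1/54$, but your implicit function theorem suffices for a neighbourhood). Stages~2 and~3, however, each contain a real gap.

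In Stage~2 the perturbation-plus-equivariance argument does not close. Equivariance of $\varphi$ is with respect to the \emph{perturbed} representation $\rho$, not $\rho_0$: for long words $\gamma\in\pi_1\Sigma$ the isometries $\rho(\gamma)$ and $\rho_0(\gamma)$ are not close, so $\varphi$ is not uniformly $C^0$-close to $\varphi_0$ on $\tilde\Sigma$, and the translated slabs $\rho(\gamma)\,\Phi(D\times\caD)$ cannot be compared to the unperturbed tiling. You flag ``control near the ideal boundary'' as the obstacle but give no mechanism, and none is available by soft perturbation alone. The paper proceeds differently: an explicit Jacobian computation (Theorem~\ref{immersion-criterion}) shows $\Phi$ is an immersion whenever $\|U\|_m\le\sqrt2$, and a separate hard estimate (Proposition~\ref{proper}, resting on Propositions~\ref{ode-infinity} and~\ref{bound-u-z}) proves the Legendrian lift $f:\tilde\Sigma\to S_-$ is \emph{proper}, by writing $F^{-1}F_t$ along each geodesic ray as a small additive perturbation of a fixed hyperbolic matrix and extracting exponential growth of $\|f\|$. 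A proper local diffeomorphism from a simply connected domain is then automatically a global diffeomorphism. This properness is the substantive step and is genuinely non-perturbative in your sense.

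In Stage~3, smooth injectivity plus a dimension count does not give a local diffeomorphism (consider $x\mapsto x^3$); you must show the \emph{differential} $d\chi$ is an isomorphism. Your injectivity argument is also incomplete: there is no a priori projection from the representation space to Teichm\"uller space, and the uniqueness from Stage~1 is uniqueness for fixed $(\Sigma,U)$, not uniqueness of an equivariant minimal Lagrangian for fixed $\rho$. The paper instead establishes that $d\chi$ is an isomorphism at each $(\Sigma,0)$ by pulling back Goldman's symplectic form: on the splitting of $T_{(\Sigma,0)}\mathcal K$ into Beltrami and cubic-differential directions it shows the two blocks are $\omega$-orthogonal, computes $\tr(\delta_U\alpha\wedge\delta_{iU}\alpha)$ directly to see the cubic block is nondegenerate, and identifies the Beltrami block with the Weil--Petersson form via a result of Shimura.
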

There are several aspects to this construction which we consider to be valuable
and deserve further study.

First, it gives a holomorphic parameterisation for an open set of complex-hyperbolic quasi-Fuchsian
representations (a neighbourhood of the locus of all $\R$-Fuchsian representations) as a neighbourhood
of the zero section in a holomorphic vector bundle over Teichm\" uller space of rank $5g-5$.
It was already known from the work of Guichard \cite{guichard04} and Parker-Platis
\cite{parker-platis06} that the $\R$-Fuchsian locus possesses an open neighbourhood of complex-hyperbolic
quasi-Fuchsian representations (in fact, Guichard's result says that the space of
complex hyperbolic quasi-Fuchsian representations is open\footnote{
Misha Kapovich has also informed us that this result
was known earlier, and follows from the techniques used to address the case
of $SO(n,1)$; see e.g.\ Izeki \cite{izeki00}. }) but at this point in time not much is known
about how big this set is within the Toledo invariant zero component.
The fundamental domains we produce are in the end  similar to those of
Parker-Platis, as both consist of unions of Lagrangian planes, but with our data we get some
measure, through the norm of $U$, of how far we are away from the $\R$-Fuchsian locus.
Moreover, this parameterisation has an intriguing interpretation in terms of the Yang-Mills-Higgs
bundle description of representation space (see Remark \ref{rem:Higgs} below) which could help
explain how far this parameterisation can extend.

Second, our approach is analogous to the study of minimal surfaces in quasi-Fuchsian
hyperbolic $3$-manifolds initiated by Uhlenbeck in \cite{uhlenbeck83} (and continued in
\cite{taubes04,krasnov-schlenker07,wang09,huang-wang10}).
Indeed, one of the main goals of the study of surface group representations into $SU(2,1)$
is to find the extent to which the theory of quasi-Fuchsian representations
of surface groups extends to the complex hyperbolic case (for a recent survey, see
\cite{parker-platis10}).
The conformal factor of a minimal surface in $\mathbb{RH}^3$ solves
an equation analogous to \eqref{eq:hypglobal},
\begin{equation}\label{eq:quasi-fuchsian}
\Delta_hu-16\|V\|_h^2e^{-u} -2e^u -2\kappa_h=0,
\end{equation}
where $V$ a holomorphic quadratic differential.
It is known that for quasi-Fuchsian representations near enough to Fuchsian (called
almost Fuchsian) there is a unique invariant minimal surface in $\mathbb{RH}^3$.
On the other hand, there are quasi-Fuchsian representations for which
there are many minimal surfaces. Presumably, the complex-hyperbolic
representations we produce here are analogous to the almost Fuchsian case.
The solutions to \eqref{eq:hypglobal} we use are what we call
\emph{small} solutions (which means, when $\kappa_h=-1$, that the metric $g=e^uh$
has curvature bounds $-3/2\leq\kappa_g\leq -1$).
Provided $U$ is sufficiently small, \eqref{eq:hypglobal} has exactly one small solution
determined by $(\Sigma,h,U)$.
But it is possible that there are complex-hyperbolic quasi-Fuchsian
representations  far enough away from $\re$-Fuchsian to admit
multiple invariant minimal Lagrangian surfaces.



It is also worth noting that equation \eqref{eq:hypglobal} is one of several formally similar
equations which arise from surface geometries corresponding to different
real forms of $SL(3,\C)$, most of which have attracted attention in the
recent literature. These are all variations on the theme of \emph{\c{T}i\c{t}eica's
equation},
\[
u_{xy} + e^{-2u} -e^u=0.
\]
This hyperbolic equation corresponds to
nonconvex proper affine spheres in $\re^3$, and the symmetry group is
$SL(3,\R)$. The techniques of
integrating surfaces given solutions to equations of this type
originated with \c{T}i\c{t}eica's papers
\cite{tzitzeica08,tzitzeica09}. The more modern variants, which are distinct from
\eqref{eq:hypglobal}, are
\begin{eqnarray}\label{eq:CP2}
\Delta_hu-16\|U\|_h^2e^{-2u} + 2e^u -2\kappa_h&=&0,\\ \label{eq:hyp-aff-sph}
\Delta_hu+16\|U\|_h^2e^{-2u} - 2e^u -2\kappa_h&=&0,\\ \label{eq:ell-aff-sph}
\Delta_hu+16\|U\|_h^2e^{-2u} + 2e^u -2\kappa_h&=&0.
\end{eqnarray}
In each case $U$ is a cubic holomorphic differential and $e^uh$ is a metric for,
respectively: a minimal Lagrangian surface
in $\CP^2$ \eqref{eq:CP2}, where the isometry group is $SU(3)$ (see, for example,
\cite{mcintosh03,haskins-kapouleas07});
hyperbolic \eqref{eq:hyp-aff-sph} and elliptic \eqref{eq:ell-aff-sph} affine spheres in $\re^3$,
where the symmetry group is $SL(3,\R)$
(see \cite{wang91,simon-wang93}).
The latter two equations were also recently studied in order
to construct solutions to the
Monge-Amp\`ere equation $\det(\partial^2 u/\partial x^i\partial x^j)  =1$
on affine manifolds diffeomorphic to $\re^3$ minus the ``Y'' vertex of a graph
\cite{lyz05,lyz-erratum}. Equation (\ref{eq:hyp-aff-sph}) can also be
used to parameterise the Hitchin component of the representation
space of surface groups into $SL(3,\re)$
\cite{labourie97,labourie07,loftin01}.

Given Theorem \ref{main-thm-intro}, the next challenge is to
understand all the representations which can be obtained from
equivariant minimal Lagrangian immersions of the Poincar\' e disc.
All will have zero Toledo  invariant and therefore lie in the
same connected component of representation space. This will require
a greater understanding of the
solutions to equation (\ref{eq:hypglobal}) in the case where
$\|U\|_h$ is not ``small.''  Schoen-Wolfson's theory of mean
curvature flow of Lagrangian submanfolds in K\"ahler-Einstein
surfaces \cite{schoen-wolfson01} might be useful here.  Analogous
theories of surfaces which realise representations have been worked
out for some of the equations mentioned above.  For example, in the
case of equation (\ref{eq:quasi-fuchsian}) each quasi-Fuchsian
hyperbolic manifold admits at least one immersed minimal surface
(see Uhlenbeck \cite{uhlenbeck83}).  Cheng-Yau provide a similar
theory for equation (\ref{eq:hyp-aff-sph}) by showing that each
nondegenerate convex cone in $\re^3$ contains a hyperbolic affine
sphere invariant under any unimodular affine automorphisms of the
cone \cite{cheng-yau77,cheng-yau86}.

We would both like to thank John Parker for inspiring discussions. In addition, the first author would like to thank Bill Goldman, Misha Kapovich,
 and Anna Wienhard for stimulating conversations.

\medskip\noindent
\textbf{Notation.} For $u,w\in\C^n$ we use $u\cdot v$ to denote the standard (complex bilinear) dot
product, and set $\|u\|=\sqrt{u\cdot\bar u}$. We use $e_1,\ldots,e_n$ to denote the standard basis for
$\C^n$. For any non-zero $u\in\C^{n+1}$ we use $[u]\in\CP^n$ to denote the complex line it generates.

\section{Complex hyperbolic geometry.}

\subsection{Complex hyperbolic $n$-space.} Recall that complex hyperbolic $n$-space is the
complex manifold
\[
\CH^n = \{w=(w_1,\ldots,w_n)\in\C^n:\|w\|^2<1\}
\]
equipped with the Hermitian metric $\sum_{j,k=1}^nh_{j\bar k}dw_j\otimes d\bar w_k$ with components
\begin{equation}\label{eq:h}
h_{j\bar k} = \frac{1}{1-\|w\|^2}(\delta_{jk}+\frac{\bar w_jw_k}{1-\|w\|^2}).
\end{equation}
and \Kah form
\[
\omega = \frac{i}{2}\partial\bar\partial\log(1-\|w\|^2).
\]
With this metric $\CH^n$ has constant holomorphic sectional curvature $-4$.
We can embed $\CH^n$ in $\CP^n$ by
\begin{equation}\label{eq:embed}
\CH^n\to\CP^n;\quad w\mapsto [w,1]=[w_1,\ldots,w_n,1].
\end{equation}
Let $\la\ ,\ \ra$ denote the indefinite Hermitian form on $\C^{n+1}$ given by
\[
\la u,v\ra = u_1\bar v_1+\ldots +u_n\bar v_n - u_{n+1}\bar v_{n+1}.
\]
We see that \eqref{eq:embed} identifies $\CH^n$, as a manifold, with $\P W_-$, the space
of complex lines in $W_{-} = \{u\in\C^{n+1}:\la u,u\ra <0\}$.

Let $\pi:L\to\P W_-$ denote the tautological line bundle. We note that $W_-$ can be
identified with $L$ with its zero section removed.
Using the standard identification $T_zW_-\simeq\C^{n+1}$ we obtain a splitting
\[
TW_- = \mathcal V + \mathcal H
\]
where $\mathcal V=\ker(d\pi)$ and the horizontal subspace at $z$ is
\begin{equation}\label{horiz}
\mathcal H_z=\{u\in\C^{n+1}:\la u,z\ra = 0\}.
\end{equation}
On $\mathcal H$ the form $\la\ ,\ \ra$ is positive definite.
Further, this splitting is invariant for the $\Ct$ action along
fibres of $\pi$, since these fibres are the $\Ct$ orbits, and the
metric on $\mathcal H$ is also invariant. Therefore we obtain an
identification of $T\P W_-$ with $\pi_*H$, by assigning to each
tangent vector its horizontal lift. This equips $\P W_-$ with a \Kah
structure whose Levi-Civita connexion is the horizontal projection
of flat differentiation in $\pi_*\mathcal H\subset \P
W_-\times\C^{n+1}$. One can easily show that the \Kah structure $\P
W_-$ inherits from $\CH^n$ agrees with that obtained from $\pi_*
\mathcal H$, hence $\pi$ is a pseudo-Riemannian submersion.

\subsection{$\CH^n$ as a symmetric space.} As a symmetric space, $\CH^n$ is the non-compact
dual to $\CP^n$.  Since we will make use of
this for deriving the equations let us summarise the relevant facts.
The Lie group $G=U(n,1)$ of isometries for $\la\ ,\ \ra$
acts transitively on the pseudo-sphere
\[
S_-=\{v\in\C^{n+1}:\la v,v\ra=-1\} \subset W_-,
\]
and we will consider $S_-$ to be the $G$-orbit of $e_{n+1}$.
This action descends to a transitive action of $U(n,1)$ on $\CH^n$ by holomorphic isometries.
The isotropy group $K$ for this action is isomorphic to
$U(n)\times S^1$, and $\CH^n\simeq G/K$ as manifolds.

Let $q$ be the diagonal matrix $\mathrm{diag}(1,\ldots,1,-1)$ which represents the form $\la\ ,\ \ra$
(i.e., $\bar v^tqu = \la u,v\ra$). Define a involution $\sigma\in\Aut(\gl(n+1,\C))$ by
$\sigma(A) = qAq$. Then the Lie algebra of $G$ is
\[
\fg=\fu(n,1) = \{A\in\gl(n+1,\C): \sigma(A)=-\bar A^t\}.
\]
This involution $\sigma$ restricts to $\fg$ and provides the symmetric space decomposition
$\fg = \fk + \fm$
into $\pm 1$ eigenspaces of $\sigma$, with $\fk\simeq \fu(n)\times i\R$ and
$\fm\simeq \C^n$, the latter via
\[
\C^n\to\fm;\quad u\mapsto \begin{pmatrix} O_n & u\\ \bar u^t & 0 \end{pmatrix}
\]
where $O_n$ in the $n\times n$ zero matrix.

The coset map $G\to G/K$ is a principal
$K$-bundle. Using the right adjoint action of $K$ on $\fm$ we obtain
as associated vector bundle $[\fm]=G\times_K\fm$ which can be
identified with $T(G/K)$ (see, for example,
\cite[p6]{burstall-rawnsley90}).
The Hermitian metric on $G/K$ is obtained from the (unique up to scale)
$\Ad K$-invariant inner product on $\fm$ and
the map $G/K\to \P W_-;$ $gK\mapsto [ge_{n+1}]$ provides an isomorphism of \Kah
manifolds, $G/K\simeq\CH^n$.

\section{Lagrangian immersions in $\CH^2$.}

On $\C^3$, let $\g{\ }{\ }$, $W_-$ and $S_-$ be as above. For $u\in
S_-$, the tangent space
\[
T_uS_- =\{v\in \C^3:\Re\g{v}{u}=0\}
\]
contains the horizontal subspace $\mathcal H_u = \{v\in
T_uS_-:\g{v}{u}=0\}$. The form $\g{\ }{\ }$ is a Hermitian inner
product on $\mathcal H_u$ with real and imaginary components
\[
\g{\ }{\ } = g(\ ,\ ) + i\omega(\ ,\ ).
\]
These provide the Riemannian metric and the symplectic structure on
the horizontal bundle. The map $S_-\to \CH^2$ is a horizontal
isometry of Hermitian structures.

For $\caD$ the Poincar\' e disc, let $\varphi:\caD\to\CH^2$ be a Lagrangian immersion. It lifts
horizontally to a map $f:\caD\to S_-$ which is Legendrian (note the curvature of $L$ is proportional to $\omega$, and so its restriction vanishes for a Lagrangian immersion). This has a natural
$U(2,1)$-frame for $f$, which we describe below. We will show that the Maurer-Cartan equations for
this frame depend only on the induced metric, the mean curvature and a cubic differential.
In section \ref{sec:minimal} we will that see that when $\varphi$ is minimal the cubic
differential is holomorphic,
when $\varphi$ is also equivariant for a representation of a surface group into $PU(2,1)$,
the metric and cubic differential live on the quotient surface.

Using the conformal parameter $z$ on $\caD$ we can characterise $f$ as
Legendrian by the equations
\begin{equation}\label{eq:Leg}
\g{f_z}{f}=0=\g{f_\bz}{f}.
\end{equation}
A priori this only seems to force $f$ to be horizontal, but by
differentiating the first equation with respect to $\bz$, and the
second with respect to $z$, we find $\g{f_z}{f_z} =
\g{f_{\bz}}{f_{\bar z}}$, which implies the Legendrian condition
$\omega(f_x,f_y)=0$. Since $\g{\ }{\ }$ is positive definite on the
horizontal subspace we can write $|f_z| = \sqrt{\g{f_z}{f_z}} =
|f_{\bz}|$.

We will also assume that $\varphi$ is conformal, and hence $f$ is
horizontally conformal, i.e.,
\begin{equation}\label{eq:conf}
\g{f_z}{f_{\bz}}=0.
\end{equation}
Thus we obtain a global frame $F:\caD\to U(2,1)$ with columns
\begin{equation}\label{eq:Legframe}
F = \begin{pmatrix}f_1 & f_2 & f_3\end{pmatrix},\quad f_1 =
\frac{1}{|f_z|}f_z,\  f_2 = \frac{1}{|f_{\bz}|}f_{\bz},\ f_3=f.
\end{equation}
Now set $\alpha = F^{-1}dF$. We want to calculate the Maurer-Cartan
equations
\begin{equation}\label{eq:MC}
d\alpha + \alpha\wedge\alpha =0.
\end{equation}
Let $A = F^{-1}F_z$ and $B=F^{-1}F_{\bz}$, then
\begin{equation}\label{eq:MCform}
\alpha = Adz+Bd\bz
\end{equation}
This is a $\fu(2,1)$-valued $1$-form, i.e, $q\alpha q =
-\bar\alpha^t$, where $q=\diag(1,1,-1)$. It follows that $B = -q\bar
A^tq$ and the entries of the matrix $qA$ are $\g{(f_j)_z}{f_k}$ for
the $j^{th}$ column and $k^{th}$ row.

Set $s=|f_z|=|f_{\bz}|$ so that metric induced on $\caD$ by $\varphi$
is $2s^2|dz|^2$. We calculate
\begin{equation}\label{eq:Fz}
(f_1)_z  =  s^{-1}(f_{zz}-s_zf_1),\quad (f_2)_z  =
s^{-1}(f_{z\bz}- s_zf_2),\quad (f_3)_z  =  f_z.
\end{equation}
Now it is useful to have an expression for the second fundamental
form and the mean curvature of $\varphi$. Write $z=x+iy$ and define
an orthonormal basis for $T\caD\subset f^{-1}\mathcal H$ by
\[
E_1 = \frac{f_x}{|f_x|},\quad E_2 = \frac{f_y}{|f_y|}.
\]
Then $iE_1,iE_2$ span the normal bundle $T\caD^\perp$. Since
$S_-\to\CH^2$ is a horizontal isometry, the second fundamental form
of $\varphi$ is given by
\[
\II(X,Y) = \sum_{j=1}^2 g(XYf,iE_j)iE_j,\quad X,Y\in\Gamma(T\caD).
\]
(This follows from the fact mentioned above that the Levi-Civita
connection on $\CH^2$ is the projection of the flat connection on
$W_-\subset \C^3$.) Therefore, the mean curvature is
\[
H = \frac{1}{2} \sum_{j=1}^2
g(E_1^2f+E_2^2f,iE_j)iE_j=\frac{1}{2}(E_1^2f+E_2^2f)^\perp.
\]
Now $|f_x|^2 = |f_y|^2=2s^2$ and it is simple to check that
\[
g(f_{xx}+f_{yy},f_x)=0 = g(f_{xx}+f_{yy},f_y)
\]
and therefore
\[
(E_1^2f+E_2^2f)^\perp = \frac{2}{s^2}(f_{z\bz}+\g{f_{z\bz}}{f}f),
\]
taking care to observe that $\g{f}{f}=-1$. But
$\g{f_{z\bz}}{f}=-|f_z|^2$, therefore
\[
H = \frac{1}{s^2}f_{z\bz}-f = \frac{1}{2}\Delta_gf-f,
\]
where we have abused notation by letting $g$ stand for $\varphi^*g$
as well.

Using \eqref{eq:Fz} we write the quantities $\g{(f_j)_z}{f_k}$ as
they would appear in the matrix $qA$.
\begin{equation}\label{eq:qA}
\begin{array}{lll}
\g{f_{zz}}{f_z}s^{-2}-s_zs^{-1} & \g{f_{z\bz}}{f_z}s^{-2}  & s \\
\g{f_{zz}}{f_{\bz}}s^{-2} & \g{f_{z\bz}}{f_{\bz}}s^{-2}-s_zs^{-1} & 0\\
\g{f_{zz}}{f}s^{-1} & \g{f_{z\bz}}{f}s^{-1} & 0
\end{array}
\end{equation}
Many of these terms simplify. We have
\[
\g{f_{zz}}{f_z}s^{-2} = (\g{f_z}{f_z}_z - \g{f_z}{f_{z\bz}})s^{-2} =
2s_zs^{-1}-\g{f_z}{H}.
\]
Together, $\g{f_z}{f}=0$ and $\g{f_z}{f_{\bz}}=0$ imply
$\g{f_{zz}}{f}=0$, and therefore
$\g{f_{zz}}{f_{\bz}}=-\g{f_{zzz}}{f}$. Those identities also show
that for $Q=\g{f_{zzz}}{f}$ the quantity $Q\,dz^3$ is a cubic
differential.

Finally, $\g{f_{z\bz}}{f_{\bz}}=s^2\g{H}{f_{\bz}}$, and therefore
we deduce that
\begin{equation}\label{eq:A}
A = \begin{pmatrix}
s_zs^{-1} - \g{f_z}{H} & \g{H}{f_z} & s\\
-Qs^{-2} & \g{H}{f_{\bz}} -s_zs^{-1} & 0\\
0& s&0
\end{pmatrix}
\end{equation}
It follows that
\begin{equation}\label{eq:B}
B=\begin{pmatrix}
-s_{\bz}s^{-1} + \g{H}{f_z} & {\bar Q}s^{-2} & 0\\
-\g{f_z}{H} & -\g{f_{\bz}}{H} +s_\bz s^{-1} & s\\
s & 0& 0
\end{pmatrix}
\end{equation}
At this point the following observation is useful.
\begin{lem}
Let $\sigma_H = \omega(H,df)$, which is the mean curvature $1$-form
for $\varphi$ (pulled back by the lift $f$). Then
$i\sigma_H=\g{H}{df}= \tfrac{1}{2}\tr\alpha$.
\end{lem}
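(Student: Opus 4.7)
The plan is to verify both equalities directly, one from the geometric decomposition $\g{\ }{\ } = g + i\omega$ and the other from the explicit matrices $A,B$ of \eqref{eq:A} and \eqref{eq:B}.

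For the first equality, fix any real tangent vector $X$ on $\caD$. Then $df(X) = Xf$ is a real tangent vector to $f(\caD) \subset S_-$, and since $H$ is the mean curvature vector it is $g$-orthogonal to $Tf(\caD)$. Hence
\[
\g{H}{df(X)} = g(H,df(X)) + i\omega(H,df(X)) = i\omega(H,df(X)) = i\sigma_H(X).
\]
Taking $X = \partial_x,\partial_y$ in particular shows that $\g{H}{f_x}$ and $\g{H}{f_y}$ are purely imaginary.

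For the second equality, I compute $\tr\alpha = (\tr A)\,dz + (\tr B)\,d\bz$ from the matrices. The $(3,3)$ entries of both $A$ and $B$ are zero, and on summing the first two diagonal entries the $\pm s_zs^{-1}$ (respectively $\pm s_\bz s^{-1}$) cancel, leaving
\[
\tr A = \g{H}{f_\bz} - \g{f_z}{H}, \qquad \tr B = \g{H}{f_z} - \g{f_\bz}{H}.
\]
Now, since $\g{H}{f_x},\g{H}{f_y}\in i\R$, a short calculation using $f_z=\tfrac{1}{2}(f_x-if_y)$ and antilinearity in the second slot gives the reflection identity
\[
\overline{\g{H}{f_z}} = -\g{H}{f_\bz}, \qquad \overline{\g{H}{f_\bz}} = -\g{H}{f_z},
\]
i.e.\ $\g{f_z}{H} = -\g{H}{f_\bz}$ and $\g{f_\bz}{H} = -\g{H}{f_z}$. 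Substituting,
\[
\tfrac{1}{2}\tr\alpha = \g{H}{f_\bz}\,dz + \g{H}{f_z}\,d\bz.
\]
The same expression arises on expanding $\g{H}{df} = \g{H}{f_x}\,dx + \g{H}{f_y}\,dy$ in the $dz,d\bz$ basis, completing the chain of equalities.

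The computation is essentially mechanical: the only ingredient beyond the formulas for $A$ and $B$ (and the cancellations those formulas were designed to produce) is the reflection identity above, which follows immediately from the normality of $H$. I do not foresee a genuine obstacle.
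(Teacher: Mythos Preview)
Your proof is correct and follows essentially the same route as the paper: both establish $i\sigma_H=\g{H}{df}$ from the normality of $H$, read off $\tr A$ and $\tr B$ from the explicit matrices, and then use the identity $\g{f_z}{H}=-\g{H}{f_\bz}$ (which you call the reflection identity, and which the paper obtains by the equivalent observation that $\g{H}{f_x},\g{H}{f_y}\in i\R$) to collapse $\tfrac{1}{2}\tr\alpha$ to $\g{H}{f_\bz}\,dz+\g{H}{f_z}\,d\bz=\g{H}{df}$.
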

\begin{proof}
First observe that since $g(H,df)=0$, we clearly have $i\sigma_H =
\g{H}{df}$. Now observe that
\begin{eqnarray*}
\g{H}{f_\bz}& =&  \tfrac{1}{2}\g{H}{f_x+if_y}\\
& = & \tfrac{1}{2}\g{H}{f_x} -\tfrac{i}{2}\g{H}{f_y}\\
& = & \tfrac{i}{2}\omega(H,f_x) +\tfrac{1}{2}\omega(H,f_y)
\end{eqnarray*}
and therefore $\g{H}{f_z} = -\g{f_\bz}{H}$ (and, equally,
$\g{f_z}{H}=-\g{H}{f_\bz}$). Now
\begin{eqnarray*}
\tr\alpha& = & (\g{H}{f_\bz} - \g{f_z}{H})dz + (\g{H}{f_z}-\g{f_\bz}{H})d\bz\\
&=& 2\g{H}{f_\bz d\bz + f_z dz}.
\end{eqnarray*}
\end{proof}
It follows that the frame $F$ (up to a constant scaling) satisfies $\det(F)=1$ if and only if
$\varphi$ is a minimal immersion, i.e., $H=0$. Otherwise to make the
frame an $SU(2,1)$ frame we must divide $F$ by $\det(F)^{1/3}$,
which is the cube root of the Lagrangian angle function $\exp(2i\int
\sigma_H):\caD\to S^1$. Notice that $d\tr\alpha=0$ (which follows from
the Maurer-Cartan equations) implies that $\sigma_H$ is a closed
$1$-form: this is to be expected since $\CH^2$ is a K\"ahler-Einstein manifold.

Now that we have the form of $\alpha$, we can state the
Maurer-Cartan equations.
\begin{prop}
The form $\alpha$ satisfies the Maurer-Cartan equations if and only
if all three of the following equations hold
\begin{eqnarray}\label{eq:3MC}
\frac{\partial^2}{\partial z\partial\bz}\log(s^2)& = &s^2+|Q|^2s^{-4} -\frac{s^2}{2}|H|^2\\
d\sigma_H&=&0\\
\label{eq:Qbz}
Q_\bz s^{-4}& = &-(\g{H}{f_\bz}s^{-2})_z.
\end{eqnarray}
\end{prop}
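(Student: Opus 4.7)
The Maurer-Cartan equation $d\alpha+\alpha\wedge\alpha=0$ for $\alpha=A\,dz+B\,d\bz$ is equivalent to the single matrix identity $A_\bz-B_z=[A,B]$. My plan is to compute both sides of this $3\times 3$ identity entry by entry and show that its content is captured precisely by the three listed scalar equations, with the remaining entries either algebraic tautologies or conjugate-equivalent to the others.

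First, I would take the trace. Since $\tr[A,B]=0$, the trace of the matrix identity reads $\partial_\bz\tr A-\partial_z\tr B=0$, i.e.\ $d(\tr\alpha)=0$. By the preceding lemma this is exactly $d\sigma_H=0$, yielding the second of the three equations.

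Second, I would examine the $(2,1)$ entry. Setting $p=\g{f_z}{H}$, so that $A_{21}=-Qs^{-2}$ and $B_{21}=-p$, the left side expands to $-Q_\bz s^{-2}+2Qs_\bz s^{-3}+p_z$, while a short commutator computation gives $[A,B]_{21}=2Qs_\bz s^{-3}+2ps_z s^{-1}$. Equating and rearranging yields $Q_\bz s^{-2}=p_z-2ps_zs^{-1}$, which is exactly equation \eqref{eq:Qbz} (after multiplying by $s^{-2}$ and using $\g{H}{f_\bz}=-p$). The $(1,2)$ entry is its complex conjugate and adds nothing new.

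Third, I would examine the $(1,1)$ entry. The left side telescopes to $(\log s^2)_{z\bz}-(p_\bz+\bar p_z)$, and once the trace equation is imposed the parenthesized term vanishes. For the commutator's $(1,1)$ entry I use the identity $\g{f_z}{H}=-\g{H}{f_\bz}$ from the lemma together with the geometric fact $|p|^2=\tfrac{s^2}{2}|H|^2$, which follows from $H$ being normal to the Lagrangian surface and $|f_x|^2=|f_y|^2=2s^2$; after cancellations $[A,B]_{11}$ simplifies to $s^2+|Q|^2 s^{-4}-\tfrac{s^2}{2}|H|^2$. This reproduces the metric equation \eqref{eq:3MC}.

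Finally, I would verify that every remaining entry of $A_\bz-B_z-[A,B]$ vanishes automatically: the $(3,3)$ entry reduces to $0=0$; the $(1,3),(3,1),(2,3),(3,2)$ entries reduce to tautologies like $s_\bz=s_\bz$ and $-s_z=-s_z$; the $(2,2)$ entry yields the same equation as $(1,1)$; and $(1,2)$ is the conjugate of $(2,1)$. These redundancies reflect the constraint $B=-q\bar A^t q$ that encodes the $\fu(2,1)$-structure of $\alpha$. The main obstacle is purely the arithmetic bookkeeping in the commutator $[A,B]$; the one non-algebraic input is the geometric identity $|\g{f_z}{H}|^2=\tfrac{s^2}{2}|H|^2$, which makes the mean-curvature term recognizable in the reduced $(1,1)$ equation.
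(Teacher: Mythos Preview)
Your proposal is correct and follows precisely the approach implicit in the paper: the proposition is stated there without proof, with only the auxiliary identity $|\langle H,f_z\rangle|^2=\tfrac{s^2}{2}|H|^2$ recorded afterward, so the intended argument is exactly the entry-by-entry expansion of $A_\bz-B_z=[A,B]$ that you carry out. One small clarification: the $(2,2)$ entry is not literally identical to the $(1,1)$ entry---it differs by the sign of $(p_\bz+\bar p_z)$---but once the trace equation $d\sigma_H=0$ is imposed (as you do first), both reduce to the same metric equation, so your conclusion stands.
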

Here we have used
\[
\g{H}{f_z}= \frac{1}{2}(\omega(H,f_x)-i\omega(H,f_y)) =
\frac{s}{\sqrt{2}}(g(H,iE_1)-ig(H,iE_2)),
\]
to deduce that
\[
|\g{H}{f_z}|^2 = \frac{s^2}{2}|H|^2.
\]
Note that, via the isomorphism $T\CH^2\simeq G\times_K\fm$, the differential $d\varphi$ corresponds
to $\Ad F\cdot\alpha_{\fm}$ and the first equation in \eqref{eq:3MC} is essentially the Gauss equation
for $\varphi$.

\section{Minimal Lagrangian immersions in $\CH^2$.}\label{sec:minimal}
For a minimal Lagrangian surface $H=0$, and so
for the Maurer-Cartan form $\alpha = A\,dz + B\,d\bz$,
\begin{equation}\label{eq:AB}
A = \begin{pmatrix}
s^{-1}s_z & 0 & s\\ -Qs^{-2}& -s^{-1}s_z & 0 \\ 0 & s & 0
\end{pmatrix},\quad
B =
\begin{pmatrix}
-s^{-1}s_{\bar z} & \bar Qs^{-2} & 0 \\ 0 & s^{-1}s_{\bar z} & s\\s & 0&0
\end{pmatrix}
\end{equation}
Moreover, the integrability conditions (\ref{eq:3MC}) and
(\ref{eq:Qbz}) become \eqref{eq:hyptoda} and $Q_{\bz}=0$, so that
$Q$ is a holomorphic cubic differential.

Now consider the global theory on a Riemann surface. Let
$(\Sigma,h)$ be a closed Riemann surface of genus at least $2$ with metric $h$.
Fix a uniformisation $\caD\to\Sigma$, and express the metric $h$
over $\caD$ as $\gamma|dz|^2$.

Suppose $\varphi:\caD\to\CH^2$ is minimal Lagrangian and
$\bar\rho$-equivariant for a representation $\bar\rho:\pi_1\Sigma\to PSU(2,1)$.
The circle bundle $S_-\to\CH^2$
is the unit subbundle of the tautological bundle $L$ and as a bundle with connexion
$L^3\simeq K_{\CH^2}$ (viewing
$\CH^2\subset\CP^2$). Let $f$ be a global horizontal
section of the flat $S^1$-bundle $\varphi^{-1}S_-$.
For the same reasons as the case of $\CP^2$ \cite{hunter-mcintosh10} the
mean curvature $1$-form $\sigma_H$ is the connexion
$1$-form for this flat connexion on $\varphi^{-1}K_{\CH^2}$. Since $\varphi$
is minimal this connexion has trivial holonomy, and so the
holonomy group for the contact structure connexion on
$\varphi^{-1}S_-$ is either trivial or $\Z_3$. Hence $f$ is equivariant for a representation
$\rho:\pi_1\Sigma\to SU(2,1)$ which lies over $\bar\rho$.
It induces a metric $|df|^2=e^uh=2s^2|dz|^2$ and a
cubic holomorphic differential $U=Qdz^3=\g{f_{zzz}}{f}dz^3$ on $\caD$ which are both
$\rho$-invariant. According to the previous section, they satisfy
\[
\frac{\partial^2}{\partial z\partial\bar z}\log(\gamma) +
\frac{\partial^2u}{\partial z\partial\bar z}
 = \tfrac12 e^u\gamma +4|Q|^2e^{-2u}\gamma^{-2}.
\]
The Laplacian and curvature with respect to $h$ are given by
\[
\Delta_h = \frac{4}{\gamma}\frac{\partial^2}{\partial z\partial\bar z},\quad
\kappa_h = -\frac{2}{\gamma}\frac{\partial^2}{\partial z\partial\bar z}\log(\gamma),
\]
so that the equation becomes
\[
\Delta_hu-2\kappa_h = 2e^u+16\frac{|Q|^2}{\gamma^3}e^{-2u}.
\]
But in these coordinates $\|U\|_h^2 = |Q|^2/\gamma^3$ and therefore
we obtain \eqref{eq:hypglobal}.

Conversely, suppose we have a triple
$(\Sigma,U,u)$, where $u:\Sigma\to\R$ is a global solution of \eqref{eq:hypglobal}.
Let $\alpha$ be the Maurer-Cartan form over $\caD$ given by \eqref{eq:AB}.
Fix any base point $z_0\in\caD$ and let
$F$ be the unique $SU(2,1)$ frame for which $F^{-1}dF=\alpha$ and $F(z_0)=I$. It is easy to see that a
holomorphic change of coordinates $w(z)$ results in the change of frame
\begin{equation}\label{eq:change_of_frame}
F \mapsto F c_{zw},\quad
c_{zw} =
\begin{pmatrix}
z_w/|z_w|&0&0\\0&\overline{z_w}/|z_w|&0\\ 0&0&1
\end{pmatrix}.
\end{equation}
The quantities $z_w/|z_w|$ and $\overline{z_w}/|z_w|$ are, respectively, the transition functions
for the unit circle subbundle in $T^{1,0}\Sigma$ and and its inverse. Hence
$\alpha$ determines a principal $SU(2,1)$-bundle $P\to\Sigma$ equipped with a flat connexion $\theta$
whose expression in the local frame $F$ is the Maurer-Cartan form $\alpha$ above. The holonomy of this flat
connexion determines a representation $\rho$ up to conjugacy in $SU(2,1)$, hence we obtain an well-defined
element of $\Hom(\pi_1\Sigma,SU(2,1))/SU(2,1)$. Moreover, by \eqref{eq:change_of_frame} the last
column of $F$
is independent of the local coordinate and determines a $\rho$-equivariant map $f:\caD\to S_-$, which
is minimal Legendrian and the horizontal lift of a $\rho$-equivariant minimal Lagrangian map
$\varphi:\caD\to\CH^2$ with induced metric $e^uh$ and cubic holomorphic differential $U$.

In summary, we have proven the following theorem. As above, we assume we have fixed a
uniformisation $\caD\to\Sigma$ and holomorphic coordinate $z$ on $\caD$.
\begin{thm}\label{thm:Legframe}
Let $(\Sigma,h)$ be a compact Riemannian surface of genus at least $2$
and $U$ a globally holomorphic cubic differential on $\Sigma$
for which there exists a solution $u:\Sigma\to\R$ to \eqref{eq:hypglobal}.
Let $\alpha$ be given by \eqref{eq:AB}, with $e^uh = 2s^2|dz|^2$ and $U=Qdz^3$. Then
we obtain a minimal Lagrangian immersion
$\varphi:\caD\to\CH^2$ by integration of the equations
$F^{-1}dF=\alpha$, $F:\caD\to SU(2,1)$.
The map $\varphi$ is uniquely determined, up to isometries of
$\CH^2$, by the data $(\Sigma,e^uh,U)$, and is equivariant with
respect to a holonomy representation $\rho:\pi_1\Sigma\to SU(2,1)$
lying in the conjugacy class corresponding to the flat $SU(2,1)$-bundle
$(\Sigma,P,\theta)$ determined by $\alpha$.

Conversely, a minimal Lagrangian immersion $\varphi: \caD\to\CH^2$
equivariant with respect to a representation $\bar\rho:\pi_1\Sigma\to PSU(2,1)$
determines a metric $e^uh$ and a holomorphic cubic differential
$U$ on $\Sigma$ which satisfy \eqref{eq:hypglobal}. Up to conjugacy,
the representation $\rho$ this data determines lies over $\bar\rho$.
\end{thm}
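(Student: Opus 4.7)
My approach would be to verify both directions of the theorem by assembling the formulas and proposition from Section 3; the forward direction carries the bulk of the work, in particular a globality argument. For the forward direction, given $(\Sigma,h,U,u)$, I would work locally on $\caD$, writing $s^2|dz|^2 = e^u h$ and $U = Q\,dz^3$, and forming $\alpha = A\,dz + B\,d\bz$ via (\ref{eq:AB}). One checks $\alpha$ takes values in $\su(2,1)$: the diagonal of $A+B$ sums to zero (reflecting $H=0$) and $B = -q\bar A^t q$ by construction. By the proposition preceding Section \ref{sec:minimal} applied with $H=0$, the flatness $d\alpha+\alpha\wedge\alpha=0$ reduces to the pair $Q_\bz = 0$ and (\ref{eq:hyptoda}); the former holds because $U$ is holomorphic, and the latter is (\ref{eq:hypglobal}) rewritten in local coordinates, exactly as displayed just above the theorem statement. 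Integrating $F^{-1}dF = \alpha$ with $F(z_0)=I$ yields $F:\caD \to SU(2,1)$, and I would set $f = Fe_3$, $\varphi = [f]$. Reading off the columns recovers induced metric $e^u h$ and cubic $U$, and minimality is automatic from $\tr\alpha = 0$ via the lemma on $\sigma_H$.

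For globality and equivariance, I would verify by direct computation that under a holomorphic change $w = w(z)$ the quantities $s$ and $Q$ transform so that $\alpha$ undergoes the gauge change $\alpha \mapsto c_{zw}^{-1}\alpha\, c_{zw} + c_{zw}^{-1}d c_{zw}$ corresponding to (\ref{eq:change_of_frame}) — the diagonal entries of $c_{zw}$ being the transition functions of the unit subbundles of $T^{1,0}\Sigma$ and $T^{0,1}\Sigma$. This patches $\alpha$ to a flat $SU(2,1)$-connexion $\theta$ on a principal bundle $P \to \Sigma$ whose monodromy defines $\rho:\pi_1\Sigma\to SU(2,1)$ up to conjugacy. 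Since $c_{zw}e_3 = e_3$, the map $f = Fe_3$ is coordinate-independent and hence $\rho$-equivariant as a map $\caD \to S_-$, so $\varphi$ is $\rho$-equivariant. Uniqueness up to isometries of $\CH^2$ follows because any two solutions of $F^{-1}dF = \alpha$ differ by left multiplication by a constant element of $SU(2,1)$, which acts on $\CH^2$ as an isometry.

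For the converse, given the $\bar\rho$-equivariant minimal Lagrangian $\varphi$, I would use the lemma to identify $\sigma_H$ with the connexion $1$-form of the circle bundle $\varphi^{-1}S_- \to \caD$; vanishing of $H$ makes this connexion trivially flat on $\caD$, hence a global horizontal Legendrian lift $f:\caD \to S_-$ exists, equivariant for a representation $\rho$ over $\bar\rho$ with only a $\Z_3$ ambiguity coming from $L^3 \simeq K_{\CH^2}$. Assembling the frame $F$ from $(f, f_z, f_\bz)$ via (\ref{eq:Legframe}) and applying the proposition, the Maurer-Cartan equations reduce (with $H = 0$) to $U$ holomorphic and to (\ref{eq:hypglobal}) for the function $u$ determined by $e^u h = s^2|dz|^2$.

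The main obstacle in this plan is the globality step: verifying the precise coordinate-change identity that makes $\alpha$ transform by the gauge $c_{zw}$ of (\ref{eq:change_of_frame}), so that $\alpha$ descends from $\caD$ to data on the bundle $P \to \Sigma$, together with careful tracking of the $\Z_3$ lift from $PSU(2,1)$ to $SU(2,1)$ in the converse direction. Both hinge on the identification $L^3 \simeq K_{\CH^2}$ and on the fact that $\sigma_H$ is the connexion form of the circle bundle, and once these are in place the rest is a matter of matching objects against the formulas already derived in Sections 2 and 3.
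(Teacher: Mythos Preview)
Your proposal is correct and follows essentially the same route as the paper: both directions assemble the results of Section~3 (the form of $\alpha$ in \eqref{eq:AB}, the Maurer--Cartan proposition with $H=0$, and the lemma identifying $\sigma_H$ with $\tfrac12\tr\alpha$), use the coordinate-change identity \eqref{eq:change_of_frame} to globalise $\alpha$ to a flat $SU(2,1)$-connexion on $\Sigma$ whose holonomy gives $\rho$, and in the converse invoke $L^3\simeq K_{\CH^2}$ together with $\sigma_H=0$ to lift $\varphi$ horizontally with at worst a $\Z_3$ ambiguity. Your identification of the globality/gauge step as the crux matches the paper exactly.
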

\begin{rem}\label{rem:Higgs}
The flat bundle $(\Sigma,P,\theta)$ has a corresponding Yang-Mills-Higgs description in
the sense of Corlette \cite{corlette88}. For we can split $\alpha$ into $\alpha_\fk+\alpha_\fm$ according
to the reductive decomposition $\fg=\fk+\fm$. Since the transition functions \eqref{eq:change_of_frame}
lie in the isotropy subgroup $K$ there is a corresponding splitting of $\ad P$ into
$\ad K$-invariant subbundles $V_\fk+V_\fm$ and $\theta$ determines a $K$-connexion
$D$ on each subbundle together with a section $\Psi$ of $V_\fm$. In our local frame $D=d+\ad\alpha_\fk$
and $\Psi=\alpha_\fm$. These satisfy the
Yang-Mills-Higgs equations, which assert that $D+\ad\Psi$ is flat and $D^*\Psi=0$.
This data can also be treated as holomorphic Higgs bundle data using the type
decomposition of $1$-forms and connexions (see, for example, Xia \cite{xia00}). From our
point of view $D$ is the Levi-Civita connexion on $\varphi^{-1}T\CH^2$ and $\Psi$ is the
differential $d\varphi$. Now recall that according to
Corlette for \emph{any} choice of conformal structure on $\Sigma$ every reductive representation
in $\Hom(\pi_1\Sigma,SU(2,1))/SU(2,1)$ corresponds to Yang-Mills-Higgs data. Indeed, what Corlette proves is that for
every choice of conformal structure on the smooth surface $\Sigma$ and every reductive representation
$\rho:\pi_1\Sigma\to
SU(2,1)$ there is a $\rho$-equivariant harmonic map $\varphi:\caD\to\CH^2$. The data we have satisfies
the extra conditions corresponding to $\varphi$ being conformal harmonic and Lagrangian: in terms of
$(D,\Psi)$ conformality is the condition that $\g{\Psi^{1,0}}{\Psi^{0,1}}=0$ while $\varphi$
is Lagrangian when $\Im\g{\Psi}{\Psi}=0$. Thus the conformal Lagrangian conditions tie the conformal
structure of $\Sigma$ to the data $(D,\Psi)$ and impose conditions on $\Psi$ which
could be thought of as putting it into a normal form.
\end{rem}



\section{Global solutions.}\label{sec:global}

In this section, we find global solutions to (\ref{eq:hypglobal}) on
a compact hyperbolic surface, by using similar techniques to those
developed in \cite{lyz05,lyz-erratum}.

\begin{thm} \label{exist-sol}
Let $\Sigma$ be a compact Riemann surface equipped with a metric $h$
of constant Gaussian curvature $-1$.  If $U$ is a nonzero
holomorphic cubic differential on $\Sigma$ which satisfies
$$\max_\Sigma \|U\|_h^2 \le \frac 1{54}$$
for $\|\cdot\|_h$ the metric on cubic differentials determined by
$h$, then there is a smooth solution $u$ to the equation of
\c{T}i\c{t}eica type
\begin{equation} \label{tz-eq}
\Delta u - 16\|U\|_h^2 e^{-2u} - 2 e^u + 2 = 0
\end{equation}
on $\Sigma$, where $\Delta$ is Laplacian with respect to $h$.
\end{thm}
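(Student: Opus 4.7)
The equation is semilinear elliptic on a closed surface without boundary, so the natural approach is the method of sub- and super-solutions (monotone iteration), mirroring the strategy of \cite{lyz05,lyz-erratum} for the cousin equation \eqref{eq:hyp-aff-sph}. Rewriting \eqref{tz-eq} as $\Delta u = G(x,u)$ with
\[
G(x,u) = 4\|U\|_h^2 e^{-2u} + 4 e^u - 2,
\]
it suffices to exhibit constants $u_-\le u_+$ satisfying $\Delta u_+ \le G(x,u_+)$ and $\Delta u_-\ge G(x,u_-)$ pointwise; the classical sub/super-solution theorem then yields a smooth solution in $[u_-,u_+]$.

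For a constant $c$, the left-hand side of \eqref{tz-eq} equals $\Phi(c;x):=-4\|U\|_h^2(x)\,e^{-2c}-4e^c+2$. The choice $u_+=-\log 2$ gives $\Phi=-16\|U\|_h^2\le 0$ unconditionally, while $u_-=-\log 3$ gives $\Phi=\tfrac{2}{3}-36\|U\|_h^2$, which is $\ge 0$ everywhere precisely under the hypothesis $\|U\|_h^2\le \tfrac{1}{54}$. Both the constant $-\log 3$ and the threshold $\tfrac{1}{54}$ are sharp for constant sub-solutions: minimising $g(c)=e^c+\|U\|_h^2 e^{-2c}$ over $c$ yields $g_{\min}=3\cdot 2^{-2/3}\|U\|_h^{2/3}$ attained at $c=\tfrac{1}{3}\log(2\|U\|_h^2)$, and demanding $g_{\min}\le \tfrac{1}{2}$ recovers exactly $\|U\|_h^2\le \tfrac{1}{54}$, with equality at $c=-\log 3$. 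Note $-\log 3<-\log 2$, so $u_-\le u_+$ as required.

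With these barriers in hand, the monotone iteration is standard. Pick $K\ge 2$, which dominates $G_u(x,u)=-8\|U\|_h^2 e^{-2u}+4e^u$ on $[-\log 3,-\log 2]$ (so that $G(x,u)+Ku$ is nondecreasing on this interval); then $\Delta-K$ is strictly negative definite on the closed surface $\Sigma$, hence invertible, and its inverse has the sign property $(\Delta-K)v\ge 0\Rightarrow v\le 0$ by the maximum principle. Setting $u_0=u_+$ and iterating $(\Delta-K)u_{n+1}=G(x,u_n)-Ku_n$, a direct calculation using the super- and sub-solution inequalities shows $\{u_n\}$ is monotonically decreasing and bounded below by $u_-$, so it converges in $L^\infty$ to a weak solution, which elliptic bootstrapping upgrades to a smooth classical solution. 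The only genuine content in the argument is the algebraic identification of the constants $-\log 2$, $-\log 3$ and the sharp threshold $1/54$; everything else is a textbook application of semilinear elliptic theory on a compact manifold, and the closedness of $\Sigma$ sidesteps any analysis at infinity or variational compactness.
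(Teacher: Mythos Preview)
Your proof is correct and follows essentially the same sub/super-solution strategy as the paper: the super-solution $-\log 2$ is identical, and your sub-solution $-\log 3$ is simply the value of the paper's $C_{\max}=\tfrac{1}{3}\log(2M)$ at the extremal $M=1/54$ (either constant works, and your fixed choice is arguably cleaner). One small slip: the condition $K\ge G_u$ makes $G(x,u)-Ku$ non-increasing (equivalently $-G+Ku$ nondecreasing), not $G+Ku$ nondecreasing---but this does not affect the argument, since the correct inequality $K\ge 2\ge \sup G_u$ is what you actually verify and use.
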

\begin{proof}
Let $H(u) =\Delta u - 16\|U\|_h^2 e^{-2u} - 2 e^u + 2$. The existence
of a smooth solution to $H(u)=0$ follows if we can construct sub-
and super-solutions $s,S$ on $\Sigma$ satisfying
$$ s\le S, \qquad H(s)\ge0, \qquad H(S) \le0 $$
(see e.g.\ Schoen-Yau \cite{schoen-yau94}, Proposition V.1.1). Such
a solution satisfies $S\ge u \ge s$.
Let $S = 0$. Then clearly $H(S) = -16\|U\|_h^2e^{-2S}\le0$.

Similarly, let $s=C$ for $C$ a negative number, and let $M =
\max_\Sigma  \|U\|_h^2$. Then compute $$H(s)  = - 16 \|U\|_h^2
e^{-2C} - 2 e^{C} + 2 \ge -16M e^{-2C} - 2e^{C} + 2.$$  Consider the
function $f(C) = -16M e^{-2C} - 2e^{C} + 2$ for $M>0$.  Then
$f(0)<0$ and $f\to-\infty$ as $C\to-\infty$.  The only critical
point of $f$ occurs at
$$C=C_{\max} =  \frac13 \log (16M).$$  Compute $$f(C_{\max}) = -
6\cdot2^{\frac13}\cdot M^{\frac13} + 2 \ge 0 \qquad
\Longleftrightarrow \qquad M \le \frac 1{54}.$$ So if $M$ satisfies
this bound, $C_{\max}\le \log(2/3)<0$ and $f(C_{\max})\ge0$, which
shows that $s=C_{\max}$ is a subsolution.
\end{proof}
To obtain a corresponding uniqueness for these solutions, we must introduce the following constraint.
\begin{defn}\label{U-small-bound}
On a hyperbolic Riemann surface $(\Sigma,h)$ equipped with a
holomorphic cubic differential $U$, we call a function $u$
\emph{small} when it provides the bound $16\|U\|_h^2e^{-3u}\leq 1$, in other
words, when
\begin{equation}\label{small}
u \ge \frac13\log(16\max_\Sigma\|U\|_h^2).
\end{equation}
\end{defn}
\begin{rem}\label{small-curvature}
The geometric significance of this constraint, and the reason for the somewhat
counter-intuitive use of the word \emph{small} to describe a function bounded \emph{below},
is that $u$ is a small solution of \eqref{tz-eq} if and only if the metric $g=e^uh$ has
\emph{small curvature}
\[
-3/2\leq \kappa_g\leq -1.
\]
The upper bound is true for any solution to \eqref{tz-eq}: it is the lower bound which
equates to \eqref{small}.
\end{rem}
We will mainly be
interested in small solutions to (\ref{tz-eq}).
Let
\[\theta(x,u)=-16\|U\|^2_he^{-2u} - 2e^u +2,
\]
so that (\ref{tz-eq}) becomes $\Delta u +\theta(x,u)=0$.  A simple computation implies
\begin{lem} \label{theta-dec}
For a small function $v$,  $$\frac{\partial \theta}{\partial
u}(x,v)\le0.$$
\end{lem}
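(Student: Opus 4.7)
The plan is to reduce the inequality $\partial\theta/\partial u(x,v)\le 0$ to a pointwise algebraic statement that is exactly the definition of smallness, so the lemma will follow by direct calculation.

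First I would just differentiate the explicit expression $\theta(x,u)=-4\|U\|_h^2 e^{-2u}-4e^u+2$ in $u$. This gives
\[
\frac{\partial\theta}{\partial u}(x,u)=8\|U\|_h^2 e^{-2u}-4e^u,
\]
with $\|U\|_h$ evaluated at the base point $x$ and treated as a constant in the $u$-variable. The sign of this quantity is governed solely by the factor $2\|U\|_h^2 e^{-3u}-1$, since $8\|U\|_h^2 e^{-2u}-4e^u=4e^u\bigl(2\|U\|_h^2 e^{-3u}-1\bigr)$ and $4e^u>0$.

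Next I would invoke Definition \ref{U-small-bound}: a function $v$ is small iff $v(x)\ge \tfrac13\log\bigl(2\max_\Sigma\|U\|_h^2\bigr)$ for all $x$. Exponentiating and multiplying through by $2\|U\|_h^2(x)\ge 0$ gives
\[
2\|U\|_h^2(x)\,e^{-3v(x)}\;\le\;\frac{2\|U\|_h^2(x)}{2\max_\Sigma\|U\|_h^2}\;\le\;1
\]
pointwise on $\Sigma$. Substituting back into the factored expression for $\partial\theta/\partial u$ yields $\partial\theta/\partial u(x,v(x))\le 0$, as required.

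There is essentially no obstacle here; the only thing to be careful about is that the definition of smallness in \eqref{small} is stated in terms of the global maximum of $\|U\|_h^2$, while the monotonicity estimate is pointwise, so the pointwise inequality $\|U\|_h^2(x)\le\max_\Sigma\|U\|_h^2$ is the only content beyond direct differentiation.
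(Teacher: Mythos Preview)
Your proof is correct and is precisely the ``simple computation'' the paper alludes to without writing out: differentiate $\theta$ in $u$, factor out $4e^u$, and observe that the remaining factor $2\|U\|_h^2 e^{-3u}-1$ is nonpositive by the definition of smallness.
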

\begin{prop} \label{unique-sol}
Given $(\Sigma,h,U)$, there is at most one small solution $u$ to
(\ref{tz-eq}).  The solutions produced by Theorem \ref{exist-sol}
above are small.
\end{prop}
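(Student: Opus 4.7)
The plan is to prove the two assertions separately, both of which follow from straightforward maximum-principle arguments built on Lemma~\ref{theta-dec}.

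\medskip

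\noindent\textbf{Uniqueness.} Suppose $u_1$ and $u_2$ are two small solutions of \eqref{tz-eq}. Subtracting the two equations gives
\[
\Delta(u_1-u_2) + \theta(x,u_1) - \theta(x,u_2) = 0.
\]
By the mean value theorem, for each $x\in\Sigma$ there is some value $v(x)$ lying between $u_1(x)$ and $u_2(x)$ with
\[
\theta(x,u_1) - \theta(x,u_2) = \tfrac{\partial\theta}{\partial u}(x,v(x))\,(u_1-u_2).
\]
The key observation is that $v(x)$ is itself a small function: the smallness condition \eqref{small} is a uniform lower bound depending only on $\max_\Sigma\|U\|_h^2$, so both $u_1$ and $u_2$, and hence any value between them, satisfy it. Lemma~\ref{theta-dec} then gives $a(x):=\tfrac{\partial\theta}{\partial u}(x,v(x))\le 0$, and we conclude
\[
\Delta(u_1-u_2) + a(x)(u_1-u_2) = 0
\]
on the compact manifold $\Sigma$. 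At an interior maximum of $u_1-u_2$ we have $\Delta(u_1-u_2)\le0$, so $a(x)(u_1-u_2)\ge0$; since $a\le0$ this forces $(u_1-u_2)(x_{\max})\le0$, and hence $u_1\le u_2$ everywhere. Exchanging the roles of $u_1$ and $u_2$ gives equality.

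\medskip

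\noindent\textbf{Smallness of the solution from Theorem~\ref{exist-sol}.} This is essentially cost-free from the construction. In the proof of Theorem~\ref{exist-sol} the sub-solution was taken to be the constant
\[
s = C_{\max} = \tfrac13\log(2M), \qquad M=\max_\Sigma\|U\|_h^2,
\]
and the sub/super-solution method yields a solution $u$ with $u\ge s = \tfrac13\log(2M)$. This is exactly the smallness bound \eqref{small}, so the constructed $u$ is small.

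\medskip

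\noindent\textbf{Main obstacle.} There is no serious obstacle here: once one notices that the smallness condition is preserved by pointwise convex combinations (so that $v(x)$ inherits smallness from $u_1,u_2$), Lemma~\ref{theta-dec} gives the sign needed to apply the maximum principle, and the uniqueness is immediate. The only thing to verify carefully is that the sub-solution chosen in Theorem~\ref{exist-sol} coincides with the threshold in Definition~\ref{U-small-bound}, which is a direct calculation.
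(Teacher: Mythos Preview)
Your approach is essentially the paper's: linearize the difference of the two equations, use Lemma~\ref{theta-dec} to get a nonpositive zeroth-order coefficient, and apply a maximum principle; the paper writes the coefficient as the integral $c(x)=\int_0^1\partial_u\theta(x,tu_1+(1-t)u_2)\,dt$ rather than your mean-value form, but that is cosmetic. The second assertion is handled identically to the paper, which simply says it is ``evident from the proof'' of Theorem~\ref{exist-sol}.

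There is, however, one logical step you should tighten. From $a(x_{\max})\,(u_1-u_2)(x_{\max})\ge 0$ and $a(x_{\max})\le 0$ you \emph{cannot} conclude $(u_1-u_2)(x_{\max})\le 0$ unless $a(x_{\max})<0$ strictly; if $a(x_{\max})=0$ the inequality is vacuous. This is exactly why the paper invokes the \emph{strong} maximum principle (Gilbarg--Trudinger, Theorem~3.5): for $L=\Delta+c$ with $c\le 0$ on a closed manifold it yields the dichotomy ``$u_1-u_2$ is constant or has no nonnegative maximum'', and the constant case is then disposed of separately. You can also rescue your pointwise argument directly: if $(u_1-u_2)(x_{\max})>0$ then $u_1(x_{\max})>u_2(x_{\max})$, so the mean-value point $v(x_{\max})$ lies in the \emph{open} interval between them, hence strictly above $\tfrac13\log(2M)$, which forces $2\|U\|_h^2(x_{\max})e^{-3v(x_{\max})}<1$ and therefore $a(x_{\max})<0$. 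Either way, the missing sentence is short, but as written the inference ``$a\le 0$ forces $(u_1-u_2)(x_{\max})\le 0$'' is not valid.
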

\begin{proof}
Suppose that $v,w$ are two small solutions of (\ref{tz-eq}).
We will use the
comparison principle and Lemma \ref{theta-dec} show that $v=w$.
By assumption, $\Delta v+\theta(x,v)=0$, $\Delta w
+\theta(x,w)=0$. Consider the path of functions $u_t=tv+(1-t)w$.
Then a standard computation shows $v-w$ satisfies
$$
\Delta(v-w) =
-\left(\int_0^1 \frac{\partial \theta}{\partial u}(x,u_t)\,dt
\right)(v-w).
$$
Therefore, $v-w$ satisfies the linear elliptic equation $L(v-w)=0$,
for
 \begin{equation}\label{def-L}
 L\phi = \Delta\phi + \left(\int_0^1\frac{\partial \theta}{\partial
u}(x,u_t)\,dt\right)\phi\equiv \Delta\phi+c\phi.
 \end{equation}
Since $v$ and $w$ are both small, $u_t$ is small for all
$t\in[0,1]$, and Lemma \ref{theta-dec} shows $c\le0$. At this point,
the strong maximum principle (Theorem 3.5 in
\cite{gilbarg-trudinger}) applies, and we may conclude that $v-w$ is
either constant on $\Sigma$ or has no nonnegative maximum. If $v-w$
is constant, it is easy to show the constant must be 0. Therefore,
we may conclude $v-w$ has no positive maximum---i.e., $v-w\le0$ on
$\Sigma$. By symmetry, $w-v\le0$ on $\Sigma$ also, and so we must
have $v=w$.

That the solutions produced in Theorem \ref{exist-sol} are small is
evident from the proof.
\end{proof}
\begin{lem} \label{improve-small}
Assume $M=\max_\Sigma\|U\|_h^2\le\frac1{16}$.  Then any small
solution $u$ to (\ref{tz-eq}) satisfies
$$\chi_M \le u \le 0,$$ where $\chi_M$ is the largest real root
of $f(C) = -16M e^{-2C} - 2e^{C} + 2$ if $M\le\frac1{54}$ and
$\chi_M=\log(16M)/3$ if $\frac1{54}< M \le \frac1{16}$.
\end{lem}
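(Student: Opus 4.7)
The plan is to apply the maximum principle at extrema of $u$, exploiting only the sign of $\Delta u$ and the definition of $M$.

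For the upper bound, I would evaluate the equation at a point $x_0$ where $u$ attains its maximum. There $\Delta u(x_0)\le0$, so the equation rearranges to
\[
4e^{u(x_0)} + 4\|U\|_h^2(x_0)e^{-2u(x_0)} \le 2.
\]
Dropping the nonnegative $\|U\|$-term yields $e^{u(x_0)}\le 1/2$, hence $u\le-\log 2$ on all of $\Sigma$. This step is unconditional and does not use smallness.

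For the lower bound, I would work at a minimum point $x_1$ of $u$, where $\Delta u(x_1)\ge 0$. Using $\|U\|_h^2(x_1)\le M$, the equation gives
\[
f(u_{\min}):=-4Me^{-2u_{\min}} -4e^{u_{\min}} +2 \le 0.
\]
The function $f$ from the proof of Theorem \ref{exist-sol} has its unique critical point at $C_{\max}=\tfrac13\log(2M)$, and smallness of $u$ (in the uniform form \eqref{small}) is precisely $u_{\min}\ge C_{\max}$. I now split into the two cases stated in the lemma. When $M\le 1/54$, the computation from Theorem \ref{exist-sol} shows $f(C_{\max})\ge 0$, so $f$ has real roots $\chi_M^-\le C_{\max}\le \chi_M^+=\chi_M$, with $f>0$ strictly between them (when $M<1/54$). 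Then $u_{\min}\ge C_{\max}$ together with $f(u_{\min})\le 0$ forces $u_{\min}\ge \chi_M$, since values in $[C_{\max},\chi_M)$ would give $f(u_{\min})>0$. When $1/54<M\le 1/16$, $f$ has no real roots and $f<0$ everywhere, so $f(u_{\min})\le 0$ is automatic; the lower bound $u_{\min}\ge \tfrac13\log(2M)=\chi_M$ comes directly from smallness.

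There is essentially no obstacle here beyond bookkeeping; the only thing to verify is that the hypothesis $M\le 1/16$ is the correct compatibility condition, i.e.\ that $\chi_M\le-\log 2$ under this bound. In the second case this reduces to $\tfrac13\log(2M)\le-\log 2$, equivalent to $M\le 1/16$; in the first case, $\chi_M\le-\log 2$ follows from the fact that $-\log 2$ is itself a supersolution, as shown in Theorem \ref{exist-sol}, so $f(-\log 2)\le 0$ places $-\log 2$ outside $(\chi_M^-,\chi_M^+)$, and since $-\log 2>C_{\max}$ under $M\le 1/16$, it must lie to the right of $\chi_M^+=\chi_M$.
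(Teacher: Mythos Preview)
Your argument is correct, and it is genuinely different from the paper's. The paper proves the upper bound by treating $w=-\log 2$ as a comparison function: it forms the homotopy $u_t=tu+(1-t)w$, linearises to obtain the operator $L\phi=\Delta\phi+c\phi$ with $c=\int_0^1\partial_u\theta(x,u_t)\,dt\le 0$ (this requires both $u$ and $w$ to be small, whence the hypothesis $M\le\tfrac{1}{16}$), and then applies the strong maximum principle to $L(u-w)\ge 0$. The lower bound is handled the same way with $\chi_M$ in place of $w$. Your approach bypasses this machinery entirely: you simply evaluate the equation at the maximum and minimum of $u$ and read off the bounds from the sign of $\Delta u$ together with the shape of $f$. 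In particular, your upper bound $u\le-\log 2$ holds with no restriction on $M$ at all, whereas the paper's comparison argument genuinely uses $M\le\tfrac{1}{16}$ to make $-\log 2$ small. What the paper's method buys is uniformity with the uniqueness proof of Proposition~\ref{unique-sol} (same linearisation, same operator $L$), while your method is shorter and isolates exactly where each hypothesis is used.
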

\begin{proof}
If $M\le\frac{1}{16}$ then clearly $w=0$ is small.
Set $u_t = tu+(1-t)w$ and compute as in the proof of Proposition
\ref{unique-sol}
\begin{eqnarray*}
H(w)&=& \Delta
w+\theta(x,w)= -4\|U\|^2_h e^{-2w}\le0, \\
\Delta (u-w) &=& -\theta(x,u) +\theta(x,w)-H(w) \\
&\ge& -[\theta(x,u)-\theta(x,w)] \\
&=& - \left(\int_0^1 \frac{\partial \theta}{\partial u}(x,u_t)\,dt
\right) (u-w).
\end{eqnarray*}
Therefore, $u-w$ satisfies $L(u-w)\ge0$ for $L\phi=\Delta\phi +
c\phi$ as in (\ref{def-L}), with $c\le0$ by Lemma \ref{theta-dec}.
Again, the strong maximum principle implies either that $u-w$ is
constant (which is easily ruled out except for the case $U=0$,
$u=0$) or that $u-w$ has no nonnegative maximum. Therefore
$u\le w$ on all $\Sigma$.

Similar reasoning shows that $\chi_M$ is a lower bound for any small
solution $u$. When $M\le\frac1{54}$,  $\chi_M$ is small. The proof
of Theorem \ref{exist-sol} shows that $f$ achieves a nonnegative
maximum value at its only critical point $C_{\rm max}<0$ when
$M\le\frac1{54}$.  On the other hand, $f(0)<0$.  The
Intermediate Value Theorem implies $\chi_M\ge C_{\rm max}$, which is
equivalent to the definition for $\chi_M$ to be small.

Moreover,
$$H(\chi_M) = \Delta\chi_M - 16\|U\|^2_h e^{-2\chi_M} - 2e^{\chi_M} + 2
\ge f(\chi_M)=0.$$ This implies that for $u_t = t\chi_M + (1-t)u$
and $L\phi=\Delta\phi+c\phi$ as in (\ref{def-L}), $L(\chi_M-u)\ge0$
with $c\le0$. Then the strong maximum principle implies $\chi_M\le
u$ on all $\Sigma$.
\end{proof}

\begin{cor} \label{u-limit}
Fix $(\Sigma,h)$. As $U\to0$, the unique small solution $u=u_U$ to
(\ref{tz-eq}) approaches $0$ uniformly.
\end{cor}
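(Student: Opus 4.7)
The plan is to obtain the result as a direct consequence of the two-sided bound established in Lemma \ref{improve-small}. Write $M = M(U) = \max_\Sigma \|U\|_h^2$. Since $U \to 0$ implies $M \to 0$, we may assume $M \le 1/54$, so that both the small solution $u_U$ exists (Theorem \ref{exist-sol}, Proposition \ref{unique-sol}) and the lemma applies. The lemma gives the pointwise sandwich
\[
\chi_M \le u_U(x) \le -\log 2 \quad \text{for all } x \in \Sigma,
\]
where the upper bound is already the target value, independent of $U$. Consequently
\[
\sup_\Sigma \bigl| u_U + \log 2 \bigr| \le |{-\log 2} - \chi_M|,
\]
so the whole statement reduces to showing that $\chi_M \to -\log 2$ as $M \to 0$.

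For this, recall that $\chi_M$ is the largest real root of
\[
f_M(C) = -4 M e^{-2C} - 4 e^C + 2.
\]
The function $f_M$ is strictly concave in $C$ (indeed $f_M'' = -16Me^{-2C} - 4e^C < 0$), and its unique critical point is $C_{\max}(M) = \tfrac{1}{3}\log(2M)$, which tends to $-\infty$ as $M\to 0$. On $(C_{\max}(M),\infty)$ the function $f_M$ is strictly decreasing, and at $M=0$ the equation $f_0(C) = -4e^C+2=0$ has the unique solution $C = -\log 2$. Since $f_M(C)$ depends continuously (indeed smoothly) on $(M,C)$, and $\partial f_0/\partial C = -2 \ne 0$ at $C = -\log 2$, the implicit function theorem produces a continuous function $M \mapsto C(M)$ near $M = 0$ with $C(0) = -\log 2$ and $f_M(C(M)) = 0$. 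Because $C(M)$ lies to the right of $C_{\max}(M)$ for all sufficiently small $M > 0$, it is precisely the largest root, i.e. $C(M) = \chi_M$.

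Combining the two ingredients, $|{-\log 2} - \chi_M| \to 0$ as $M \to 0$, so $u_U \to -\log 2$ uniformly on $\Sigma$. I do not anticipate a real obstacle here; the only subtlety worth spelling out is that the continuity of $\chi_M$ in $M$ is not merely formal (it requires identifying $\chi_M$ as the particular branch of roots persisting from $M=0$, as opposed to the other branch which escapes to $-\infty$ along $C_{\max}(M)$), and this is handled cleanly by the implicit function theorem as above.
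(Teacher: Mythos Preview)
Your proof is correct and follows exactly the same approach as the paper: apply the sandwich $\chi_M \le u_U \le -\log 2$ from Lemma \ref{improve-small} and observe that $\chi_M \to -\log 2$ as $M\to 0$. The paper's proof is the single line ``As $M\to 0$, $\chi_M\to -\log 2$,'' so your version simply fills in the justification of that limit (via the implicit function theorem) that the paper leaves to the reader.
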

\begin{proof}
As $M\to0$, $\chi_M\to 0$.
\end{proof}
\begin{thm}
Given a closed hyperbolic Riemann surface $(\Sigma,h)$, the family
of small solutions $u=u_U$ to (\ref{tz-eq}) is smoothly varying in
$U$ for $$U\in\mathcal U_{1/54} = \left\{ U : M=\max_\Sigma
\|U\|^2_h \le \frac1{54} \right\}.$$
\end{thm}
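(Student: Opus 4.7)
The plan is to apply the Banach-space implicit function theorem to the map
\[
F: C^{2,\alpha}(\Sigma)\times H^0(\Sigma,K^3) \to C^{0,\alpha}(\Sigma),\quad F(u,U) = \Delta_h u - 4\|U\|^2_h e^{-2u} - 4e^u + 2,
\]
where $H^0(\Sigma,K^3)$ denotes the finite-dimensional complex vector space of holomorphic cubic differentials. The map $F$ is real-analytic jointly in $(u,U)$: polynomial in the components of $U$ and entire in $u$. By Theorem \ref{exist-sol} and Proposition \ref{unique-sol}, for every $U\in\mathcal{U}_{1/54}$ the equation $F(u,U)=0$ has a unique small solution $u_U$, so I have a well-defined assignment $U\mapsto u_U$ whose smoothness is what needs to be established.

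The key step is to show that the partial derivative
\[
D_u F(u_0,U_0)\phi = \Delta_h \phi + c\phi, \qquad c = 8\|U_0\|^2_h e^{-2u_0} - 4e^{u_0} = 4e^{u_0}\bigl(2\|U_0\|^2_h e^{-3u_0} - 1\bigr),
\]
is a Banach-space isomorphism $L:C^{2,\alpha}(\Sigma)\to C^{0,\alpha}(\Sigma)$ at each pair $(u_0,U_0)=(u_{U_0},U_0)$. Smallness gives $c\le 0$ on $\Sigma$ (exactly Lemma \ref{theta-dec}), and $c\not\equiv 0$: if $U_0=0$ then $u_0\equiv-\log 2$ forces $c\equiv -2$, while for $U_0\ne 0$ the differential $U_0$ necessarily has zeros on $\Sigma$ (since $\deg K^3 = 6g-6>0$) and at such a zero $c = -4e^{u_0}<0$. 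The strong maximum principle, exactly as deployed in Proposition \ref{unique-sol}, then gives $\ker L = \{0\}$, and since $L$ is a second-order elliptic operator on a closed surface it is Fredholm of index zero; hence an isomorphism.

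The implicit function theorem therefore produces a real-analytic map $U\mapsto v_U$ from a neighbourhood of $U_0$ in $H^0(\Sigma,K^3)$ into $C^{2,\alpha}(\Sigma)$ satisfying $F(v_U,U)=0$. By continuity $v_U$ remains small near $U_0$, so uniqueness of small solutions forces $v_U=u_U$, establishing local $C^{2,\alpha}$-smooth (indeed real-analytic) dependence at every $U_0\in\mathcal{U}_{1/54}$. Running the same argument in $C^{k,\alpha}(\Sigma)$ for every $k\ge 0$ and invoking uniqueness once more, the family $U\mapsto u_U$ is smooth as a map into $C^\infty(\Sigma)$. The only real obstacle is the invertibility of $L$, but the smallness condition is calibrated precisely so that the maximum-principle argument already underpinning uniqueness delivers it at no extra cost.
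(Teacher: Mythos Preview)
Your argument is correct and is essentially the paper's own: apply the implicit function theorem in $C^{2,\alpha}$, with invertibility of $D_uF=\Delta+c$ coming from smallness ($c\le 0$) plus the maximum principle, then identify the IFT branch with $u_U$ via uniqueness of small solutions. The only place to tighten is the phrase ``by continuity $v_U$ remains small'': this needs $u_{U_0}$ to be \emph{strictly} small, which the paper supplies via Lemma~\ref{improve-small} (the bound $u_{U_0}\ge\chi_M>\tfrac13\log(2M)$ for $M\le\tfrac{1}{54}$); conversely, your explicit check that $c\not\equiv 0$ through the forced zeros of $U_0$ is a detail the paper leaves implicit.
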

\begin{proof}
We use the continuity method. Consider $$H(u,U) = \Delta u -
16\|U\|^2_h e^{-2u} -2e^u +2$$ for $U\in\mathcal U_{1/54}$ and $u\in
C^{2,\alpha}(\Sigma)$.  Then it is straightforward to check that $H$
is a Fr\'echet differentiable map from $C^{2,\alpha}\times \mathcal
U_{1/54} \to C^{0,\alpha}$. In order to use the Implicit Function
Theorem (for $U$ in the interior of $\mathcal U_{1/54}$), we must
check that the partial differential $$\frac{\delta H}{\delta u} \!:
\eta \mapsto \Delta \eta +32\|U\|_h^2 e^{-2u}\eta - 2 e^u \eta$$ has
a continuous inverse from $C^{0,\alpha}$ to $ C^{2,\alpha}$.  This
follows from checking the kernel of $\frac{\delta H}{\delta u}$
vanishes, which is true by the assumption that $u$ is small and the
maximum principle.  Thus there is a family of solutions $u_U$ for
each $U$ in a neighborhood of each $U_0$ in the interior of
$\mathcal U_{1/54}$. These solutions are still small by continuity
and the improved bound in Lemma \ref{improve-small} (since $\chi_M >
\frac13\log(16M)$). Then Proposition \ref{unique-sol} allows us to
identify these solutions with the ones already produced in Theorem
\ref{exist-sol}.

For good measure, the closedness part of the continuity method
follows from Lemma \ref{improve-small}, which shows that $u_U$ and
$\Delta u_U$ are uniformly in $L^p$ for any $p<\infty$. Then the
elliptic theory shows $u_U\in L^p_2$ uniformly.  Sobolev embedding
then gives uniform bounds in $C^{1,\alpha}$, and further
bootstrapping implies uniform $C^{2,\alpha}$ bounds of $u_U$  as $U$
varies. Ascoli-Arz\'ela allows us to take limits to show closedness.

The variation is smooth by standard elliptic theory.
\end{proof}
\begin{rem} We do not expect the bound $\|U\|_h^2\le \frac1{54}$ to be
 sharp as a condition for the existence of solutions.
\end{rem}
\begin{prop}
Let $\Sigma$ be a closed Riemann surface of genus $g\ge2$ equipped
with a hyperbolic metric $h$ as above. If $U$ is a holomorphic
cubic differential on $\Sigma$ which is large in the sense that
$$ \int_\Sigma |U|^{\frac23} > \frac{2\pi\sqrt[3]4}3\,(g-1),$$
then there is no solution to (\ref{tz-eq}) on $\Sigma$.
\end{prop}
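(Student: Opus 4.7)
The plan is to integrate the equation (\ref{tz-eq}) over $\Sigma$, apply Gauss--Bonnet to evaluate the area, and then use a sharp pointwise inequality between the nonlinear terms to derive a lower bound for $\int_\Sigma|U|^{2/3}$ that any solution must satisfy. This lower bound will exactly contradict the hypothesis.

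First I would integrate (\ref{tz-eq}) against $dA_h$. The Laplacian term vanishes because $\Sigma$ is closed, and Gauss--Bonnet with $\kappa_h\equiv-1$ gives $\mathrm{Area}(\Sigma,h)=4\pi(g-1)$. Consequently, for any solution $u$,
\[
\int_\Sigma \|U\|_h^2 e^{-2u}\,dA_h+\int_\Sigma e^u\,dA_h=2\pi(g-1).
\]
This is the global constraint that existence of a solution places on $(h,U)$.

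Next I would establish the pointwise inequality
\[
\|U\|_h^2 e^{-2u}+e^u\;\ge\;\tfrac{3}{\sqrt[3]{4}}\,\|U\|_h^{2/3},
\]
which holds for \emph{all} real $u$ (independent of whether $u$ solves anything). Setting $a=\|U\|_h^2 e^{-2u}$ and $b=e^u$, one has $ab^2=\|U\|_h^2$, so the task is to minimize $a+b$ subject to $ab^2$ being fixed. A Lagrange-multiplier calculation (or weighted AM--GM with weights $\tfrac13,\tfrac23$) yields the minimum $a+b=3(ab^2)^{1/3}/\sqrt[3]{4}$, attained when $b=2a$. The constant $3/\sqrt[3]{4}$ is sharp and gives exactly the numerical factor appearing in the proposition.

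Finally I would observe that $\|U\|_h^{2/3}\,dA_h$ is intrinsic: if $U=Q\,dz^3$ and $h=\gamma|dz|^2$ in a conformal chart, then $\|U\|_h^2=|Q|^2/\gamma^3$ and $dA_h=\gamma\,(i/2)\,dz\wedge d\bar z$, so
\[
\|U\|_h^{2/3}\,dA_h=|Q|^{2/3}\,(i/2)\,dz\wedge d\bar z=|U|^{2/3},
\]
a well-defined metric-independent $(1,1)$-form on $\Sigma$. Integrating the pointwise bound and inserting the global identity from step one gives
\[
2\pi(g-1)\;\ge\;\tfrac{3}{\sqrt[3]{4}}\int_\Sigma|U|^{2/3},
\]
i.e.\ $\int_\Sigma|U|^{2/3}\le\tfrac{2\pi\sqrt[3]{4}}{3}(g-1)$. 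Under the hypothesis of the proposition this inequality is violated, contradicting the existence of a solution.

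There is no genuine obstacle here; the only point requiring care is checking the sharp constant in the pointwise inequality and confirming that the numerical factor from Gauss--Bonnet combines with it to reproduce exactly $\tfrac{2\pi\sqrt[3]{4}}{3}(g-1)$. Everything else is a direct application of integration by parts on a closed surface and weighted AM--GM.
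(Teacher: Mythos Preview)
Your proof is correct, and it reaches the same conclusion as the paper's, but the route is genuinely different at the key step. After the common first move---integrating \eqref{tz-eq} and using Gauss--Bonnet to obtain $A+B=2\pi(g-1)$ with $A=\int_\Sigma\|U\|_h^2e^{-2u}\,dV_h$ and $B=\int_\Sigma e^u\,dV_h$---the paper proceeds globally: it applies H\"older's inequality to get $\int_\Sigma|U|^{2/3}\le A^{1/3}B^{2/3}$ and then maximises $AB^2$ subject to $A+B=2\pi(g-1)$ via Lagrange multipliers, obtaining $AB^2\le\frac{32}{27}[\pi(g-1)]^3$. You instead prove the sharp \emph{pointwise} inequality $\|U\|_h^2e^{-2u}+e^u\ge\frac{3}{\sqrt[3]{4}}\|U\|_h^{2/3}$ (weighted AM--GM on $a+\tfrac{b}{2}+\tfrac{b}{2}$) and integrate it directly. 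Your approach is slightly more elementary---it avoids H\"older entirely and does the optimisation once, locally---while the paper's approach separates the analytic estimate (H\"older) from the algebraic optimisation, which some readers may find more transparent. The two are dual formulations of the same constrained-optimum fact, and both yield exactly the constant $\frac{2\pi\sqrt[3]{4}}{3}(g-1)$.
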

\begin{proof}
Let $u$ be a solution to (\ref{tz-eq}). Integrate (\ref{tz-eq}) and
use Gauss-Bonnet to find
\begin{equation} \label{linear-con}
16\int_{\Sigma} \|U\|_h^2 e^{-2u} \, dV_h + 2 \int_{\Sigma} e^u\,
dV_h = 8\pi(g-1)
\end{equation}
for $dV_h$ the volume form of the hyperbolic metric.  H\"older's
inequality shows
\begin{equation} \label{int-u-lower-bound} \int_\Sigma |U|^{\frac23}
= \int_\Sigma \|U\|_h^{\frac23} \,dV_h \le \left(\int_\Sigma
\|U\|_h^2 e^{-2u}\, dV_h \right)^{\frac13} \left( \int_\Sigma e^u
dV_h \right)^{\frac23}.
\end{equation}
If we denote $$A = \int_\Sigma \|U\|_h^2 e^{-2u}\, dV_h, \qquad B =
\int_\Sigma e^u \, dV_h,$$ we can maximise $AB^2$ for $A,B>0$
subject to the constraint $8A+B = 4\pi(g-1)$ to find that $$AB^2 \le
\frac {32}{27} [\pi(g-1)]^3.$$ Then (\ref{linear-con}) and
(\ref{int-u-lower-bound}) prove the contrapositive of the
proposition.
\end{proof}

\section{Solutions with zero cubic differential.} \label{U-zero}

In this section, we study minimal Lagrangian surfaces in $\CH^2$
corresponding to $U=Q\,dz^3=0$. First of all, consider solutions to
(\ref{tz-eq}) if $U=0$. By the maximum principle, we have
\begin{lem}
If $\Sigma$ is a closed Riemann surface equipped with hyperbolic
metric $h$ and cubic differential $U=0$, then the unique solution to
(\ref{tz-eq}) is $u=0$.
\end{lem}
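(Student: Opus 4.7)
The plan is a direct application of the maximum principle: first verify that $u = -\log 2$ solves the equation, then show any other solution must equal it. With $U=0$, equation \eqref{tz-eq} reduces to
\begin{equation*}
\Delta u - 4 e^u + 2 = 0.
\end{equation*}
Substituting $u \equiv -\log 2$ gives $\Delta u = 0$ and $-4e^{-\log 2} + 2 = -2 + 2 = 0$, so this constant is indeed a solution.

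For uniqueness, suppose $u$ is any smooth solution on the closed surface $\Sigma$. Since $\Sigma$ is compact, $u$ attains its maximum at some point $p$ and its minimum at some point $q$. At $p$ we have $\Delta u(p) \le 0$, so the equation rearranged as $\Delta u = 4e^u - 2$ gives $4e^{u(p)} - 2 \le 0$, that is $u(p) \le -\log 2$. Symmetrically, at $q$ we have $\Delta u(q) \ge 0$, yielding $u(q) \ge -\log 2$. Since $u(q) \le u \le u(p)$ pointwise, this forces $u \equiv -\log 2$.

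There is really no obstacle here — the argument is a textbook two-sided maximum principle and does not even require the strong version used in Proposition \ref{unique-sol}. The only small thing worth noting is that no smallness hypothesis on $u$ is needed in this case, because the nonlinearity $-4e^u + 2$ is strictly decreasing in $u$ throughout, so the uniqueness argument goes through globally rather than only for small solutions.
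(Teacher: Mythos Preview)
Your proof is correct and is exactly what the paper intends: the paper states the lemma with only the preamble ``By the maximum principle, we have'' and gives no further details, so your two-sided maximum/minimum argument is the spelled-out version of that one-line justification. Your closing remark that no smallness hypothesis is needed here (since $u\mapsto -4e^u+2$ is globally monotone) is also correct and worth keeping.
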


On the upper half-plane $\{x+iy:y>0\}$, consider $Q=0$. In this
case, the metric corresponding to $s = \frac1{y\sqrt2}$ solves
(\ref{eq:hyptoda}). The connexion matrices $A,B$ satisfy
\begin{eqnarray*} A &=&\left(\begin{array}{ccc} \frac{i}{2y} &
0 & \frac1{y\sqrt2} \\ 0 & -\frac i{2y} & 0 \\
0 & \frac1{y\sqrt2} & 0 \end{array} \right), \\
 B &=&
\left(\begin{array}{ccc} \frac i{2y} & 0&0\\ 0 & -\frac i{2y} &
\frac 1{y\sqrt2} \\ \frac 1{y\sqrt2} & 0 & 0
\end{array} \right), \\
F^{-1}F_x = A+B &=& \left(\begin{array}{ccc} \frac iy & 0 & \frac
1{y\sqrt2} \\ 0 & -\frac iy & \frac 1{y\sqrt2} \\ \frac 1{y\sqrt2} &
\frac 1{y\sqrt2} & 0 \end{array} \right) \equiv \frac Ly,\\
F^{-1}F_y = iA-iB &=& \left(\begin{array}{ccc}0&0& \frac i{y\sqrt2} \\
0&0& -\frac i{y\sqrt2} \\ -\frac i{y\sqrt2} & \frac i{y\sqrt2} & 0
\end{array}\right) \equiv \frac Ky.
\end{eqnarray*}
To solve the  initial value problem in $y$, let $y=e^t$ to find
$F^{-1}F_t = K.$ It is straightforward to integrate these equations
to find a fundamental solution of the initial-value problem for any
path from $(0,1)$ to $(x,y)$:
$$ \exp(Lx) \cdot \exp(Kt) = \exp(Lx) \cdot \exp(K\log y).$$ This formula
follows from the Maurer-Cartan equations, which show the fundamental
solution is independent of the choice of path. Thus we may integrate
along the piecewise-linear path from $(0,1)$ to $(x,1)$ to $(x,y)$.

\renewcommand{\arraystretch}{1.3}
It is convenient to take the initial condition  $$F_0 = (f_1 \,\,\,
f_2\,\,\, f_3) = \left(\begin{array} {ccc} \frac1{\sqrt2} &
\frac 1{\sqrt2} &0 \\ -\frac i{\sqrt2} & \frac i{\sqrt2} & 0 \\
0&0&1
\end{array} \right).$$ (This $F_0\notin SU(2,1)$, but we still use
it to ensure $f$ is real below. The factor $\det F_0=i$ is irrelevant upon projecting $W_-\to \CH^2$
in any case.) So the solution is $F= F_0 \cdot \exp (Lx) \cdot \exp
(K\log y)$, whose last column $f_3=f$ is given by $$f = \left(
\begin{array}{c}\frac xy \\ \frac{-1+x^2+y^2}{2y} \\
\frac{1+x^2+y^2}{2y} \end{array} \right).$$ Note $f$ parameterises
the upper component of the real hyperboloid $$(\mbox{Re}\,u_1)^2 +
(\mbox{Re }u_2)^2 - (\mbox{Re }u_3)^2=-1$$ in $\re^3\subset\co^3$, and the immersion $[f]$
into $\mathbb{CH}^2$ is the standard immersion of $\mathbb {RH}^2
\subset \mathbb{CH}^2$, which is of course minimal Lagrangian.
\renewcommand{\arraystretch}{1}

\section{Fundamental domains in $\mathbb {CH}^2$.}

We now show, at least for  $U$ near 0, that the minimal Lagrangian
surface produced by a small solutions $u$ to (\ref{tz-eq})
determines a fundamental domain for induced action of $\pi_1\Sigma$
on $\mathbb{CH}^2$.  Recall the notation of the Legendrian $SU(2,1)$
frame

$$ F=(f_1\,\,f_2\,\,f_3), \qquad f_1= \frac{f_z}{|f_z|}, \quad f_2 =
\frac{f_{\bar z}}{|f_{\bar z}|}, \quad f_3 = f.$$

At a point $p$ in $\tilde \Sigma$ the universal cover of
$\Sigma$, we may choose coordinates in $\mathbb C^3$ so that
\begin{equation}\label{frame-id} f_1 = \left(\begin{array}{c}1\\0\\0\end{array} \right), \qquad
 f_2 = \left(\begin{array}{c}0\\1\\0\end{array} \right), \qquad
f_3 = \left(\begin{array}{c}0\\0\\1\end{array} \right).
\end{equation}
We may also choose a conformal normal coordinate $z$ so that at $p$,
$z=0$ and $\psi=\psi_z=\psi_{\bar z}=0$ for the affine metric
$e^\psi|dz|^2$. In terms of $s=|f_z|=|f_{\bar z}| = \tfrac{1}{\sqrt{2}} e^{\psi/2}$,
this means $s=1/\sqrt{2}$, $s_z=s_{\bar z}=0$ at $z=0$. Moreover, we may
rotate $z$ so that at $z=0$, $Q\in[0,\infty)$. Under these assumptions, at
$z=0$,
\begin{equation} \label{F-evolve}
F_z=A = \left( \begin{array}{ccc} 0&0&1/\sqrt{2} \\-2Q&0&0 \\ 0&1/\sqrt{2}&0
\end{array} \right), \qquad F_{\bar z} =B = \left(\begin{array}{ccc} 0&2Q&0
\\ 0&0&1/\sqrt{2} \\ 1/\sqrt{2}&0&0 \end{array} \right).
\end{equation}

The tangent plane to $M=\pi(f(\tilde \Sigma))$ at $p$ is spanned by $\pi_*(f_x) =
\pi_*(f_z+f_{\bar z})=\pi_*(f_1+f_2)$ and $\pi_*(f_y) = \pi_* (if_z
- i f_{\bar z}) = \pi_*(if_1-if_2),$ for $\pi\!:W_-\to
\mathbb{CH}^2$ the projection. So the Lagrangian copy of $\mathbb
{RH}^2$ tangent to $M$ in $\mathbb{CH}^2$ can be described by
$$\mathcal T =\{\pi[(a+ib)f_1 + (a-ib)f_2 + c f_3] : a^2+b^2<\sfrac12c^2 \}.$$
This explicit description of the tangent space allows us also to
describe the totally geodesic Lagrangian plane normal to $\mathcal
T$ (the image of the normal space under the exponential map) as
\begin{equation}\label{normal-plane}
\mathcal N = \{\pi[(ia-b)f_1 + (ia+b)f_2 + cf_3] : a^2 + b^2
<\sfrac12c^2 \}.
\end{equation}
This is because the normal vectors
to a Lagrangian tangent plane are determined by the action of the
complex structure $J$ on tangent vectors.

\begin{thm} \label{immersion-criterion}
Let $\Sigma$ be a compact Riemann surface equipped with a cubic
differential $U$ and a solution $u$ to (\ref{tz-eq}).  For the disc
$$\mathcal D=\{(a,b)\in\re^2:a^2+b^2<\sfrac12\}$$
and $\tilde \Sigma$ the universal cover of $\Sigma$, consider the map $\Phi\!:
\tilde\Sigma\times\mathcal D \to \mathbb{CH}^2$ given by
$$\Phi(z,a,b) = \pi[(ia-b)f_1(z) + (ia+b)f_2(z) + f_3(z)].$$
Then $\|U\|_g = e^{-\frac32u}\|U\|_h \le 1/2$ on all of $\Sigma$
if and only if $\Phi$ is an immersion.
\end{thm}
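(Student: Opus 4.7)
The condition for $\Phi$ to be an immersion is pointwise in $z\in\tilde\Sigma$, so I fix $p\in\tilde\Sigma$ and adopt the normalisations stated just before the theorem: conformal coordinates $z$ with $z(p)=0$, $s(p)=1$, $s_z(p)=s_{\bar z}(p)=0$, frame $F(p)=I$, and $z$ rotated so that $Q(p)\in[0,\infty)$. Under these normalisations $\|U\|_m(p)=Q(p)$, so it suffices to show that $d\Phi_{(p,a,b)}$ is injective for every $(a,b)\in\mathcal D$ if and only if $Q(p)\le\sqrt 2$. Writing $\zeta=ia-b$, one has $\eta=ia+b=-\bar\zeta$ and $|\zeta|^2=a^2+b^2<\tfrac12$.

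From \eqref{F-evolve} the lift $v(z,a,b)=\zeta f_1(z)-\bar\zeta f_2(z)+f_3(z)$ has partials at $(p,a,b)$
\[
v=\begin{pmatrix}\zeta\\-\bar\zeta\\1\end{pmatrix},\quad
v_a=\begin{pmatrix}i\\i\\0\end{pmatrix},\quad
v_b=\begin{pmatrix}-1\\1\\0\end{pmatrix},\quad
v_z=\begin{pmatrix}1\\-Q\zeta\\-\bar\zeta\end{pmatrix},\quad
v_{\bar z}=\begin{pmatrix}-Q\bar\zeta\\1\\\zeta\end{pmatrix}.
\]
Since the kernel of $d\pi$ along a fibre is $\mathrm{span}_{\mathbb R}(v,iv)$, the differential $d\Phi_{(p,a,b)}$ is injective if and only if the six real vectors $\{v,iv,v_x,v_y,v_a,v_b\}$ (with $v_x=v_z+v_{\bar z}$ and $v_y=i(v_z-v_{\bar z})$) are $\mathbb R$-linearly independent in $\mathbb C^3\cong\mathbb R^6$. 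Writing this out as a $6\times6$ real matrix and reducing via the row operations suggested by the symmetry in the first two complex coordinates yields, up to a nonzero real constant, the determinant
\[
D(\zeta,Q)=-1+(Q^2-2)|\zeta|^2+2Q\,\Re(\zeta^3),
\]
where we have used the identity $\Re(\zeta^3)=b(3a^2-b^2)$. So $\Phi$ is an immersion on $\{p\}\times\mathcal D$ iff $D(\zeta,Q)\neq 0$ throughout $|\zeta|^2<\tfrac12$.

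It remains to check that $D$ is nowhere zero on $\mathcal D$ iff $Q\le\sqrt 2$. If $Q\le\sqrt 2$, then $(Q^2-2)|\zeta|^2\le 0$ and $|\Re(\zeta^3)|\le|\zeta|^3<1/(2\sqrt 2)$, so
\[
D\le -1+2Q\cdot\frac{1}{2\sqrt 2}\le -1+2\sqrt 2\cdot\frac{1}{2\sqrt 2}=0,
\]
with strict inequality, i.e.\ $D<0$ throughout $\mathcal D$. Conversely, if $Q>\sqrt 2$, take $\zeta=t$ real and positive (that is, $(a,b)=(0,-t)$); then $D(t,Q)=-1+(Q^2-2)t^2+2Qt^3$ is $-1$ at $t=0$ and equals $Q^2/2-2+Q/\sqrt 2$ at $t=1/\sqrt 2$, which strictly increases in $Q$ and vanishes at $Q=\sqrt 2$, hence is positive for $Q>\sqrt 2$. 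The intermediate value theorem produces $t_0\in(0,1/\sqrt 2)$ with $D(t_0,Q)=0$, giving a point $(0,-t_0)\in\mathcal D$ at which $d\Phi$ fails to be injective. The main obstacle is simply executing the $6\times6$ determinant calculation cleanly; the remaining analysis of $D$ is elementary.
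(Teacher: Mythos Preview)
Your proof is correct and follows the same overall strategy as the paper: normalize at a point, compute the Jacobian determinant of $\Phi$, and analyze for which values of $Q$ it can vanish on $\mathcal D$. The determinant you obtain, $D(\zeta,Q)=-1+(Q^2-2)|\zeta|^2+2Q\,\Re(\zeta^3)$, agrees (up to the factor $4$) with the paper's $\mathcal J=4[kQ^2+\ell Q-2k-1]$ once one uses $k=|\zeta|^2$ and $\ell=2\Re(\zeta^3)$.

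There are two minor differences worth noting. First, the paper passes to inhomogeneous coordinates on $\CH^2$ and computes a $4\times4$ real Jacobian, whereas you stay in $\C^3$ and test independence of $\{v,iv,v_x,v_y,v_a,v_b\}$; these are equivalent criteria and lead to the same polynomial. Second, for the sign analysis the paper treats $\mathcal J$ as a quadratic in $Q$ and minimizes its positive root $R$ over the $(k,\ell)$-domain, finding $R_{\min}=\sqrt2$; your rewriting via $\Re(\zeta^3)$ gives the bound in one line, since for $Q\le\sqrt2$ the middle term is nonpositive and $2Q|\zeta|^3<2\sqrt2\cdot(2\sqrt2)^{-1}=1$. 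For the converse the paper exhibits the explicit failure point $(a,b)=(0,-1/Q)$ (where $\Phi_x=0$), while you use the intermediate value theorem along $\zeta=t>0$; both are fine. Your complex-variable packaging is slightly cleaner, but the arguments are essentially the same.
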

\begin{proof}
To calculate when $\Phi$ is an immersion, we may work at a
point $z=0$, and make the coordinate assumptions
(\ref{frame-id}-\ref{F-evolve}) above. Set $w=b+ia$, and let $W$ be the vector
$(-\bar w,w,1)$, then $\Phi = \pi(FW)$ and
\[
d(FW) = AWdz+BWd\bar z+f_2dw -f_1d\bar w.
\]
Write $FW = (Z_1,Z_2,Z_3)$. Then in inhomogeneous coordinates
$\Phi=(Z_1/Z_3,Z_2/Z_3)$, and its differential has the form
\[
d\Phi = \frac{1}{Z_3^2}(Z_3dZ_1-Z_1dZ_3, Z_3dZ_2-Z_2dZ_3).
\]
At $z=0$, using the assumptions above, we have
\begin{eqnarray*}
(Z_1,Z_2,Z_3)& = &(-\bar w,w,1)\\
(dZ_1,dZ_2,dZ_3)& =&
(\tfrac{1}{\sqrt{2}}dz + 2Qwd\bar z-d\bar w,
2Q\bar wdz+\tfrac{1}{\sqrt{2}}d\bar z+dw,\tfrac{1}{\sqrt{2}}(wdz-\bar wd\bar z)).
\end{eqnarray*}
Thus
\begin{equation}\label{dPhi}
d\Phi = \begin{pmatrix}
\tfrac{1}{\sqrt{2}}(1+|w|^2)dz +(2Qw-\tfrac{1}{\sqrt{2}}\bar w^2)d\bar z -d\bar w\\
(2Q\bar w-\tfrac{1}{\sqrt{2}}w^2)dz + \tfrac{1}{\sqrt{2}}(1+|w|^2)d\bar z +dw
\end{pmatrix}
\end{equation}
Thus $d\Phi$ will be injective provided these two $1$-forms and their complex
conjugates are linearly independent. This happens
when the following matrix is invertible:
\[
\begin{pmatrix}
\tfrac{1}{\sqrt{2}}(1+|w|^2) & 2Qw-\tfrac{1}{\sqrt{2}}\bar w^2 & 0 &-1\\
 2Q\bar w-\tfrac{1}{\sqrt{2}} w^2 & \tfrac{1}{\sqrt{2}}(1+|w|^2) & -1 & 0\\
 2Q\bar w-\tfrac{1}{\sqrt{2}} w^2 & \tfrac{1}{\sqrt{2}}(1+|w|^2) & 1 & 0\\
\tfrac{1}{\sqrt{2}}(1+|w|^2) & 2Qw-\tfrac{1}{\sqrt{2}}\bar w^2 & 0 &1
\end{pmatrix}
\]
The determinant of this is
\begin{eqnarray*}
\caJ & = & 4(\tfrac12(1+|w|^2)^2-|2Qw-\bar w^2/\sqrt{2}|^2)\\
&=& 2(-8Q^2|w|^2+2\sqrt{2}Q(w^3+\bar w^3) +2|w|^2+1).
\end{eqnarray*}
We must establish conditions on $Q\geq 0$ for
which $\caJ$ is non-zero for all $|w|^2<\tfrac12$. Since $\caJ>0$ at $w=0$ we must
ensure $\caJ$ stays positive. Let us write
\[
\tfrac12 \caJ = -8\alpha Q^2 +2\sqrt{2}\beta Q + 2\alpha +1,
\]
for $\alpha = |w|^2$ and $\beta = w^3+\bar w^3$. By considering this as
a polynomial in $Q$, we see that it is positive for $0\leq Q<Q_0$, where $Q_0$
is the single positive root:
\[
Q_0 = \frac{1}{2\sqrt{2}}\left(\frac{\beta}{2\alpha}
+\sqrt{\left(\frac{\beta}{2\alpha}\right)^2+\frac{1}{\alpha}+2}\right).
\]
The condition $|w|^2<\tfrac12$ is equivalent to
\[
0\leq\alpha < \frac{1}{2},\quad
-\sqrt{\alpha}\leq \frac{\beta}{2\alpha}\leq \sqrt{\alpha}.
\]
It is straightforward to check that the infimum of $Q_0$ over $|w|^2\le1/2$ is only attained
at the boundary values $\alpha =1/2 $, $\beta=-1/\sqrt{2}$, at which $Q_0=1/2$. Thus
$Q\leq 1/2$ ensures that $d\Phi$ is invertible at $z=0$ for all $w\in\caD$.
On the other hand, whenever $Q> 1/2$ an examination of \eqref{dPhi} shows that
$\partial\Phi/\partial x=0$ at $w=-Q/2\sqrt{2}\in\caD$, so $\Phi$ cannot be an
immersion.

Finally, we note that $\|U\|_g=Q$ at $z=0$, therefore $\|U\|_g\leq 1/2$
uniformly over $\Sigma$ if and only if $\Phi$ is an immersion.
\end{proof}

\begin{prop}
There is a constant $\kappa$ so that if
$\|U\|_h<\kappa$ on all of $\Sigma$, then $\Phi$ is a diffeomorphism
from $\tilde \Sigma\times \mathcal D \to \mathbb{CH}^2$. Moreover,
the natural continuous extension $\bar \Phi \!: \tilde\Sigma \times
\bar {\mathcal D} \to \overline{\mathbb{CH}^2}$ is injective.
\end{prop}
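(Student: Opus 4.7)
The plan is to deduce the diffeomorphism property from three ingredients---local diffeomorphism, properness, and simple connectivity of $\CH^2$---and then address boundary injectivity by perturbation from the case $U=0$.

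First, by Corollary \ref{u-limit} the unique small solution $u=u_U$ of (\ref{tz-eq}) tends uniformly to $-\log 2$ as $\|U\|_h\to 0$, so for $\|U\|_h<\kappa$ with $\kappa$ sufficiently small the pointwise quantity $\|U\|_m = e^{-3u/2}\|U\|_h$ is bounded well below $\sqrt{2}$. Theorem \ref{immersion-criterion} then gives that $\Phi$ is an immersion, and since both $\tilde\Sigma\times\mathcal{D}$ and $\CH^2$ are four-real-dimensional, $\Phi$ is a local diffeomorphism. The base case $U=0$ is handled by Section \ref{U-zero}: there $u\equiv -\log 2$, the image $\varphi_0(\tilde\Sigma)$ is the standard totally geodesic Lagrangian $\RH^2\subset\CH^2$, and $\Phi_0$ is the normal Lagrangian exponential along this $\RH^2$. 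A direct check in the ball model shows $\CH^2$ is foliated by the totally geodesic Lagrangian $\RH^2$'s meeting the base $\RH^2$ orthogonally in a single point each, and that the natural extension $\bar\Phi_0$ is a homeomorphism of closures.

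The main technical obstacle is to show properness of $\Phi$ for small $U$. Escape of $(a,b)$ to $\partial\mathcal{D}$ is immediate: the vector $v=(ia-b)f_1+(ia+b)f_2+f_3$ satisfies $\g{v}{v}=2(a^2+b^2)-1$, so $[v]\to\partial\CH^2$ as $a^2+b^2\to\tfrac12$. For escape of $z$ to infinity in $\tilde\Sigma$, I would show that the equivariant minimal Lagrangian immersion $\varphi_U=\pi\circ f_U$ is itself a proper embedding for $U$ small, by perturbing the proper embedding $\varphi_0\colon\RH^2\hookrightarrow\CH^2$. The smooth dependence of $u_U$ on $U$ and the uniform bounds of Lemma \ref{improve-small} yield $C^k_{\mathrm{loc}}$-closeness of $f_U$ to $f_0$; combined with $\rho_U$-equivariance and the compactness of a hyperbolic fundamental domain in $\tilde\Sigma$, this transfers properness from $U=0$ to all small $U$. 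Once $\Phi$ is proper, it is a proper local diffeomorphism from a connected manifold onto the simply connected $\CH^2$, hence a covering map, hence a diffeomorphism.

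Finally, for injectivity of $\bar\Phi$: interior injectivity is part of the diffeomorphism property just established, and the preimage of $\partial\CH^2$ is exactly $\tilde\Sigma\times\partial\mathcal{D}$, so it suffices to check injectivity on the boundary. Two boundary data mapping to the same ideal point would force the ideal boundary circles of $\overline{\mathcal{N}(z_1)}$ and $\overline{\mathcal{N}(z_2)}$ to meet in $\partial\CH^2\cong S^3$. For $U=0$ a short explicit computation in the ball model (using that normal Lagrangian planes at distinct points of $\RH^2$ have boundary circles that remain pairwise disjoint in $S^3$) settles this, and the small-$U$ case follows by continuity of $\bar\Phi_U$ in $U$. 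The hardest step throughout is turning the ``perturbation from $U=0$'' heuristic into quantitative estimates that survive uniformly to infinity in $\tilde\Sigma$, which is where Lemma \ref{improve-small} and $\rho_U$-equivariance are essential.
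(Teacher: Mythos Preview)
Your overall architecture---immersion via Theorem \ref{immersion-criterion}, properness, then covering map onto the simply connected $\CH^2$---matches the paper's. The immersion step and the covering-map conclusion are fine and essentially identical to what the paper does.

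The genuine gap is in your properness argument. You write that $C^k_{\mathrm{loc}}$-closeness of $f_U$ to $f_0$, together with $\rho_U$-equivariance over a compact fundamental domain, ``transfers properness from $U=0$ to all small $U$.'' This does not follow: equivariance tells you $f_U(\gamma z)=\rho_U(\gamma)f_U(z)$, but for long words $\gamma$ the matrices $\rho_U(\gamma)$ and $\rho_0(\gamma)$ can diverge without bound, so closeness on one fundamental domain gives no control at infinity. You acknowledge this as ``the hardest step'' but supply no mechanism. The paper does \emph{not} argue by perturbation of the map; instead it proves properness of $[f]$ directly (Proposition~\ref{proper}) by analysing the frame ODE $F^{-1}F_t = K + \tilde G$ along each geodesic ray in the upper half-plane, where $K$ is constant with eigenvalues $-1,0,1$ and $\tilde G$ is uniformly small. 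The key inputs are a gradient bound $\|\nabla u\|_h\to 0$ as $U\to 0$ (Proposition~\ref{bound-u-z}) to make $\tilde G$ small, and a differential-inequality lemma (Proposition~\ref{ode-infinity}) forcing exponential growth of the dominant component. This yields $\|f\|_E\to\infty$ along every ray, hence properness, with no appeal to the representation.

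Your boundary-injectivity argument is also different and weaker. You propose checking disjointness of normal-plane boundary circles at $U=0$ and invoking ``continuity in $U$'', but disjointness of infinitely many circles in $S^3$ is not preserved under mere continuous deformation without a uniform separation estimate---the same infinity problem as above. The paper's argument is much shorter: since the Jacobian computation in Theorem~\ref{immersion-criterion} gives $\mathcal J<0$ on the \emph{closed} disc when $\|U\|_m<\sqrt 2$, $\bar\Phi$ is an immersion of manifolds with boundary; then two distinct boundary points with the same image would have disjoint neighbourhoods mapping diffeomorphically onto the same half-ball, contradicting interior injectivity.
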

\begin{proof}
For $U$ near 0, note that the estimates above show that $\|U\|_h$
and $\|U\|_g = e^{-\frac32u}\|U\|_h$ are equivalent norms.
Therefore, Theorem \ref{immersion-criterion} above shows there is a
bound on $\sup\|U\|_h$ which implies $\Phi$ is an immersion.
On the other hand, $\Phi$ is a proper map if and only if $[f]$ is a
proper map from $\tilde \Sigma\to \mathbb{CH}^2$.  This is because
$\Phi$ corresponds to the exponential map on the normal bundle in
the direction transverse to the image of $[f]$, and so must be
proper in that direction. Proposition \ref{proper} below shows there
is a constant bound $k$ so that if $\|U\|_h<k$, then $[f]$ is
proper.

Therefore, there is a bound $\kappa$ so that if $\|U\|_h<\kappa$,
then $\Phi$ must be a proper immersion from $\tilde\Sigma\times
\mathcal D \to \mathbb{CH}^2$.  Proper immersions of connected
manifolds of the same dimension are covering maps (this can be
proved by the same techniques as the proof of the Stack of Records Theorem,
Problem 1.4.7 in \cite{guillemin-pollack}).  Thus $\Phi$ a
diffeomorphism, since its domain is simply connected.

To show $\bar \Phi$ is injective as well, note that the proof of
Theorem \ref{immersion-criterion} shows that if $\|U\|_g < 1/2$,
then $\bar\Phi$ is an immersion of manifolds with boundary. The
injectivity of $\Phi$ implies $\bar\Phi$ is injective also.
\end{proof}

\begin{cor} \label{fund-domain}
Corresponding to each surface produced in the previous proposition,
there is a fundamental domain $\mathcal F$ in $\mathbb {CH}^2$ for
the representation of $\pi_1\Sigma$ into $SU(2,1)$. Let
$\bar{\mathcal F}$ be the closure of $\mathcal F$ in
$\overline{\mathbb {CH}^2} \subset \mathbb{CP}^2$.  There are only a
finite number of $\gamma\in\pi_1\Sigma$ satisfying $\gamma\cdot \bar
{\mathcal F} \cap \bar {\mathcal F} \neq \emptyset$.
\end{cor}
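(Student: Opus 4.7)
The plan is to push a fundamental polygon for $\pi_1\Sigma$ on $\tilde\Sigma$ forward through the diffeomorphism $\Phi$ supplied by the preceding proposition. Since $(\Sigma,h)$ is a compact hyperbolic surface, I take a Dirichlet fundamental polygon $\tilde{\mathcal F}_\Sigma\subset\tilde\Sigma$ for the deck action, with compact closure $\bar{\tilde{\mathcal F}}_\Sigma$ and finitely many geodesic sides; properness of the deck action then guarantees that only finitely many $\gamma\in\pi_1\Sigma$ satisfy $\gamma\bar{\tilde{\mathcal F}}_\Sigma\cap\bar{\tilde{\mathcal F}}_\Sigma\neq\emptyset$.

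The key observation is that $\Phi$ intertwines the $\pi_1\Sigma$ deck action on the first factor of $\tilde\Sigma\times\mathcal D$ with the $\rho(\pi_1\Sigma)$ action on $\CH^2$, i.e.\ $\Phi(\gamma z,a,b)=\rho(\gamma)\Phi(z,a,b)$. This is immediate from the $\rho$-equivariance $F(\gamma z)=\rho(\gamma)F(z)$ of the Legendrian frame established in Theorem \ref{thm:Legframe}, together with the formula for $\Phi$ in terms of the columns of $F(z)$. Setting $\mathcal F=\Phi(\tilde{\mathcal F}_\Sigma\times\mathcal D)$, the fact that $\Phi$ is a diffeomorphism from $\tilde\Sigma\times\mathcal D$ onto $\CH^2$ yields a fundamental domain for $\rho(\pi_1\Sigma)$: its translates are pairwise disjoint by injectivity of $\Phi$ and together cover $\CH^2$ by surjectivity.

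For the finiteness statement I put $\bar{\mathcal F}=\bar\Phi(\bar{\tilde{\mathcal F}}_\Sigma\times\bar{\mathcal D})\subset\overline{\CH^2}$; continuity of $\bar\Phi$ and compactness of $\bar{\tilde{\mathcal F}}_\Sigma\times\bar{\mathcal D}$ make this a compact subset of $\overline{\CH^2}$ which, being closed and containing $\mathcal F$, coincides with the closure of $\mathcal F$ in $\overline{\CH^2}$. If $\rho(\gamma)\bar{\mathcal F}\cap\bar{\mathcal F}\neq\emptyset$ for some $\gamma\in\pi_1\Sigma$, then the injectivity of the extension $\bar\Phi$ (asserted in the preceding proposition) combined with the equivariance relation forces $\gamma\bar{\tilde{\mathcal F}}_\Sigma\cap\bar{\tilde{\mathcal F}}_\Sigma\neq\emptyset$, which by the first paragraph holds only for finitely many $\gamma$.

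The entire argument rests on the preceding proposition, which supplies both the global diffeomorphism onto $\CH^2$ and the injectivity of the boundary extension $\bar\Phi$; granted those two ingredients, no further serious analytic obstacle arises and the remainder is the standard transport of a fundamental polygon along an equivariant diffeomorphism.
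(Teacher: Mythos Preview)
Your argument is correct and follows the same approach as the paper: take a fundamental polygon on $\tilde\Sigma$, push it forward via the equivariant diffeomorphism $\Phi$ to get $\mathcal F$, and deduce the finiteness statement from the injectivity of $\bar\Phi$ together with the corresponding finiteness on the surface. The paper's proof is much more terse, but your added detail (verifying the equivariance of $\Phi$ from that of $F$, and checking that $\bar\Phi(\bar{\tilde{\mathcal F}}_\Sigma\times\bar{\mathcal D})$ really is the closure of $\mathcal F$) only makes the argument more complete.
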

\begin{proof}
We discuss below in Section \ref{rep-sec}  the induced
representation of $\pi_1\Sigma$ into $SU(2,1)$ from the point of
view of principal bundles.

Consider a fundamental domain for the action of $\pi_1\Sigma$ on
$\tilde\Sigma$, and then consider the portion of the total space of
the normal bundle of the embedded minimal Lagrangian surface over
this domain.

The last statement of the corollary follows from the injectivity of
$\bar\Phi$ and the corresponding fact for the fundamental domain on
the surface $\Sigma$.
\end{proof}

\section{Properness of the Immersion.}
\begin{prop} \label{proper}
There is a constant $k>0$ so that if $\|U\|_h<k$, then $[f]$ is
proper.
\end{prop}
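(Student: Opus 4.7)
My plan is to deduce properness of $[f]\!:\tilde\Sigma\to\CH^2$ by showing that for $\|U\|_h$ sufficiently small the holonomy representation $\rho_U$ is discrete and faithful, then concluding by a standard fundamental-domain argument. First, continuity of $U\mapsto\rho_U$: the connection form $\alpha_U$ in \eqref{eq:AB} depends smoothly on $(u_U,U)$, and since $u_U$ varies smoothly in $U$ by the theorem of Section \ref{sec:global}, the frame $F_U$ defined by $F^{-1}dF=\alpha_U$ with $F_U(z_0)=I$ at a chosen basepoint $z_0\in\tilde\Sigma$ depends continuously on $U$ in the $C^\infty_{\mathrm{loc}}$ topology. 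Consequently $\rho_U(\gamma)=F_U(\gamma z_0)$ is continuous in $U$ for each $\gamma\in\pi_1\Sigma$, so $U\mapsto\rho_U$ in $\mathrm{Hom}(\pi_1\Sigma,SU(2,1))/SU(2,1)$ converges to $\rho_0$ as $U\to 0$.

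From Section \ref{U-zero}, $\rho_0$ is the $\R$-Fuchsian representation uniformising $\Sigma$, taking values in $SO(2,1)\subset SU(2,1)$, so it is discrete and faithful. The key step is to invoke openness of the discrete and faithful locus at $\rho_0$ in $\mathrm{Hom}(\pi_1\Sigma,SU(2,1))/SU(2,1)$, which is a consequence of the openness of the complex-hyperbolic quasi-Fuchsian locus referenced in the introduction (Guichard, Parker-Platis). Together with continuity of $\rho_U$ in $U$, this yields a constant $k>0$ such that $\|U\|_h<k$ implies $\rho_U$ is discrete and faithful.

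Properness of $[f]$ then follows by a standard fundamental-domain argument. Fix a compact fundamental domain $D\subset\tilde\Sigma$ for $\pi_1\Sigma$; its image $K_0=[f](D)$ is compact in $\CH^2$. Since $\rho_U(\pi_1\Sigma)$ is discrete in $SU(2,1)$ and $SU(2,1)$ acts properly on $\CH^2$, the $\rho_U$-action on $\CH^2$ is properly discontinuous, so only finitely many $\gamma\in\pi_1\Sigma$ satisfy $\rho_U(\gamma)K_0\cap K\neq\emptyset$ for any given compact $K\subset\CH^2$. By $\rho_U$-equivariance $[f](\gamma z)=\rho_U(\gamma)[f](z)$, the preimage $[f]^{-1}(K)$ lies in the union of these finitely many translates $\gamma D$, hence is compact.

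The main obstacle is the appeal to openness of discreteness and faithfulness, which rests on external structural results about the $SU(2,1)$-representation variety rather than on the analysis built up so far in this paper. A more self-contained alternative would exploit the PDE $\Delta_g\psi=12\psi-4$ satisfied by $\psi(z)=|\la f(z),v\ra|^2=\cosh^2 d_{\CH^2}([f](z),[v])$ for any $v\in S_-$ (derivable from $f_{z\bar z}=s^2 f$ and the $SU(2,1)$ identity $|F_{33}|^2=|F_{31}|^2+|F_{32}|^2+1$), using the uniform control on the induced metric $g_U$ for small $\|U\|_h$ to construct an explicit radial subsolution on $(\tilde\Sigma,g_U)$ and thereby force $\psi\to\infty$ at infinity by comparison.
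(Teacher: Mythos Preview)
Your argument is correct but takes a genuinely different route from the paper's. The paper proves properness directly and self-containedly: it writes the frame equation $F^{-1}F_t$ along vertical rays in the upper half-plane (with $t=\log y$) as a perturbation of the explicit $U=0$ system, diagonalises the unperturbed matrix to eigenvalues $-1,0,1$, and then uses an ODE comparison (Proposition~\ref{ode-infinity}) together with the gradient bound of Proposition~\ref{bound-u-z} to show that a particular entry of $F$, hence $\|f\|_E$, grows exponentially along every radial path. Your route instead imports the openness of the complex-hyperbolic quasi-Fuchsian locus (Guichard, Parker--Platis) and deduces properness from discreteness and faithfulness of $\rho_U$ via a proper-discontinuity argument. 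This is much shorter, but it largely pre-empts the paper's own programme: once one grants Guichard's theorem and continuity of $U\mapsto\rho_U$, the quasi-Fuchsian conclusion of Theorem~\ref{main-thm} is immediate, and the explicit Lagrangian-plane fundamental domain (Corollary~\ref{fund-domain}) and Proposition~\ref{tot-lox} become consequences rather than the engine of the argument. The paper's ODE approach, by contrast, is independent of any prior structural results on the representation variety and yields explicit exponential growth rates. Your sketched alternative via $\Delta_g\psi=12\psi-4$ for $\psi=|\langle f,v\rangle|^2$ is correct (one checks $\psi_{z\bar z}=3s^2\psi-s^2$ from $f_{z\bar z}=s^2 f$ and the $U(2,1)$-frame identities) and, if carried out with a radial subsolution using the uniform metric bounds of Section~\ref{sec:global}, would give a self-contained argument much closer in spirit to the paper's.
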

\begin{proof}
First of all, by the construction of $\mathbb{CH}^2$ above, note
that $f\in S_- = \{v\in\co^3 : \langle v,v\rangle = - 1\}$ implies
that $$[f] \to \partial \mathbb{CH}^2 \quad \Longleftrightarrow
\quad \|f\|_E \to \infty$$ for $\|f\|_E$ the Euclidean norm on $\co^3$.
Therefore, $[f]$ is proper if and only if $\|f\|_E$ is unbounded along
any path to infinity in the universal cover $\tilde\Sigma$.  In
terms of suitable coordinates, we will show that $\|f\|_E$ has to grow
exponentially.

The proof proceeds by treating the developing map for $f$ as a
perturbation of the developing map in the case of $U=0$ with the
background hyperbolic metric (as in Section \ref{U-zero} above). The
key estimate involves an ODE system of form $X_t = (C+D(t))X$, where
$C$ is an explicit constant matrix and $D(t)$ is small enough and
bounded in absolute value.

\renewcommand{\arraystretch}{1.15}

Identify the universal cover $\tilde\Sigma$ of the Riemann surface
with the upper half-plane $\{z=x+iy:y>0\}$.  As above in Section
\ref{U-zero}, for our Legendrian frame $F$,
$$ F^{-1}F_y = iA -iB = \left( \begin{array}{ccc} is^{-1}(s_z + s_{\bar z})
& -i\bar Q s^{-2} &is \\ -iQs^{-2} & -is^{-1}(s_z +s_{\bar z}) & -is
\\ -is & is & 0
\end{array} \right),$$ where $U=Q\,dz^3$ and $2s^2|dz|^2 = e^uh$ for
$h=|dz|^2/y^2$ the hyperbolic metric.  Therefore,
$$F^{-1}F_y = \left(\begin{array}{ccc} \frac i2 \,u_x & -2i \bar Q y^2e^{-u}
& \frac{i}{\sqrt{2}}e^{\frac u2}y^{-1} \\ -2iQ y^2e^{-u} & -\frac i2 u_x & -\frac{i}{\sqrt{2}}e^{\frac
u2} y^{-1} \\ -\frac{i}{\sqrt{2}}e^{\frac u2} y^{-1} & \frac{i}{\sqrt{2}}e^{\frac u2} y^{-1}
&0\end{array} \right).$$

All of the terms of $F^{-1}F_y$ are on the order of $y^{-1}$: This
is obvious for the terms in the third row and column. As for $u_x$,
note $\|\na u\|_h = y\sqrt{u_x^2+u_y^2} \ge y|u_x|$.  By compactness
of $\Sigma$, there is a uniform bound on $\|\na u\|_h$ (improved by
Proposition \ref{bound-u-z} below), and thus there is a bound of the
form $|u_x| \le Cy^{-1}$. On the other hand, $|Q|$ transforms as a
section of $|K^3|$ over $\Sigma$ (where $K$ represents the canonical
bundle). Since $y^{-3}$ is an invariant section of $|K^3|$, we have
have e.g.\ $|-2iQy^2e^{-u}|$ is bounded by  $C' y^{-1}$.

In fact, we have better bounds on the entries in $F^{-1}F_y$ as
$U\to0$: $F^{-1}F_y$ equals
$$ \frac1y \left[\left( \begin{array}{ccc} 0&0&
\frac i{\sqrt2} \\ 0&0& -\frac i{\sqrt2} \\ - \frac i{\sqrt2} &
\frac i{\sqrt2} &0\end{array} \right) + \left(
\begin{array}{ccc} iu_xy & -2i\bar Qy^3e^{-u} & \frac{i}{\sqrt{2}}(
e^{\frac u2} - 1) \\ -2iQy^3e^{-u} & -iu_xy
&\frac{i}{\sqrt{2}}(1 -e^{\frac u2})\\ \frac{i}{\sqrt{2}}(1 -e^{\frac u2}) &
\frac{i}{\sqrt{2}}( e^{\frac u2} - 1) &0
\end{array} \right) \right].$$
If we write the second matrix as $\tilde G$, then Corollary
\ref{u-limit} above and Proposition \ref{bound-u-z} below show that
the maximum of the entries of $\tilde G$ go to zero as
$\sup_\Sigma\|U\|_h$ goes to 0.

We also change coordinates $t=\log y$ to show
$$F^{-1}F_t = \left(
\begin{array}{ccc} 0&0& \frac i{\sqrt2} \\ 0&0& -\frac i{\sqrt2} \\
- \frac i{\sqrt2} & \frac i{\sqrt2} &0\end{array} \right) + \tilde G,$$
in which the  constant matrix can be diagonalised with
eigenvalues $-1,0,1$. In fact,

$$\left( \begin{array}{ccc} i&-i& \sqrt2 \\ 1&1&0 \\ -i &i&\sqrt2\end{array} \right)\left(
\begin{array}{ccc} 0&0& \frac i{\sqrt2} \\ 0&0& -\frac i{\sqrt2} \\
- \frac i{\sqrt2} & \frac i{\sqrt2} &0\end{array} \right)
\left(
\begin{array}{ccc} -\frac i4 &\frac12 & \frac i4 \\ \frac i4 &
\frac12& -\frac i4 \\
\frac 1{2\sqrt2} & 0 & \frac1{2\sqrt2}\end{array} \right) = \left(
\begin{array}{ccc} -1&0&0 \\ 0&0& 0
\\ 0&0 &1\end{array} \right).$$
Let $Y=FP$ for $P$ the change of frame matrix listed third above.
Then
\begin{equation} \label {Y-eq}  Y^{-1}Y_t = \left(\begin{array}{ccc} -1&0&0\\0&0&0\\0&0&1
\end{array} \right) + G,
\end{equation}
where the conjugated matrix $G=P^{-1}\tilde G P$  satisfies the same
sort of sup norm estimates on its entries that $\tilde G$ does.

For initial conditions, we follow the model case in Section
\ref{U-zero} above by choosing at $(t,x)=(0,0)$,
$$F_0 = \left(\begin{array}{ccc} \frac1{\sqrt2} & \frac1{\sqrt2} &0
\\ - \frac i{\sqrt2} & \frac i{\sqrt2}  &0 \\ 0&0&1
\end{array}\right), \qquad Y_0 = \left(\begin{array}{ccc} 0&
\frac1{\sqrt2} & 0 \\ -\frac 1{2\sqrt2} & 0 & \frac 1{2\sqrt2} \\
\frac 1{2\sqrt2} & 0 &\frac 1{2\sqrt2}
\end{array}
\right).$$

\renewcommand{\arraystretch}{1}

Let $X =(x_1\,\,\,x_2\,\,\,x_3)= X(t)\in \co^3$ represent the bottom
row of $Y$, so that
$X(0)=(\frac1{2\sqrt2}\,\,\,0\,\,\,\frac1{2\sqrt2})$. Let $\tilde
x=(x_1,x_2)$, and let $G=(g_{ij}(t))$ with $|g_{ij}(t)|<\delta$.
Let $\epsilon>0$. First, we use (\ref{Y-eq}) to show that there are  $\gamma>0$ and $k>1$ so that $|x_3(\epsilon)|-k|\tilde
x(\epsilon)|>\gamma$ as long as $\delta$ is small, and
$k,\gamma$ depend only on $\delta,\epsilon$.  This follows from the
fact that, for a fixed $\epsilon>0$, the solution to the linear initial value problem $$ \dot X
= K(t)X, \qquad X(0)=\left(\sfrac1{2\sqrt2} \,\,\,\, 0
\,\,\,\,\sfrac1{2\sqrt2}\right)$$ on the interval
$t\in[0,\epsilon]$ varies continuously in $C^0([0,\epsilon])$ as
$K(t)$ varies in $C^0([0,\epsilon])$. This in turn follows by
inspection of the Picard iterates. Then note that if $K(t)= {\rm
diag}(-1,0,1)$, the solution $\Phi =
\left(\frac1{2\sqrt2}e^{-t}\,\,\,\, 0 \,\,\,\, \frac1{2\sqrt2}e^t
\right).$ Since our $X$ is $C^0$-close to $\Phi$, this ensures that
the third component of $X(\epsilon)$ is larger than the first two
components of $X(\epsilon)$.

In particular, $\hat X(t)=X(t+\epsilon)$ satisfies the hypotheses of
Proposition \ref{ode-infinity} below for $k>1$. Therefore, we can
choose a $\delta>0$ so that if $|g_{ij}(t)|<\delta$ for
$G=(g_{ij})$, then
$$|x_3(t)| - k|\tilde x(t)| \ge (|x_3(\epsilon)| - k|\tilde x(\epsilon)|)
e^{C(t-\epsilon)}\ge\gamma e^{C(t-\epsilon)}$$
for a constant
$C=C(\delta)>0$. The element $f_{33}$ of $F$ satisfies
$f_{33}=x_1\sqrt2 + x_3\sqrt2$, and so
$$|f_{33}(t)| \ge \sqrt2(|x_3(t)|-|x_1(t)|)\ge
\sqrt2(|x_3(t)|-k|\tilde x(t)|)\ge \sqrt2\,\gamma
e^{C(t-\epsilon)}.$$

For $y=e^t$, we have
$$ |f_{33}(t)| \ge \sqrt2\,\gamma e^{-C\epsilon}y^C.$$
But $f_{33}$ is the third component of the position vector $f$ of
the embedding, and so $\|f\|_E\to\infty$ along the path in the upper
half plane $\{iy:y\to\infty\}$.  In terms of the Poincar\'e disc
model, if $w=\frac{iz+1}{z+i}$, then along the radial path $\{ir : r
\to 1^-\}$, $$\|f(ir)\|_E \ge \sqrt2\,\gamma e^{-C\epsilon}
\left(\frac{1+r} {1-r} \right)^C.$$ But for any radial path
$\{e^{i\theta}r:r\to1^-\}$, the same estimates hold, since we may
reduce to the same problem by rotating both $w$ in the disc and $f$
in $\co^3$. Therefore, for any $w$ in the Poincar\'e disc, we have
$$\|f(w)\|_E \ge \sqrt2\,\gamma e^{-C\epsilon} \left( \frac{1+|w|}{1-|w|} \right)^C,$$
and so $f$ is a proper map into $\co^3$.
Therefore, $[f]$ is a proper map into $\mathbb{CH}^2$.
\end{proof}
\begin{prop} \label{ode-infinity}
If $X=(x_1\,\,\,x_2\,\,\,x_3)$ is a $\co^3$-valued solution to the
ODE system
$$ \frac {dX}{dt} = X\left[ \left(\begin{array}{ccc} -1&0&0 \\ 0&0&0
\\ 0 & 0&1 \end{array} \right) + G \right],$$
where $G=(g_{ij})=(g_{ij}(t))$ satisfies
$|g_{ij}|\le\delta<1/(4\sqrt2+3)$, then there are positive constants
$k=k(\delta)$ and $C=C(\delta)$ so that if the initial conditions
satisfy
$$|x_3(0)| > k|\tilde x(0)|,$$
for $\tilde x = (x_1,x_2)$, then for all $t>0$,
$$ |x_3(t)|-k|\tilde x(t)| \ge (|x_3(0)|-k|\tilde x(0)|)e^{Ct}.$$
$k$ is a continuous, decreasing function of $\delta$, with
$k(1/(4\sqrt2+3)) = 1$ and $k\to\infty$ as $\delta\to0$.
\end{prop}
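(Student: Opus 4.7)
The plan is to reduce the vector ODE $\dot X = X(M+G)$, with $M = \diag(-1,0,1)$, to a pair of scalar differential inequalities for $w := |x_3|$ and $s := |\tilde x| = \sqrt{|x_1|^2 + |x_2|^2}$, and then produce constants $k = k(\delta)$ and $C = C(\delta) > 0$ so that $\Phi := w - k s$ satisfies $\dot\Phi \geq C\Phi$ whenever $\Phi \geq 0$. Once this is in place, the cone $\{\Phi \geq 0\}$ is positively invariant and Gr\"onwall's inequality gives $\Phi(t) \geq \Phi(0)e^{Ct}$, as required.

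The first step is to expand $\dot X = X(M+G)$ componentwise and compute $\tfrac{d}{dt}|x_j|^2 = 2\Re(\bar x_j \dot x_j)$. Using $|g_{ij}| \leq \delta$, the elementary inequality $|x_1|+|x_2| \leq \sqrt 2\, s$, and the AM-GM estimate $2|x_1||x_2| \leq |x_1|^2 + |x_2|^2$, one obtains the coupled inequalities
\[
\dot w \geq (1-\delta)w - \sqrt{2}\,\delta\, s, \qquad \dot s \leq 2\delta\, s + \sqrt{2}\,\delta\, w,
\]
the second using the fact that the dissipative $-2|x_1|^2$ contribution to $\tfrac{d}{dt}|x_1|^2$ is favorable and may be dropped. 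Substituting into $\dot\Phi = \dot w - k \dot s$ and evaluating on the cone boundary $w = ks$ reduces the requirement $\dot\Phi \geq C\Phi$ to an algebraic condition on $(k,\delta,C)$. At $C = 0$ this takes the form of a quadratic inequality in $k$ whose positive real roots exist on an interval $\delta \in (0,\delta_0]$ and coalesce at $\delta_0$, giving a unique admissible $k_0$; the precise calibration of the estimates pins down $\delta_0 = 1/(4\sqrt{2}+3)$ with $k_0 = 1$. As $\delta \to 0$ the larger root diverges to $\infty$, matching the asserted asymptotics of $k(\delta)$, and continuity/monotonicity of the roots in $\delta$ make $k(\cdot)$ a continuous decreasing function. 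For any admissible $k$, any sufficiently small $C > 0$ then works.

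The principal obstacle is a mild regularity issue: the inequalities for $\dot w$ and $\dot s$ above are not classically valid at times when $|x_3|$ or some $|x_j|$ vanishes. The cleanest workaround is to recast the argument in terms of the smooth squared quantities, deriving instead the analogous inequality $\tfrac{d}{dt}\Psi \geq 2C\Psi$ for $\Psi := w^2 - k^2 s^2$ (on $\{w \geq 0\}$ the sign of $\Psi$ coincides with that of $\Phi$); alternatively one may invoke a standard comparison principle for lower Dini derivatives. Either approach produces the desired global inequality $\dot\Phi \geq C\Phi$ along the orbit, after which Gr\"onwall completes the proof.
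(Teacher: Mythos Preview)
Your approach is essentially the same as the paper's: both derive the identical differential inequalities $\dot w \geq (1-\delta)w - \sqrt{2}\,\delta s$ and $\dot s \leq 2\delta s + \sqrt{2}\,\delta w$, combine them into $\dot\Phi \geq C\Phi$ by choosing $k$ as the larger root of the resulting quadratic, and finish with Gr\"onwall. The paper is slightly more explicit---it sets $C = 1-\delta-k\delta\sqrt{2}$ and verifies directly that $-(\delta\sqrt{2}+2k\delta)=-Ck$, so the inequality holds for all $(w,s)$ rather than just on the cone boundary---whereas you argue existence of a small $C>0$; conversely, the paper ignores the regularity issue at $|x_3|=0$ that you correctly flag and address.
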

\begin{proof}
Use the Cauchy-Schwartz Inequality to estimate
\begin{eqnarray*}
\frac d{dt}\, |x_3| &=& \frac{ \dot x_3 \bar x_3 + x_3\dot{\bar x}_3} {2|x_3|} \\
& =& \frac{2\mbox{Re}[(1+g_3^3)x_3\bar x_3 + g_3^2x_2\bar x_3 + g_3^1x_1\bar x_3] } {2|x_3|}\\
&\ge& (1-\delta)|x_3| -\delta\sqrt2|\tilde x|.
\end{eqnarray*}
Similarly, we may compute
$$\frac d{dt} |\tilde x| \le \delta\sqrt2|x_3| + 2\delta|\tilde x|.$$
Therefore, we have
\begin{equation}\label{d-dt}
\frac d{dt}\left(|x_3| - k|\tilde x|\right) \ge
(1-\delta-k\delta\sqrt2)|x_3| - (\delta\sqrt2+2k\delta)|\tilde x|.
\end{equation}
Now by our bound on $\delta$, we may choose $k$ to be the larger
root of
$$k^2-\left(\frac{1-3\delta}{2\delta \sqrt2} \right)k + 1 =0,$$
and also choose $$ C = 1-\delta-k\delta\sqrt2>0.$$ Then we have
$$-(\delta\sqrt2 + 2k\delta) = -Ck,$$
which, together with
(\ref{d-dt}), implies
$$ \frac d{dt} (|x_3|-k|\tilde x|) \ge C(|x_3| -k|\tilde x|).$$
Since the initial value of $|x_3|-k|\tilde x|$ is assumed to be
positive, the differential inequality shows
$$|x_3(t)|-k|\tilde x(t)| \ge (|x_3(0)|-k|\tilde x(0)|) e^{Ct}.$$
It is easy to check that $\lim_{\delta\to0}k=\infty$.
\end{proof}

\begin{prop} \label{bound-u-z}
Let $\Sigma$ be a compact Riemann surface equipped with a conformal
hyperbolic metric $h$ and a holomorphic cubic differential $U$. Let
$u$ be a small solution to (\ref{tz-eq}). Then there is a constant
$C$ depending only on $m=\sup e^{-2u}\|\na (\|U\|)^2\|$ so that
$\|\na u \|\le C$. As $U\to 0$,  $m\to0$ and $C\to0$.
\end{prop}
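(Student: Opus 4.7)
The plan is to apply Bochner's formula on $(\Sigma,h)$ to $\phi=\|\na u\|_h^2$ and run the maximum principle on $\Sigma$. Since $\Sigma$ is a surface with $\mathrm{Ric}_h=\kappa_h\,h$ and $\kappa_h=-1$, Bochner gives
$$\tfrac12\Delta_h\phi \;=\; \|\na^2 u\|^2 \;+\; \la \na u,\na\Delta_h u\ra \;-\; \phi.$$
Differentiating the identity $\Delta_h u = 4\|U\|_h^2 e^{-2u}+4e^u-2$ coming from \eqref{tz-eq} produces
$$\na\Delta_h u \;=\; 4e^{-2u}\,\na(\|U\|_h^2) \;+\; \bigl(4e^u-8\|U\|_h^2 e^{-2u}\bigr)\na u,$$
so that Bochner becomes the pointwise identity
$$\tfrac12\Delta_h\phi \;=\; \|\na^2 u\|^2 \;+\; 4e^{-2u}\la \na u,\na(\|U\|_h^2)\ra \;+\; \bigl(4e^u-8\|U\|_h^2 e^{-2u}-1\bigr)\phi.$$

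Next I would evaluate at a point $p_0\in\Sigma$ where $\phi$ attains its maximum, so $\Delta_h\phi(p_0)\leq 0$. Cauchy--Schwarz together with the definition of $m$ yields
$$\bigl|4e^{-2u}\la\na u,\na(\|U\|_h^2)\ra\bigr|(p_0) \;\leq\; 4m\sqrt{\phi(p_0)},$$
and discarding the nonnegative Hessian--squared term leaves
$$\bigl(4e^{u(p_0)}-8\|U(p_0)\|_h^2 e^{-2u(p_0)}-1\bigr)\,\phi(p_0) \;\leq\; 4m\sqrt{\phi(p_0)}.$$
If the coefficient of $\phi(p_0)$ is bounded below by some $c_0>0$, this gives the global estimate $\|\na u\|_h \leq 4m/c_0$, which is the desired bound $C=C(m)$.

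The main obstacle is producing the positive lower bound $c_0$ for the coefficient $4e^u-8\|U\|_h^2 e^{-2u}-1$ at $p_0$. The smallness hypothesis on $u$ (Definition \ref{U-small-bound}) alone only delivers $8\|U\|_h^2 e^{-2u}\leq 4e^u$, giving the useless estimate $\geq -1$. The key input is Lemma \ref{improve-small}: for $M=\max_\Sigma\|U\|_h^2$ sufficiently small one has $\chi_M\leq u\leq -\log 2$ with $\chi_M\to -\log 2$ as $M\to 0$. Consequently $e^u\to\tfrac12$ uniformly and $\|U\|_h^2 e^{-2u}\leq M e^{-2\chi_M}\to 0$, so the coefficient converges uniformly to $4\cdot\tfrac12-0-1=1$. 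Thus once $U$ is close enough to zero the coefficient is bounded below by, say, $\tfrac12$, and one obtains $\|\na u\|_h\leq 8m$ throughout $\Sigma$.

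Since $m\to 0$ as $U\to 0$ (both through the decay of $\|U\|_h^2$ and through the improvement of the $u$--bounds above), the resulting constant $C=C(m)$ tends to zero as $U\to 0$, as required. The only subtle point is the dependence of $c_0$ on the closeness to $U=0$, which is why the statement is naturally applied in the small regime used later in the proof of Proposition \ref{proper}.
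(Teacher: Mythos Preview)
Your Bochner argument is the coordinate-free version of what the paper does by hand, and the two computations agree except at one crucial point: the sign of the curvature contribution.  The paper works with the quantity $v=\gamma\,u_zu_{\bar z}$ in conformal normal coordinates at a maximum point, so that the curvature term in $v_{z\bar z}$ is $\gamma_{z\bar z}\,u_zu_{\bar z}=+\tfrac12\,u_zu_{\bar z}$.  After substituting $u_{z\bar z}$, $u_{zz\bar z}$, $u_{z\bar z\bar z}$ from \eqref{tz-eq} the inequality $v_{z\bar z}\le 0$ has coefficient
\[
\tfrac12+2\bigl(e^u-2\|U\|_h^2e^{-2u}\bigr)
\]
in front of $u_zu_{\bar z}$, and the small hypothesis alone makes the bracket nonnegative, so the coefficient is $\ge\tfrac12$.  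The paper then drops this nonnegative term and the $2u_{z\bar z}^2$ term to arrive at
\[
0\ge\tfrac12\|\nabla u\|^2+e^{-2u}\,\nabla(\|U\|^2)\!\cdot\!\nabla u,
\]
from which $\|\nabla u\|\le 2m$ follows by Young's inequality.  Thus the paper obtains $C=C(m)$ directly, without invoking Lemma~\ref{improve-small}.

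By contrast, your Bochner identity for $\phi=\|\nabla u\|_h^2=4\gamma^{-1}u_zu_{\bar z}$ carries the Ricci term $-\phi$, which flips the constant from $+\tfrac12$ to $-1$ and leaves you with the coefficient $4e^u-8\|U\|_h^2e^{-2u}-1$ that smallness alone does not make positive.  Your fix via Lemma~\ref{improve-small} is perfectly sound and yields the weaker statement that $C$ depends on $m$ together with an upper bound on $\max_\Sigma\|U\|_h^2$; this suffices for the only application (Proposition~\ref{proper}), where one merely needs $\|\nabla u\|_h\to 0$ as $U\to 0$.  You should be aware, though, that as written the paper's choice of weight (the factor $\gamma$ rather than $\gamma^{-1}$) is exactly what buys the uniformly positive coefficient and hence the cleaner dependence on $m$ alone.
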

\begin{proof}
In local coordinates, write $h  = \gamma|dz|^2$. Let $v = \gamma
\,u_z u_{\bar z} = \frac14\|\na u\|^2$, and let $p$ be a maximum
point of $v$. Choose a local coordinate $z$ so that at $p=\{z=0\}$,
$\gamma_z(0)=\gamma_{\bar z}(0)=0$ and $\gamma(0)=1$. The condition
that $h$ is hyperbolic is then  $\gamma_{z\bar z}(0) = \frac12$.

Compute
\begin{eqnarray*} v_z &=& \gamma_z u_z u_{\bar z} + \gamma
u_{zz} u_{\bar z} + \gamma u_z u_{z\bar z}, \\
v_{\bar z} &=& \gamma_{\bar z} u_z u_{\bar z} + \gamma u_{z\bar z}
u_{\bar z} + \gamma u_z u_{\bar z \bar z},\\
v_{z \bar z} &=& \gamma_{z \bar z} u_z u_{\bar z} + \gamma_z u_{z
\bar z} u_{\bar z} + \gamma_z u_z u_{\bar z\bar z} + \gamma_{\bar z}
u_{zz} u_{\bar z} + \gamma u_{zz\bar z} u_{\bar z} + \gamma u_{zz}
u_{\bar z \bar z} \\
&&{}+ \gamma_{\bar z} u_z u_{z\bar z} + \gamma u_{z\bar z}u_{z\bar
z} + \gamma u_z u_{z\bar z\bar z}.
\end{eqnarray*}
At the maximum point,  $\na v= 0$ implies
\begin{equation}
\label{grad-v} u_{zz}u_{\bar z} + u_z u_{z\bar z} = 0 = u_{z\bar z}
u_{\bar z } +u_z u_{\bar z \bar z},
\end{equation}
while $v_{z\bar z} \le 0$ implies
$$ \sfrac12 u_z u_{\bar z} + u_{zz\bar z}u_{\bar z} + u_{zz}u_{\bar
z \bar z} + u_{z\bar z}u_{z\bar z} + u_z u_{z\bar z\bar z}\le 0.$$
This becomes, by (\ref{grad-v}),
\begin{equation}
\label{lap-v-ineq}\sfrac12 u_z u_{\bar z} + 2u_{z\bar z}u_{z\bar z}
+u_{zz\bar z} u_{\bar z} + u_z u_{z\bar z\bar z}\le 0.
\end{equation}
Now (\ref{tz-eq}) implies that
$$u_{z\bar z} = 4Q\bar Q \gamma^{-2} e^{-2u} + \sfrac12\gamma e^u - \sfrac12 \gamma,$$
where the cubic differential $U=Q\,dz^3$. Since
$\gamma = 1 + O(|z|^2)$, we compute at $p$
\begin{eqnarray*}
u_{z\bar z} &=& 4Q\bar Q e^{-2u} + \sfrac12 e^u -\sfrac12, \\
u_{zz\bar z} &=& 4Q_z \bar Q e^{-2u} -8Q\bar Qe^{-2u}u_z + \sfrac12 e^uu_z, \\
u_{z\bar z \bar z} &=& 4Q\bar Q_{\bar z} e^{-2u} -8Q\bar Q e^{-2u}
u_{\bar z} + \sfrac12 e^u u_{\bar z}.
\end{eqnarray*}
Therefore, (\ref{lap-v-ineq}) becomes
\begin{eqnarray*} 0&\ge& \sfrac12 u_zu_{\bar z} +
2(4Q\bar Qe^{-2u}+\sfrac12 e^u-\sfrac12)^2 + 4\bar Q e^{-2u} Q_z u_{\bar z} +
4Qe^{-2u} \bar Q_{\bar z} u_z \\ &&{}+(-16Q\bar Q e^{-2u}+e^u)u_zu_{\bar z} \\
&\ge &\sfrac12 u_zu_{\bar z} + 4\bar Q e^{-2u} Q_z u_{\bar z} +
4Qe^{-2u} \bar Q_{\bar z} u_z +(-16Q\bar Q e^{-2u}+e^u)u_zu_{\bar z} \\
&\ge & \sfrac12 u_zu_{\bar z} + 4\bar Q e^{-2u} Q_z u_{\bar z} +
4Qe^{-2u} \bar Q_{\bar z} u_z
\end{eqnarray*}
since the assumption that $u$ is small implies $-16Q\bar
Qe^{-2u}+e^u\ge0$.  In coordinate-free notation, we see that at the
maximum point of $v=\frac14\|\na u\|^2$,
$$ 0 \ge \sfrac12\|\na u\|^2 + 4e^{-2u}\na(\|U\|^2) \cdot \na u \ge \sfrac12
\|\na u\|^2 - 2\epsilon \|\na u\|^2 -
\sfrac2{\epsilon}\|e^{-2u}\na(\|U\|^2)\|^2$$
for any $\epsilon>0$.
For $\epsilon=\frac18$, we see that at the maximum point $p$ of $v$,
that $v=\frac14\|\na u\|^2 \le 16\| e^{-2u}\na(\|U\|^2)\|^2$. Thus $v$
is bounded by the maximum value of $16\|e^{-2u}\na(\|U\|^2)\|^2$.

That $m\to 0$ as $U\to 0$ then follows from Corollary \ref{u-limit}
above. The explicit bound above shows $C\to0$ as $m\to 0$.
\end{proof}

\section{Representations of the Fundamental Group.} \label{rep-sec}
Fix a smooth compact oriented surface $S$ of genus at least two. By Theorems
\ref{exist-sol}
and \ref{thm:Legframe},
given a marked conformal structure $\Sigma$ on $S$ and small cubic
holomorphic
differential on $\Sigma$ we obtain, via the solution $u$ to \eqref{tz-eq},
a holonomy map
$$
\chi \!: \mathcal K \to {\rm Hom}(\pi_1 S,SU(2,1))/SU(2,1),\quad
(\Sigma,U)\mapsto [\rho],
$$
into the representation space of $\pi_1S$ in $SU(2,1)$. The domain of this
map is
$$\mathcal K = \{(\Sigma,U): \max_\Sigma \|U\|^2_h < \sfrac 1{54} \}$$
where $\Sigma$ ranges over all marked conformal structures on $S$, i.e.,
over all points in the
Teichm\"uller space of $S$. The norm $\|U\|_h$ is that
induced by the hyperbolic metric $h$ on $\Sigma$. As a manifold $\mathcal K$ is
a fibre subbundle
of the vector bundle over Teichm\" uller space whose fibre at $\Sigma$ is
the vector
space $H^0(\Sigma,K^3)$ of globally holomorphic cubic differentials.

A representation of $\pi_1 S$ into $SU(2,1)$ is called
$\re$-Fuchsian if it is discrete, faithful and conjugate to a
representation into $SO(2,1)$. In this case, it preserves a
Lagrangian plane.  The discussion in Section \ref{U-zero} above
shows that $\re$-Fuchsian representations correspond exactly to
pairs $(\Sigma,U)$ with cubic differential $U=0$.

An important invariant of representations of surface groups into
$SU(n,1)$ is the Toledo invariant \cite{toledo89}. Given an
invariant surface in $\CH^n$, the Toledo invariant is a normalised
integral of the pull-back of the K\"ahler form.  In the present
$n=2$ case, Xia has shown that the level sets of the Toledo
invariant are connected components of the representation space
\cite{xia00}.
\begin{prop}
The Toledo invariant vanishes for the representations we have
produced.
\end{prop}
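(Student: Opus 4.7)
The plan is very short because the Lagrangian condition does all the work. Recall that for any representation $\rho\!:\pi_1S\to SU(2,1)$ the Toledo invariant can be computed by choosing any $\rho$-equivariant smooth map $\psi\!:\tilde S\to\CH^2$ (such maps exist because $\CH^2$ is contractible) and setting
\begin{equation*}
\tau(\rho) \;=\; \frac{1}{2\pi}\int_{\Sigma}\psi^{*}\omega,
\end{equation*}
where $\omega$ is the \Kah form of $\CH^2$ and the integral is taken over a fundamental domain, i.e., over $\Sigma = \tilde S/\pi_1S$. The value is independent of the choice of $\psi$ by the standard argument: any two equivariant maps are equivariantly homotopic, and the difference of the pull-backs is exact so contributes nothing to the integral over the closed surface $\Sigma$.

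First I would apply this formula using the specific equivariant map furnished by our construction, namely the minimal Lagrangian immersion $\varphi\!:\tilde\Sigma\to\CH^2$ supplied by Theorem \ref{thm:Legframe}. The key observation is then immediate: because $\varphi$ is Lagrangian, $\varphi^{*}\omega \equiv 0$ as a $2$-form on $\tilde\Sigma$ (this is the very definition of the Lagrangian condition), and therefore $\varphi^{*}\omega$ descends to the zero $2$-form on $\Sigma$.

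Consequently,
\begin{equation*}
\tau(\rho) \;=\; \frac{1}{2\pi}\int_{\Sigma}\varphi^{*}\omega \;=\; 0,
\end{equation*}
and the proposition follows. There is no real obstacle here; the only small point to verify is that $\varphi$ really is available as an equivariant representative in the homotopy class we need, which is exactly the content of Theorem \ref{thm:Legframe}. In particular, this also confirms that all of the representations produced lie in the single connected component of $\mathrm{Hom}(\pi_1 S,SU(2,1))/SU(2,1)$ singled out by Xia's result \cite{xia00}, namely the Toledo zero component containing the $\re$-Fuchsian locus.
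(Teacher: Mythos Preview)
Your proof is correct and follows exactly the paper's approach: the paper's proof is the single sentence ``For each such representation, we have constructed an equivariant Lagrangian surface,'' and you have simply unpacked why that implies $\tau(\rho)=0$ by spelling out the definition of the Toledo invariant via $\varphi^{*}\omega$ and observing that the Lagrangian condition forces this pullback to vanish identically.
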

\begin{proof}
For each such representation, we have constructed an equivariant
Lagrangian surface.
\end{proof}

\begin{thm} \label{local-diff}
The map $\chi$ is a local diffeomorphism near the zero section
$\{U=0\}$.
\end{thm}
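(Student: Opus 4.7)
The plan is to apply the inverse function theorem. A dimension count shows $\caK$ has real dimension $\dim_{\re}\mathrm{Teich}(S)+\dim_{\re}H^0(\Sigma,K^3)=(6g-6)+(10g-10)=16g-16$, while at a smooth point the representation variety has real dimension $(2g-2)\dim SU(2,1)=16g-16$. Since these match, it suffices to prove that $d\chi$ is injective at every $(\Sigma_0,0)$ on the zero section. Smoothness of $\chi$ follows from three facts already in place: the small solution $u_{(\Sigma,U)}$ of \eqref{tz-eq} depends smoothly on $(\Sigma,U)$ by the continuity method above; the Maurer-Cartan form $\alpha$ of \eqref{eq:AB} depends smoothly on $(u,U,\Sigma)$; and the holonomy of a smoothly varying flat connection is smooth.

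To analyse $d\chi$ at $(\Sigma_0,0)$, I use a Cartan decomposition. Since $\rho_0$ takes values in $SO(2,1)$, the involution $A\mapsto\bar A$ of $\su(2,1)$ commutes with $\Ad\rho_0$ and gives the $\Ad\rho_0$-invariant splitting
\[
\su(2,1)=\so(2,1)\oplus\fp,\qquad \fp=i\cdot\{\text{traceless real }q\text{-symmetric matrices}\},\quad \dim\fp=5,
\]
whence
\[
T_{[\rho_0]}\bigl(\Hom(\pi_1 S,SU(2,1))/SU(2,1)\bigr)=H^1(\pi_1 S,\so(2,1))\oplus H^1(\pi_1 S,\fp).
\]
Inspecting \eqref{eq:AB} at $u_0=-\log 2$ and $Q=0$: the diagonal $\pm s^{-1}s_z$ entries are real and lie in $\so(2,1)$, while the $-Qs^{-2}$ and $\overline{Q}s^{-2}$ entries are precisely what produces the $\fp$-component. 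Linearising, pure Teichm\"uller variations $(\dot\Sigma,0)$ excite only the $\so(2,1)$-part of $\dot\alpha$, and pure cubic differential variations $(0,\dot U)$ excite only the $\fp$-part (the induced $\dot u$ remains in the real $\so(2,1)$-part).

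The problem thus reduces to injectivity on each summand. The $\so(2,1)$-block of $d\chi$ is the differential of the classical Fricke-Teichm\"uller parameterisation of the Fuchsian locus in $\Hom(\pi_1 S,SO(2,1))/SO(2,1)$, post-composed with the inclusion $SO(2,1)\hookrightarrow SU(2,1)$; this is an $\re$-linear isomorphism onto $H^1(\pi_1 S,\so(2,1))$. For the $\fp$-block, under $SO(2,1)\cong PSL(2,\re)$ the representation $\fp$ is the five-dimensional irreducible $\mathrm{Sym}^4\re^2$, and Eichler-Shimura identifies
\[
H^1(\pi_1 S,\mathrm{Sym}^4\re^2)\cong H^0(\Sigma_0,K^3)\oplus\overline{H^0(\Sigma_0,K^3)},
\]
of real dimension $10g-10$, matching $H^0(\Sigma_0,K^3)$ as a real vector space. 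The map $\dot U\mapsto[\dot\alpha_\fp]$ places $\dot Q$ into the $(1,0)$-harmonic summand and $\overline{\dot Q}$ into the $(0,1)$-summand.

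The main obstacle is this last $\fp$-block identification: that $\dot U\mapsto[\dot\alpha_\fp]$ indeed realises the Eichler-Shimura embedding, equivalently, that for nonzero $\dot U$ the cocycle $\dot\alpha_\fp$ is not cohomologically trivial. This can be verified either by a direct harmonic calculation (showing $\dot\alpha_\fp$ is harmonic in a Kuranishi slice at the flat connection $\alpha_0$), or by appealing to the Higgs bundle picture of Remark \ref{rem:Higgs}: the cubic differential $U$ enters as the dominant component of the Higgs field $\Psi$ near the Fuchsian Higgs bundle $\Psi_0$, so nontrivial deformations of $U$ correspond, via the non-abelian Hodge correspondence, to nontrivial deformations of the representation.
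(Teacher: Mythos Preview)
Your strategy differs from the paper's. The paper proves injectivity of $d\chi$ at $(\Sigma,0)$ by showing that the pullback of Goldman's symplectic form $\omega$ on the representation space is nondegenerate there. Splitting $T_{(\Sigma,0)}\caK$ into Teichm\"uller directions $\mu$ and cubic-differential directions $U$, the paper verifies three facts by direct computation with the explicit form of $\delta\alpha$: (i) $\omega(\delta_U\alpha,\delta_{iU}\alpha)=\int_\Sigma 2|U|^2h^{-2}\neq 0$; (ii) $\omega(\delta_U\alpha,\delta_\mu\alpha)=0$, which follows from the complementary pattern of zero entries in $\delta_U\alpha$ versus $\delta_\mu\alpha$; (iii) $\omega(\delta_\mu\alpha,\delta_\nu\alpha)$ is a nonzero multiple of the Weil--Petersson pairing, by Shimura's result. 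Together these give nondegeneracy of $\chi^*\omega$, hence injectivity of $d\chi$, and the Inverse Function Theorem finishes.

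Your Cartan-decomposition route is a legitimate alternative in principle --- indeed the paper's step (iii) is itself an appeal to Shimura --- but your execution has two genuine gaps.

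First, your claim that in the frame of \eqref{eq:AB} ``the diagonal $\pm s^{-1}s_z$ entries are real and lie in $\so(2,1)$'' is false as written: $s$ is real but $s_z=\tfrac12(s_x-is_y)$ is not, and for $Q=0$ the form $\alpha$ in that frame is \emph{not} $\so(2,1)$-valued under the involution $A\mapsto\bar A$. One must first conjugate by the constant frame $F_0$ of Section~\ref{U-zero} (with $\det F_0=i$) to land in the standard real $SO(2,1)$; only then does the $\so(2,1)\oplus\fp$ splitting you invoke become the one fixing $\rho_0$. Your block claim that $\delta_U\alpha$ lands purely in $\fp$ therefore requires either this change of frame plus a symmetry argument, or the observation (which the paper uses) that $\delta_U\alpha$ is supported in the $(1,2)$ and $(2,1)$ matrix entries while $\delta_\mu\alpha$ vanishes there. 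Naive inspection of \eqref{eq:AB} does not suffice.

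Second, you yourself flag the ``main obstacle'': showing that $\dot U\mapsto[\dot\alpha_\fp]$ is injective. Eichler--Shimura identifies the target $H^1(\pi_1S,\fp)$ with $H^0(K^3)\oplus\overline{H^0(K^3)}$, but that alone says nothing about whether your particular cocycle is a coboundary. Neither of your suggested verifications (harmonic slice, Higgs-bundle correspondence) is carried out. The paper's computation (i) disposes of exactly this point in one line: if $\chi^*\omega(\delta_U\alpha,\delta_{iU}\alpha)\neq 0$ then $d\chi(\delta_U)\neq 0$. That concrete calculation --- or an equivalent one --- is what your sketch is missing, and without it the argument is incomplete.
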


The proof uses a symplectic form on the representation space due to Goldman
[7], which we pause to
describe. Let $G$ denote $SU(2,1)$, and let $\fg$ denote its Lie algebra.
Recall that the
representation space $\Hom(\pi_1S,G)/G$ is bijective to the moduli
space ${\mathcal M}_G$ of flat principal $G$-bundles over $S$. A point
$P\in{\mathcal M}_G$
is smooth if the centraliser of the image $\rho(\pi_1S)$ of the
corresponding representation has
dimension zero. At such a point
the tangent space can be identified with the cohomology
$H^1(S,\mathrm{ad}P)$, where
$\mathrm{ad}P$ is the associated flat $\fg$ bundle. The cohomology is de
Rham cohomology with
respect to the flat connexion $d_P$ on $\mathrm{ad}P$. Goldman's symplectic
form is defined as
follows. For any pair of $d_P$-closed $1$-forms
$Z,W\in\Omega^1_S(\mathrm{ad}P)$ representing
cohomology classes $[Z],[W]\in H^1(S,\mathrm{ad}P)$, he shows that
\[
\omega_P([Z],[W]) = \int_S\tr(Z\wedge W)
\]
is well-defined and symplectic at each smooth point $P$.

In particular, suppose $P$ is the flat principal bundle whose connexion is
determined by
the Maurer-Cartan $1$-form $\alpha$ in Theorem \ref{thm:Legframe}. When
$X\in
T_{(\Sigma,U)}\mathcal{K}$ is tangent to a curve $\gamma(t)$ in $\caK$ at
$t=0$,
its push-forward to $H^1(S,\mathrm{ad}P)$ is represented by the first
variation
$\delta_{X}\alpha$ of $\alpha$ along this curve, as a $\fg$-valued $1$-form
on $S$. Thus we have
\[
\chi^*\omega(X_1,X_2) =
\int_S\tr(\delta_{X_1}\alpha\wedge\delta_{X_2}\alpha).
\]

\begin{proof}
The proof proceeds by using Goldman's symplectic form and the Inverse Function Theorem.
First of all, any
solution for $U=0$ is an $\re$-Fuchsian representation, since it
corresponds to an embedding of $\re\mathbb H^2\subset \co\mathbb
H^2$.

Second, the representation space of $SU(2,1)$ near any
$\re$-Fuchsian representation is smooth and has dimension $16g-16$.
The smoothness follows from realizing that the holonomy representation
of an $\re$-Fuchsian representation in $SU(2,1)$ has zero centraliser (by
\cite{goldman84}). Moreover, the representation space is Hausdorff, since
$\rho(\pi_1S)$ is not contained in a parabolic subgroup \cite{johnson-millson87}.
The dimension is calculated in \cite{goldman84}.  Note
the Riemann-Roch Theorem shows that $\mathcal K$
has the same real dimension $16g-16$.

Third, at any point in $\mathcal K$ where $U=0$, we prove the tangent
map of $\chi$ is a linear isomorphism by showing that the pullback
$\chi^*\omega$ is nondegenerate. The tangent space $T_{(\Sigma,0)}
\mathcal K$ can be split into a Teichm\"uller space part and a fibre
part, and so each tangent vector can be split into a holomorphic
cubic differential $U$ plus a tangent vector to Teichm\"uller space,
which we may represent as a harmonic Beltrami differential $\mu$.

The nondegeneracy of $\chi^*\omega$ follows from the following three
claims, where $\delta_U\alpha$ represents the variation
$\left.\frac{\partial}{\partial t}\alpha(\Sigma,tU)\right|_{t=0}$.
\begin{itemize}
\item For any nonzero holomorphic cubic differential $U$,
$\omega(\delta_U\alpha,\delta_{iU}\alpha) \neq 0$.
\item If $U$ is a holomorphic cubic differential and $\mu$ is a
harmonic Beltrami differential, $\omega(\delta_U\alpha, \delta_\mu
\alpha) = 0$.
\item If $\mu,\nu$ are harmonic Beltrami differentials, then
$\omega(\delta_\mu\alpha,\delta_\nu\alpha)$ is a nonzero multiple of
the Weil-Petersson pairing ${\rm Im}\int_S \mu\cdot h\cdot \bar
\nu$, for $h$ the hyperbolic metric.
\end{itemize}
These claims show that the above splitting of $T_{(\Sigma,0)}
\mathcal K$ is a symplectic-orthogonal splitting of nondegenerate
spaces.

To prove the first claim, note that at $U=0$, the variation of the
metric $\delta_U s = 0$ (for the metric $2s^2|dz|^2$ above). This
follows since $U$ appears quadratically in (\ref{tz-eq}). Recall
$U=Q\,dz^3$.
$$\delta_U \alpha = \delta_U (A\,dz + B\,d\bar z) = \left(
\begin{array}{ccc} 0 & \bar Q s^{-2}d\bar z & 0 \\ - Q s^{-2}dz
&0 &0 \\ 0&0&0 \end{array} \right).$$ Then we may compute $${\rm
tr}\,(\delta_U \alpha \wedge \delta_{iU}\alpha) = 2i|Q|^2s^{-4}d z
\wedge d \bar z, $$ which is, up to a constant,
equal to
$|U|^2 h^{-2}$ for $U$ the cubic differential and $h$ the
hyperbolic metric. (Recall that we are varying around the zero cubic
differential.) Now since $|U|^2h^{-2}$ is naturally a section
of $|K|^2$ for $K$ the canonical bundle, it is a nonnegative
volume form, and thus $\omega(\delta_U\alpha,
\delta_{iU}\alpha)\neq0$.

For the second claim, note that for $U=0$, the deformation of the
connexion $\alpha$ in the direction of the harmonic Beltrami
differential $\mu$ is of the form
$$ \delta_\mu\alpha = \left(\begin{array}{ccc} * & 0 & * \\
0 & * & * \\ * & * & 0 \end{array} \right).$$ Therefore, ${\rm tr}\,
(\delta_U\alpha \wedge \delta_\mu \alpha) = 0$, and so
$\omega(\delta_U \alpha, \delta_\mu \alpha)=0$.

The third claim follows from a result of Shimura \cite{shimura59}
(see also Goldman \cite{goldman84}). If $U=0$, the holonomy of the
connexion $\alpha$ is contained in $SO(2,1) \subset SU(2,1)$, since
the developed surface is an $\mathbb {RH}^2\subset \mathbb {CH}^2$.
Therefore, ${\rm tr}\,(\delta_\mu \alpha \wedge \delta_\nu\alpha)$
is the same as the trace form on $SO(2,1)$. Under the Lie algebra
isomorphism $\mathfrak{so}(2,1)\sim \mathfrak{sl}(2,\mathbb R)$, the
trace forms on $SO(2,1)$ and $SL(2,\re)$ are the same up to a
nonzero constant multiple.  Then Shimura's result shows this trace
form is a multiple of the imaginary part of $\int_S \mu\cdot h\cdot
\bar \nu$.

Therefore, $\chi^*\omega$ is nondegenerate at $(\Sigma,0)\in
\mathcal K$, and so the Inverse Function Theorem shows $\chi$ is a
local diffeomorphism there.
\end{proof}
\begin{rem}It is likely that this computation can be pushed further to
show that $\chi$ is a local diffeomorphism away from $U=0$.  In
order to do this, we must have a good model of varying both $\Sigma$
and $U$ away from $U=0$ (and a direct verification, using
connexions, of a generalisation of Shimura's result).
\end{rem}
 We say a representation
$\rho\!: \pi_1 S\to SU(2,1)$ is \emph{geometrically finite} if
for $\Omega\subset
\partial \mathbb{CH}^2$ the domain of discontinuity of the action,
the quotient of $(\mathbb{CH}^2\cup\Omega)/\rho(\pi_1S)$ is a compact
manifold with boundary. (This definition should be modified in situations in which cusps are allowed.)

As in Parker-Platis \cite{parker-platis06}, we say a representation $\rho$ is
\emph{complex hyperbolic quasi-Fuchsian} if it is discrete,
faithful, geometrically finite and totally loxodromic. Recall that a
representation $\rho$ into $SU(2,1)$ is called \emph{totally
loxodromic} if every $\rho(\gamma)$ is loxodromic for $\gamma$ not
the identity. $\rho(\gamma)$ is \emph{loxodromic} if it fixes
exactly two points in $\partial\CH^2$.

\begin{thm} \label{main-thm}
There is a neighborhood $\mathcal N$ of the zero section $\{U=0\}$
of the total space of the vector bundle over Teichm\"uller space
whose fibre is the space of holomorphic cubic differentials so that
$$\chi|_\mathcal N : \mathcal N \to {\rm
Hom}(\pi_1S,SU(2,1))/SU(2,1)$$ is a diffeomorphism onto its image.
For each of these representations, there is a fundamental domain in
$\mathbb{CH}^2$, an equivariant minimal Lagrangian surface, and an
equivariant
submersion of $\CH^2$ onto the surface.
Each of these representations is complex hyperbolic quasi-Fuchsian.
\end{thm}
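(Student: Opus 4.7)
The plan is to assemble Theorem \ref{exist-sol}, Theorem \ref{thm:Legframe}, Theorem \ref{local-diff}, Corollary \ref{fund-domain} and Propositions \ref{unique-sol}, \ref{proper} into one coherent statement, then verify the four defining conditions of complex hyperbolic quasi-Fuchsian. \emph{Diffeomorphism onto image.} Theorem \ref{local-diff} already gives local diffeomorphism at each point of the zero section $\{U=0\}$. To upgrade this to a diffeomorphism onto image, I would shrink $\mathcal N$ to a neighborhood on which $\chi$ is also injective. Injectivity follows from Theorem \ref{thm:Legframe}: the conjugacy class of $\rho=\chi(\Sigma,U)$ determines the $\rho$-equivariant minimal Lagrangian immersion up to an ambient isometry of $\CH^2$, and the induced metric $e^u h$ together with the holomorphic cubic differential $U$ on the quotient surface recover $(\Sigma,U)$ as marked data; the uniqueness clause of Proposition \ref{unique-sol} forbids two distinct small pairs from producing the same representation.

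\emph{Fundamental domain, embedded surface, and equivariant submersion.} For $\|U\|_h<\kappa$ with $\kappa$ as in the proposition preceding Corollary \ref{fund-domain}, that corollary produces the fundamental domain. Theorem \ref{thm:Legframe} supplies the $\rho$-equivariant minimal Lagrangian map $[f]\!:\tilde\Sigma \to \CH^2$, which is actually an embedding because $\Phi$ restricted to $\tilde\Sigma\times\{0\}$ is injective. Setting $\caD := [f(\tilde\Sigma)]$, the equivariant submersion $\CH^2 \to \caD$ is the composition of $\Phi^{-1}$ with the projection $\tilde\Sigma\times\mathcal D \to \tilde\Sigma$ followed by $[f]$; equivariance is automatic because $\Phi$ is built from the $\rho$-equivariant Legendrian frame $F$ and the $\pi_1 S$-action on $\tilde\Sigma\times\mathcal D$ is the deck action on the first factor.

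\emph{Complex hyperbolic quasi-Fuchsian.} Discreteness and faithfulness follow from the locally finite fundamental domain of Corollary \ref{fund-domain} together with freeness of the $\pi_1 S$-action on $\tilde\Sigma$ as the deck group. For geometric finiteness, the injective boundary extension $\bar\Phi\!:\tilde\Sigma\times\bar{\mathcal D}\to\overline{\CH^2}$ of the proposition preceding Corollary \ref{fund-domain} identifies the quotient of the closure of a fundamental domain in $\tilde\Sigma\times\bar{\mathcal D}$ by $\pi_1 S$ with a compact manifold with boundary that exhausts $(\CH^2\cup\Omega)/\rho(\pi_1 S)$, where $\Omega\subset\partial\CH^2$ is the domain of discontinuity. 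For totally loxodromic, any non-identity $\gamma\in\pi_1 S$ acts freely on $\CH^2$ via $\rho(\gamma)$, hence $\rho(\gamma)$ is not elliptic; its restriction to $\caD\simeq\RH^2$ is a hyperbolic isometry with two distinct fixed points on $\partial\caD$, and these sit inside $\partial\CH^2$ via $\bar\Phi$, so having at least two boundary fixed points rules out parabolicity and forces $\rho(\gamma)$ to be loxodromic.

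\emph{Main obstacle.} The subtle step is selecting $\mathcal N$ small enough to simultaneously fulfil the uniform bound $\|U\|_h<\kappa$ of Section 6, the smallness of $u$ required by Proposition \ref{unique-sol}, and the local-to-global injectivity of $\chi$. All three are satisfied on a neighborhood of each point of the zero section, but Teichm\"uller space is noncompact, so one must patch consistently using the fibrewise smoothness furnished by Theorem \ref{local-diff}. A secondary delicate point is verifying that $(\CH^2\cup\Omega)/\rho(\pi_1 S)$ is genuinely a compact manifold with boundary, which requires that the $\bar\Phi$-image of $\tilde\Sigma\times\partial\bar{\mathcal D}$ lies in $\Omega$ and meets $\partial\CH^2$ cleanly; this is where the injectivity of $\bar\Phi$ on the boundary, rather than merely in the interior, does essential work.
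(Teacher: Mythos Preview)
Your proposal follows the paper's broad outline, but there are two genuine gaps.

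\textbf{Injectivity of $\chi$.} Your argument invokes Theorem \ref{thm:Legframe} to claim that the conjugacy class of $\rho$ determines the equivariant minimal Lagrangian immersion up to isometry. Theorem \ref{thm:Legframe} does not say this: it says the data $(\Sigma,e^uh,U)$ determine $\varphi$ up to isometry, and conversely that a given equivariant $\varphi$ determines such data. It does \emph{not} assert that a given $\rho$ admits only one equivariant minimal Lagrangian surface; indeed, the introduction explicitly leaves open the possibility of multiple such surfaces far from the $\R$-Fuchsian locus. So Proposition \ref{unique-sol} cannot be used to rule out two distinct pairs $(\Sigma,U)$ and $(\Sigma',U')$ yielding the same $\rho$ via two different minimal Lagrangian surfaces. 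The paper does not argue injectivity this way; it simply relies on the local diffeomorphism of Theorem \ref{local-diff} together with the fact that $\chi$ restricted to the zero section is already the standard embedding of Teichm\"uller space onto the $\R$-Fuchsian locus, which allows one to shrink $\mathcal N$ fibrewise.

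\textbf{Totally loxodromic.} Your argument for ruling out parabolics misreads the domain of $\bar\Phi$. The extension $\bar\Phi$ is defined on $\tilde\Sigma\times\bar{\mathcal D}$: it closes up the \emph{normal disc} $\mathcal D$, not the ideal boundary of $\tilde\Sigma$. So the two fixed points of a deck transformation $\gamma$ on $\partial\tilde\Sigma$ are not in the domain of $\bar\Phi$, and you have no established continuous (let alone injective) extension of $[f]$ to $\partial\tilde\Sigma$ that would place them as two distinct points of $\partial\CH^2$. Proposition \ref{proper} gives properness of $[f]$, but properness alone does not give a continuous boundary extension, much less an injective one. The paper handles this entirely differently, in Proposition \ref{tot-lox}: elliptics are ruled out by the infinite order of surface-group elements together with the local finiteness of $\mathcal F$ (Lemma \ref{limit-set}), and parabolics are ruled out by a horoball argument \`a la Kamiya--Parker, again using only Lemma \ref{limit-set}. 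This avoids any appeal to a boundary extension of $[f]$ over $\partial\tilde\Sigma$.

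The remaining parts of your proposal (fundamental domain, embedded surface, equivariant submersion, discreteness, faithfulness, geometric finiteness) match the paper's approach.
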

\begin{proof}
Restrict to cubic differentials $U$ so that $\sup \|U\|_h$ is small
enough; then Corollary \ref{fund-domain} and Theorem
\ref{local-diff} provide the bulk of the theorem.
All that remains is to show that the representations we produce are
complex hyperbolic quasi-Fuchsian.

The existence of the fundamental domain immediately implies the
representation is discrete and faithful. Geometric finiteness
follows from Corollary \ref{fund-domain}, in particular the fact
that $\bar\Phi$ is an  injective immersion of manifolds with
boundary.
We show $\rho$ is totally loxodromic below in Proposition \ref{tot-lox}.
\end{proof}

Let $\Gamma = \rho(\pi_1\Sigma)$ be the induced
discrete subgroup of $SU(2,1)$. Recall the limit set $\Lambda(\Gamma)$
is the subset of $\overline{\CH^2}$
defined by $$\Lambda(\Gamma) = \{ y = \lim_{i\to\infty}g_i(x) \,\,\big|\,\,
 x \in \overline{\CH^2},\, g_i\in\Gamma, \, g_i\neq g_j\mbox{ if }
i\neq j \}.$$ Our construction of the fundamental domain $\mathcal F$
shows the following lemma, whose proof follows immediately from Corollary
\ref{fund-domain}.
\begin{lem} \label{limit-set}
$\bar {\mathcal F} \cap \Lambda(\Gamma) = \emptyset$.
\end{lem}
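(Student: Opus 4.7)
The plan is to show that $\bar{\mathcal F}\cap\partial\CH^2$ is contained in the domain of discontinuity $\Omega\subset\partial\CH^2$ of $\Gamma$, from which the lemma follows because $\Lambda(\Gamma)\subset\partial\CH^2\setminus\Omega$.

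First I would reduce to the boundary. The existence of the fundamental domain $\mathcal F\subset\CH^2$ from Corollary \ref{fund-domain} implies $\Gamma$ acts properly discontinuously on $\CH^2$, so no orbit of a point in $\CH^2$ accumulates inside $\CH^2$; and $\Gamma$ preserves $\partial\CH^2$, so orbits of boundary points remain on the boundary. Hence $\Lambda(\Gamma)\subset\partial\CH^2$, and it suffices to prove $\bar{\mathcal F}\cap\partial\CH^2$ is disjoint from $\Lambda(\Gamma)$.

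Next I would describe $\bar{\mathcal F}\cap\partial\CH^2$ explicitly via $\bar\Phi$. Let $\tilde F\subset\tilde\Sigma$ be a lift of a compact fundamental domain for $\pi_1\Sigma$, so that $\mathcal F=\Phi(\tilde F\times\mathcal D)$. Continuity of $\bar\Phi$ on the compact set $\bar{\tilde F}\times\bar{\mathcal D}$ gives $\bar{\mathcal F}=\bar\Phi(\bar{\tilde F}\times\bar{\mathcal D})$. From the parameterisation \eqref{normal-plane}, the indefinite norm-squared of $(ia-b)f_1+(ia+b)f_2+f_3$ equals $2(a^2+b^2)-1$, which vanishes precisely on $\partial\mathcal D=\{a^2+b^2=\tfrac12\}$. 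Therefore
\[
\bar{\mathcal F}\cap\partial\CH^2 \;=\; \bar\Phi\bigl(\bar{\tilde F}\times\partial\mathcal D\bigr) \;\subset\; Y, \qquad Y:=\bar\Phi\bigl(\tilde\Sigma\times\partial\mathcal D\bigr).
\]

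Finally I would show that $\Gamma$ acts properly discontinuously on the open $\Gamma$-invariant set $Y\subset\partial\CH^2$, which places $Y\subset\Omega$ and thus $Y\cap\Lambda(\Gamma)=\emptyset$. Openness comes from the preceding proposition: $\bar\Phi$ is an injective immersion of $4$-manifolds with boundary, so its restriction $\tilde\Sigma\times\partial\mathcal D\to\partial\CH^2$ is an injective immersion between $3$-manifolds, hence an open embedding by invariance of domain. The equivariance $\rho(\gamma)\bar\Phi(\tilde p,v)=\bar\Phi(\gamma\tilde p,v)$ makes $Y$ a $\Gamma$-invariant set on which the $\Gamma$-action is topologically conjugate to the first-factor action of $\pi_1\Sigma$ on $\tilde\Sigma\times\partial\mathcal D$. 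Since the deck action of $\pi_1\Sigma$ on $\tilde\Sigma$ is properly discontinuous with compact quotient, the first-factor action on $\tilde\Sigma\times\partial\mathcal D$ is properly discontinuous, and hence so is the $\Gamma$-action on $Y$. Combining, $\bar{\mathcal F}\cap\Lambda(\Gamma)\subset Y\cap\Lambda(\Gamma)=\emptyset$.

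The main obstacle I expect is the last step: transferring proper discontinuity from $\tilde\Sigma$ to the subspace $Y\subset\partial\CH^2$ with its induced topology. This reduces to checking that compact subsets $K\subset Y$ pull back under $\bar\Phi^{-1}$ to compact subsets of $\tilde\Sigma\times\partial\mathcal D$ whose first-factor projection is compact in $\tilde\Sigma$. This follows cleanly from the open embedding property of $\bar\Phi|_{\tilde\Sigma\times\partial\mathcal D}$ together with the compactness of the disc factor $\partial\mathcal D$.
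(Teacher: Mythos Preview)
Your argument is correct and is essentially a fleshed-out version of the paper's one-line proof. The paper simply states that the lemma ``follows immediately from Corollary~\ref{fund-domain}'', i.e.\ from the fact that only finitely many $\gamma\in\pi_1\Sigma$ satisfy $\gamma\bar{\mathcal F}\cap\bar{\mathcal F}\neq\emptyset$; this in turn rests on the injectivity of $\bar\Phi$ on $\tilde\Sigma\times\bar{\mathcal D}$. You unpack the same mechanism more explicitly: you restrict to $\partial\CH^2$, use that $\bar\Phi|_{\tilde\Sigma\times\partial\mathcal D}$ is an open embedding into $\partial\CH^2$, and conclude that the $\Gamma$-action on $Y=\bar\Phi(\tilde\Sigma\times\partial\mathcal D)$ is conjugate to the deck action, hence properly discontinuous, so $Y\subset\Omega$ and $\bar{\mathcal F}\cap\partial\CH^2\subset Y$ misses $\Lambda(\Gamma)$. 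The only difference is packaging: the paper invokes the finite-intersection property of Corollary~\ref{fund-domain} directly, while you phrase the conclusion through the domain of discontinuity. Your version has the advantage of making transparent why the finite-intersection property actually forces no orbit to accumulate in $\bar{\mathcal F}$ (a step the paper leaves implicit), at the cost of a bit more setup.
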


\begin{prop} \label{tot-lox}
The representation $\rho$ is totally loxodromic.
\end{prop}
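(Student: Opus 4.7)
The plan is to handle the elliptic and parabolic cases separately and conclude by the trichotomy for $SU(2,1)$ isometries. Elliptic elements are excluded at once: $\rho$ is faithful, $\pi_1 S$ is torsion-free, and every elliptic element of the discrete subgroup $\Gamma=\rho(\pi_1 S)\le SU(2,1)$ must have finite order, since the isotropy group at any point of $\CH^2$ is compact. So no non-trivial $\rho(\gamma)$ is elliptic.

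The real work is to exclude parabolic elements. I would exploit the $\rho$-invariant minimal Lagrangian embedding $M=\varphi(\caD)\subset \CH^2$ from Theorem~\ref{main-thm}. Since $M/\Gamma\cong\Sigma$ is a compact surface, $\Gamma$ acts on $M$ freely, properly discontinuously and cocompactly by isometries of the induced metric $g=e^u h$, which is complete (its quotient is compact) and, by Remark~\ref{small-curvature}, has sectional curvature $\kappa_g\le -2<0$. Hence $M$ is a Hadamard manifold of strictly negative curvature carrying a free, cocompact isometric action of $\Gamma$, and standard theory forces each $\gamma\in\Gamma\setminus\{e\}$ to act on $M$ as a hyperbolic isometry, preserving a unique complete geodesic axis $\tilde c\subset M$ and translating it by some $\ell=\ell_M(\gamma)>0$. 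Parameterise $\tilde c$ by $M$-arclength so that $\gamma\cdot\tilde c(t)=\tilde c(t+\ell)$, and assume for contradiction that $\rho(\gamma)$ is parabolic, with unique fixed point $p\in\partial\CH^2$.

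Now consider the ambient displacement
\[
\delta(t)=d_{\CH^2}\bigl(\tilde c(t),\gamma\cdot\tilde c(t)\bigr).
\]
Because $\gamma$ acts as an isometry of $\CH^2$, $\delta$ is continuous and $\ell$-periodic; because a parabolic has no fixed point in $\CH^2$, $\delta$ is strictly positive, hence bounded below by $\delta_0:=\min_{[0,\ell]}\delta>0$. On the other hand, the iterates $\rho(\gamma)^n$ converge uniformly on compact subsets of $\overline{\CH^2}\setminus\{p\}$ to the constant map at $p$; applied to the compact arc $\tilde c([0,\ell])$ this forces $\tilde c(t)\to p$ in $\overline{\CH^2}$ as $t\to\infty$. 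Combined with the classical fact that the displacement of a parabolic isometry of $\CH^2$ decays to $0$ as its argument approaches the fixed point (immediate in the Siegel model, where parabolics are Heisenberg translations and the $\CH^2$-metric shrinks on horospheres approaching $p$), this yields $\delta(t)\to 0$, contradicting $\delta\ge\delta_0>0$. So $\rho(\gamma)$ cannot be parabolic, and the trichotomy forces it to be loxodromic. The most delicate point is precisely this periodicity-versus-decay tension, hinging on the two standard dynamical facts about parabolic elements of $SU(2,1)$ used above; once they are invoked, the rest of the argument is essentially structural.
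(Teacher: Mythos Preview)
Your elliptic case is clean and correct --- faithfulness plus torsion-freeness of $\pi_1 S$ plus compactness of point-stabilisers in $SU(2,1)$ does the job, and this is if anything tidier than the paper's argument via the fundamental domain.

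The parabolic case, however, has a genuine gap. The assertion that ``the displacement of a parabolic isometry of $\CH^2$ decays to $0$ as its argument approaches the fixed point'' is false without a constraint on \emph{how} the argument approaches $p$. Displacement decays only when the point moves into deeper and deeper horoballs at $p$; along a fixed horosphere it need not decay at all. Indeed, on the $\gamma$-orbit of any single point the displacement is exactly constant, since $d_{\CH^2}(\gamma^n x,\gamma^{n+1}x)=d_{\CH^2}(x,\gamma x)$. Your own periodicity observation already encodes this: $\delta(t+\ell)=\delta(t)$ forces $\delta$ to be bounded away from $0$ \emph{and} bounded above, so it cannot tend to $0$. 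The reason the decay argument does not apply is that the Busemann function $\beta_p$ is $\gamma$-invariant, hence $\beta_p\circ\tilde c$ is $\ell$-periodic and bounded; thus $\tilde c$ stays in a bounded band of horospheres and never penetrates the deep horoballs where displacement shrinks. (A concrete model: in $\RH^2\subset\CH^2$ with $\gamma(z)=z+1$, the points $n+i\to\infty=p$ while $d(n+i,(n+1)+i)$ is constant.) So the ``periodicity-versus-decay tension'' you identify is illusory: both sides of it are really the same periodicity statement.

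The paper instead exploits the locally finite fundamental domain $\mathcal F$ and the fact (Lemma~\ref{limit-set}) that $\bar{\mathcal F}\cap\Lambda(\Gamma)=\emptyset$. For a putative parabolic fixed point $p$ one uses the Kamiya--Parker horoball system $\{\mathcal B_\ell\}$ at $p$: choosing $z_i\in\mathcal B_i$ with $z_i\to p$ and translates $g_i z_i\in\bar{\mathcal F}$, a compactness-and-dichotomy argument (either the $g_i\mathcal B_1$ are eventually disjoint, forcing their Euclidean diameters to shrink, or they eventually coincide) shows the limit of $g_i z_i$ is a limit point of $\Gamma$ lying in $\bar{\mathcal F}$, a contradiction. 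If you want to salvage a surface-based argument, one route is to note that cocompactness of the $\Gamma$-action on $M$ gives a point $x\in M$ in every $\Gamma$-translate of any horoball $\mathcal B_\ell$, and then run a similar accumulation argument against $\bar{\mathcal F}$; but the bare displacement-decay claim as stated does not suffice.
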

\begin{proof}
This is a standard fact, once we have our locally finite fundamental domain $\mathcal F$
(see e.g.\ \cite{parker-platis06}), but we provide a proof for the reader's
convenience.  We would like to thank Bill Goldman and especially John Parker for
explaining the essential ideas here to us.

We need only rule out elliptic and parabolic elements of $\Gamma\setminus\{1\}$.

Ruling out elliptic elements is straightforward. If $g\in\Gamma\setminus\{1\}$
fixes a point $p\in\CH^2$, then $p$ must lie in a translate $h\bar{\mathcal F}$
for some $h\in\Gamma$. But since $g$ has infinite order (as $\Gamma$ is a surface group),
that would imply that all $g^np \in h\bar{\mathcal F}$, which violates Lemma
\ref{limit-set}.

The remaining case is to rule out parabolic fixed points. Let $p$ be a fixed point
of a nontrivial parabolic element of $\Gamma$. Then $p\in\partial\CH^2$ and $p\in
\Lambda(\Gamma)$. There are analogues of the classical horoball construction, due to
Kamiya and
Parker \cite{kamiya83,kamiya91,parker97,kamiya-parker08}. Let $\Gamma_p$ denote the isotropy group of $p$.

The (modified) horoballs are
open sets $\mathcal B_\ell\subset \CH^2$ for $\ell\ge0$ satisfying
\begin{enumerate}
\item $p\in \overline {\mathcal B_\ell}$ for all $\ell\ge0$.
\item $\overline{\mathcal B_\ell} \subset \mathcal B_k \cup \{p\}$ if $\ell>k$.
\item $\cap_{\ell\ge0} \overline{\mathcal B_\ell} = \{p\}$.
\item For any $\ell\ge0$, $g\in\Gamma$, $g\mathcal B_\ell = \mathcal B_\ell$ if and only if $g\in\Gamma_p$.
\label{para-act-horo}
\item For any $\ell\ge0$, $g\in\Gamma$, $\overline{\mathcal B_\ell} \cap g\overline{\mathcal B_\ell} = \emptyset$
if and only if $g\notin \Gamma_p$.
\label{permute-horo}
\end{enumerate}

Now for $i=1,2,\dots$, choose $z_i \in \mathcal B_i$ so that $z_i\to p$. Then there are elements $g_i\in \Gamma$
so that $g_iz_i\in \bar{\mathcal F}$. By compactness, upon passing to a subsequence, we may
assume $g_iz_i\to q\in \bar {\mathcal F}$.  Now by properties (\ref{para-act-horo}) and
(\ref{permute-horo}), we have the following two cases

\noindent {\bf Case 1:} Upon passing to a subsequence, $\{g_i\mathcal B_1\}$ are disjoint.
In this case, we may assume that the Euclidean volume (as measured in $\re^4 = \co^2\supset\CH^2$)
of $g_i\mathcal B_1$ goes to zero. This implies that the Euclidean diameter
of $g_i\mathcal B_1$ also goes to zero. Therefore,
$g_iz_i\to q$ implies $g_ip\to q$. Thus $q$ is a limit
point of $\Gamma$, which contradicts Lemma \ref{limit-set}.  (The assertion
about the relationship between the Euclidean volume and diameter is valid
for all sufficiently small domains in $\CH^2\subset\C^2$, and may be checked
infinitesimally by calculating the Jacobian matrix of the action of a general element of $SU(2,1)$
in inhomogeneous projective coordinates.)

\noindent{\bf Case 2:} Upon passing to a subsequence, all $g_i\mathcal B_1 = g_1\mathcal B_1$.
In this case, $g_1^{-1}g_i\in \Gamma_p$, and so $g_1^{-1}g_i\mathcal B_i = \mathcal B_i$.
Therefore, $g_iz_i \in g_1\mathcal B_i$, and taking $i\to\infty$ shows that $q=g_1p$. But $g_1p$ is a
limit point of $\Gamma$, which again contradicts Lemma \ref{limit-set}.
\end{proof}

\def\cprime{$'$}


\end{document}